\newif\ifspringer
\newif\ifsharelatex
\providecommand*{\input@path}{}
\g@addto@macro\input@path{{./}{include/}{../include/}}
  \journalname{Mathematical Programming Computation}
  \title{Partial hyperplane activation for generalized intersection cuts
  }
\author{Aleksandr M. Kazachkov \and
    Selvaprabu Nadarajah \and
    Egon Balas \and
    Fran\c{c}ois Margot %
  }
  \institute{E. Balas \and A. Kazachkov \and F. Margot \at
                Tepper School of Business,
                Carnegie Mellon University \\
                Pittsburgh, PA 15213, USA \\
                \email{akazachk@alumni.cmu.edu}           
             \and
             E. Balas \at
               \email{eb17@andrew.cmu.edu}
             \and
             F. Margot \at
               \email{fmargot11@gmail.com} %
             \and
             S. Nadarajah \at
                College of Business Administration,
                University of Illinois at Chicago \\
                Chicago, IL 60608, USA \\
                \email{selvan@uic.edu}
  }
  \title{Partial hyperplane activation for generalized intersection cuts
  }
  \author[1]{Aleksandr M. Kazachkov\thanks{\href{mailto:akazachk@cmu.edu}{\nolinkurl{akazachk@alumni.cmu.edu}}}}
  \author[2]{Selvaprabu Nadarajah}
  \author[1]{Egon Balas}
  \author[1]{\authorcr Fran\c{c}ois Margot}
  \affil[1]{Tepper School of Business,
    Carnegie Mellon University,
    Pittsburgh, PA}
  \affil[2]{College of Business Administration,
    University of Illinois at Chicago,
    Chicago, IL}
\def\biblio{\bibliographystyle{named}\bibliography{{\mainDir/\bibDirName/akazachk}}}
 \def\biblio{\bibliographystyle{named}\bibliography{\mainDir/\bibDirName/akazachk}}  
\def\zX{0}
\def\zY{0}
\newcommand{\xangle}{0}
\newcommand{\yangle}{90}
\newcommand{\zangle}{225}
\newcommand{\xlength}{1}
\newcommand{\ylength}{1}
\newcommand{\zlength}{1}
\pgfmathsetmacro{\xx}{\xlength*cos(\xangle)}
\pgfmathsetmacro{\xy}{\xlength*sin(\xangle)}
\pgfmathsetmacro{\yx}{\ylength*cos(\yangle)}
\pgfmathsetmacro{\yy}{\ylength*sin(\yangle)}
\pgfmathsetmacro{\zx}{\zlength*cos(\zangle)}
\pgfmathsetmacro{\zy}{\zlength*sin(\zangle)}
\def\zX{\zx}
\def\zY{\zy}
\algrenewcommand\ALG@beginalgorithmic{\small}
\date{}
\begin{document}	

\def\biblio{}
\ifspringer
\renewcommand\subclassname{{\bfseries Mathematics Subject Classification}\enspace}
\fi
\maketitle
\begin{abstract}
The generalized intersection cut (GIC) paradigm is a recent framework for generating cutting planes in mixed integer programming with attractive theoretical properties. We investigate this computationally unexplored paradigm and observe that a key hyperplane activation procedure embedded in it is not computationally viable. To overcome this issue, we develop a novel replacement to this procedure called partial hyperplane activation (PHA), introduce a variant of PHA based on a notion of hyperplane tilting, and prove the validity of both algorithms. We propose several implementation strategies and parameter choices for our PHA algorithms and provide supporting theoretical results. We computationally evaluate these ideas in the COIN-OR framework on MIPLIB instances. Our findings shed light on the the strengths of the PHA approach as well as suggest properties related to strong cuts that can be targeted in the future.
\ifspringer
  \keywords{Mixed-integer linear programming \and Cutting planes \and Intersection cuts}
  \subclass{90C11}
\fi
\end{abstract}

\section{Introduction} 

An important aspect of modern integer optimization solvers is the use of 
cutting planes to strengthen a given formulation~\cite{AchWun13}.
Finding methods for generating stronger general-purpose cutting planes 
has been an active topic of research in the past decade.
As part of this trend,
\citet{BalMar13} introduced the \emph{generalized intersection cut} (GIC) paradigm with the motivation of finding stronger cutting planes that also possess favorable numerical properties, such as avoiding numerical inaccuracies that arise in traditional cutting plane approaches~\cite{ZanFisBal11}.
Despite offering some theoretical advantages to other modern cutting planes,
GICs have remained unexplored computationally.
In this paper, we observe that generating GICs as outlined in \cite{BalMar13}
is computationally intractable
due to the exponential size of the linear program used to generate cuts.
We offer a solution by extending the notion of a GIC and devising new algorithms to generate such GICs that scale well with the size of the input data.
We prove the validity of our algorithms,
provide theoretical results that guide our implementation choices,
and perform the first computational investigation with GICs. 
Our investigation identifies properties related to strong cuts that can be targeted in future approaches within the paradigm.

Let 
  $P := \{x \in \R^n \suchthat Ax \ge b,\ x \ge 0\}$
and
  $P_I := \{x \in P \suchthat x_j \in \Z \text{ for all $j \in I$}\}$
for a set $I \subseteq \{1,\ldots,n\}$,
where all data is rational.
Let $C$ be a relaxation of $P$ defined by a subset of the inequalities defining $P$.
\citet{BalMar13} produce GICs from a collection of intersection points and rays obtained by intersecting the edges of $C$ with the boundary of a convex set $S$ containing no points from $P_I$ in its interior.
The simplest GICs are the \emph{standard intersection cuts} (SICs)~\cite{Balas71},
which use as $C$ a simple polyhedral cone $\optcone$ with apex at a vertex $\lpopt$ of $P$.
GICs generalize SICs by using a tighter relaxation of $P$,
by \emph{activating} additional hyperplanes of $P$ that are not included in the description of $\optcone$.
However, hyperplane activation poses two computational issues:
first, 
maintaining a description of $C$ becomes challenging,
and second,
the number of points and rays can quickly grow too large for any practical use.

This paper introduces a new method, called \emph{partial hyperplane activation} (PHA), 
that addresses the issues with the aforementioned \emph{full} hyperplane activation proposed in \cite{BalMar13}.
The key insight underlying PHA is
that we can forgo a complete description of the polyhedron $C$ by always activating hyperplanes on the initial cone $\optcone$, instead of on an iteratively refined relaxation.
We show the PHA approach is not only valid, but also generates a collection of intersection points and rays that grows quadratically in size with the number of hyperplane activations, in contrast to the exponential growth exhibited by the full activation procedure.

Activating a hyperplane on $\optcone$ creates new vertices lying at most one edge of $P$ away from $\lpopt$.
Consequently, intersection points are obtained from edges originating at $\lpopt$ or these distance $1$ vertices.
Higher-distance PHA methods, with accompanying higher computational cost, can be easily defined by incorporating hyperplane intersections with edges a larger distance from $\lpopt$,
eventually recovering the full activation procedure. In other words, while we focus in this paper on the details of a distance 1 PHA procedure, its underlying ideas extend to a hierarchy of PHA procedures delivering progressively "stronger" sets of intersection points.

An important observation related to the effectiveness of PHA is that weak intersection points are created whenever a hyperplane being activated intersects rays of $\optcone$ that do not intersect the boundary of $S$.
To mitigate this issue, we introduce a class of \emph{tilted} hyperplanes that can be used to avoid such rays. 
We also show that tilted hyperplanes offer more control over the number of intersection points generated with essentially no additional overhead, but they require a subtle condition to ensure that these points lead to cuts \emph{valid} for $P_I$, i.e., to inequalities that do not cut off any points of $P_I$. 
We use this idea to design a modified PHA algorithm that creates a point-ray collection that grows linearly with the number of hyperplane activations but quadratically in the number of rays of $\optcone$ being cut. 
The notion of tilting may also have independent merit outside of the GIC paradigm whenever a $\mathcal{V}$-polyhedral partial description could be useful, i.e., describing a polyhedron by some of its vertices and edges (see, e.g., \cite{Ziegler95}).

Implementing PHA or its variant with tilting requires several algorithmic design choices, such as the decision of which particular hyperplanes to activate and which objective functions to use when optimizing over the derived collection of intersection points and rays. Our theoretical contributions provide geometric and structural insights into making some of these choices.
Our experiments test the strength of GICs with respect to root node gap closed, a commonly used measure of cut strength, based on three hyperplane activation rules and four types of objective directions.

The first hyperplane activation rule we consider is based on the full activation method, which sequentially activates hyperplanes by pivoting to neighbors of $\lpopt$.
The second rule targets points that lie deep relative to the SIC.
The third rule is motivated by a pursuit of a certain class of intersection points called \emph{final}, relating to facet-defining inequalities for the \emph{$\Sk$-closure}, which is a relaxation of the split closure obtained by refining $P$ through the addition of one simple split disjunction on a variable $x_k$.

We then discuss the objective directions evaluated in our implementation, which contribute to a better understanding of the process by which a collection of strong cuts can be generated.
The first two sets of objective functions, as with the hyperplane rules, are motivated by finding cuts that improve over the SIC, as well as generalizing the usual objective of maximizing violation with respect to $\lpopt$.
We then give a result motivating the next set of directions, by showing that the optimal objective value over the $\Sk$-closure can be obtained using only final intersection points and describing how to attain this same value using cuts.
The fourth objective function type capitalizes on information that can be used from the typical empirical setup in which multiple split disjunctions are applied in parallel.

Finally, we present the first computational results for any algorithm within the GIC paradigm,
implemented in the open source COIN-OR framework~\cite{COIN-OR} and using a set of benchmark instances from MIPLIB~\cite{MIPLIB}. We find that our PHA approach creates a manageable number of intersection points from which we can generate GICs that effectively generalize SICs:
one round of GICs generated from our methodology improves the \emph{integrality gap} closed by one round of SICs 
on 75\% of the instances tested, closing on average an additional 5.1\% of the gap over SICs, on instances for which either SICs or GICs close any gap.
Though our procedure is intended to be nonrecursive, we do test a second round of GICs, the outcome of which is some indication of the lack of tailing off by GICs as compared to SICs.
Underlying these results is a detailed analysis of the various hyperplane activation and objective function choices for the linear program used to generate GICs. 
These experiments build an understanding of structural properties of strong GICs. 
In particular, we find that the objective functions inspired by final intersection points frequently find the strongest cuts from a given point-ray collection compared to the other objectives tested. 
However, such final intersection points appear infrequently in the point-ray collections generated by PHA, yielding a concrete direction for future work: to target final intersection points more directly.

Our research on GICs adds to the growing literature that generalizes and extends
cutting plane methods based on intersection cuts and Gomory cuts.
SICs are equivalent to Gomory cuts~\cite{Gomory69,GomJoh72a,GomJoh72b},
a relationship surveyed in \citet{ConCorZam11},
and are among the simplest, yet most effective, cuts used in practice.
As SICs are generated using information from only one row of the simplex tableau,
GICs are a natural generalization by using information from multiple rows.
Using such information has been studied extensively in the past decade
\cite{AndLouWeiWos07,BasBonCorMar11a,DasGun13,DasGunVie14,DasGunMol15,DasGunMor16,DeyLodTraWol14,Espinoza10,LouPoiSal15}.
The focus of these approaches is to obtain stronger cuts by using a better
cut-generating set obtained from the optimal solution to the continuous relaxation.
GICs instead attempt to strengthen SICs using other rows but the same cut-generating set.
The collection of intersection points and rays used to derive GICs can also
be seen as defining a linear system for verifying the validity of cutting planes~\cite{DeyPok14}.

Broadly, GICs fall into the class of algorithms using the 
classical theoretical approach of polarity to generate cuts~\cite{Balas79}.
This includes similarities to local or target cuts~\cite{BucLieOsw08}, which use the polar of a set of points and rays to generate cuts,
and to the procedures in \cite{PerBal01,LouPoiSal15}, which use row generation to individually certify the validity of each generated cut.
Though utilizing some of the same theoretical tools, GICs derive points and rays from a fundamentally different perspective and in a way that immediately guarantees cut validity.

This paper is organized as follows.
In Section~\ref{sec:FHA}, we revisit full hyperplane activation from \cite{BalMar13}
and demonstrate the impediment to a practical GIC algorithm inherent in that approach.
Section~\ref{sec:PHA} then gives the main contribution of this paper, the PHA scheme.
Section~\ref{sec:tilting} shows the idea of tilting and gives an additional algorithm that can be used within the PHA procedure.
Implementation choices and supporting theory are discussed in Section~\ref{sec:theory}.
Lastly, we give the results of our computational experiments in Section~\ref{sec:computation}.
Additional supporting theoretical results are contained in the appendix.

\section{Full hyperplane activation}
\label{sec:FHA}

In this section, we show that the full hyperplane activation approach to generate GICs as
proposed by \citet{BalMar13} is impractical,
which provides the motivation for developing our new methodology in later sections.
Given objective vector $c \in \Q^n$, the optimization problem is
  \begin{align}
    \min \{c^\T x : x \in P_I\}. 
    \tag{IP}\label{IP}
  \end{align}
Let \eqref{LP} denote the \emph{continuous} or \emph{linear relaxation} of \eqref{IP},
obtained by removing any integrality restrictions on the variables:
  \begin{align}
    \min \{c^\T x : x \in P \}. \tag{LP}\label{LP}
  \end{align}
Let $\lpopt$ denote an optimal basic feasible solution to \eqref{LP}.
For simplicity, we assume throughout that $P$ is a full-dimensional pointed polyhedron,
but our results extend to the general case with minor modifications.

Let $S$ be a \emph{$P_I$-free convex set}, a closed convex set
such that its \emph{interior}, denoted $\interior{S}$, contains no points of $P_I$,
and suppose that $\lpopt \in \interior{S}$.
A commonly used such set is
formed from a \emph{simple} split disjunction $(x_k \le \floor{\lpopt_k}) \lor (x_k \ge \ceil{\lpopt_k})$ on a variable $x_k$, $k \in I$:
  \[
    \Sk := \{x: \floor{\lpopt_k} \le x_k \le \ceil{\lpopt_k}\}.
  \]
Let $\opt\NB := \NB(\lpopt)$ denote the set of nonbasic variables at $\lpopt$.
Let $\optcone := C(\opt\NB)$ be the polyhedral cone with apex at $\lpopt$ and defined by the $n$
hyperplanes corresponding to the nonbasic variables $\opt\NB$;
denote these hyperplanes by $\optconehplanes$.
Let $\optconerays$ be the rays of $\optcone$.

GICs generalize SICs, as the cone $\optcone$ is but one possible relaxation of $P$.
$\optcone$ has $n$ extreme rays that can be intersected with $\bd S$
to obtain a set of intersection points $\initpointset$
and a set of rays $\initrayset$ of $\optcone$ that do not intersect $\bd S$, 
so that
  \begin{align*}
    &\initrayset :=  \{r \in \optconerays: r \cap \bd S = \emptyset\},\\
    &\initpointset :=  \{p^r: p^r := r \cap \bd S,\ r \in \optconerays \setminus \initrayset\}.
  \end{align*}
The intersection points in $\initpointset$ and rays in $\initrayset$ uniquely define the 
SIC, $\initcut$, obtained from $\optcone$ and $S$~\cite{Balas79}.

Letting $\hplaneset$ denote the set of hyperplanes of $P$,
$\optcone$ can be replaced by a tighter relaxation of $P$, denoted $C$,
obtained by activating
a subset of hyperplanes from $\hplaneset \setminus \optconehplanes$, i.e.,
of those that define $P$ but not $\optcone$. 
Subsequently, edges of $C$ can be intersected with $\bd S$ to 
obtain a set of intersection points $\pointset$, 
and edges that do not intersect $\bd S$ yield a set of rays $\rayset$.
In contrast to the situation when using $\optcone$, the number of intersection 
points and rays obtained from $C$ intersected with $\bd S$ can be strictly greater than $n$,
meaning the point-ray collection
defines not just one cut, but a collection of valid cuts.

Theorem~4 in \cite{BalMar13} defines valid GICs as these cuts
and propose to generate them as
any $(\alpha, \beta) \in \R^n \times \R$ 
satisfying $\alpha^\T \lpopt < \beta$ and 
  \begin{align} \label{CutRegion}
    \begin{aligned}
      p^\T \alpha &\ge \beta &&\text{for all $p \in \pointset$} \\
      r^\T \alpha &\ge 0 &&\text{for all $r \in \rayset$}.
    \end{aligned}    
  \end{align}
For a fixed $\beta$, define $\AlphaSystem(\beta,\pointset,\rayset)$ 
as the vectors $\alpha$ feasible to the above system,
i.e., the coefficients for inequalities with lower-bound $\beta$ that are valid for $\pointset$ and $\rayset$.
It suffices to consider $\beta \in \{-1,0,1\}$ to obtain all possible cuts.

Unfortunately, 
the above method requires maintaining the $\mathcal{V}$-polyhedral description of $C$.
The size of this $\mathcal{V}$-polyhedral description,
as well as the number of rows of \eqref{CutRegion},
as shown in Proposition~\ref{prop:FHA-exponential},
can grow exponentially large in the number of hyperplanes defining $C$.
Together, these two issues make the full hyperplane activation approach unviable.
\begin{proposition}
\label{prop:FHA-exponential}
  Let $C$ be formed by adding $k_h$ hyperplanes to the description of $\optcone$,
  and let $(\pointset,\rayset)$ denote the point-ray collection obtained 
  from intersecting the edges of $C$ with $\bd S$.
  The cardinality of $\pointset$ can grow exponentially large in $k_h$.
\end{proposition}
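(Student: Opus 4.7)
The plan is to exhibit, for every $k_h$, an explicit instance in which $|\pointset| \ge 2^{k_h}$, from which the asserted exponential growth follows. The geometric idea is to choose the activated hyperplanes so that $C$ has the combinatorial structure of a cube crossed with a ray: exponentially many parallel unbounded edges, each crossing $\bd S$ at a distinct point.

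Concretely, set $n = k_h + 1$, put $\lpopt = (\tfrac{1}{2},\ldots,\tfrac{1}{2}) \in \R^n$, and let the $n$ nonbasic hyperplanes defining $\optcone$ be $\{x_i \le \tfrac{1}{2}\}_{i=1}^n$, so that $\optcone$'s extreme rays at $\lpopt$ are $-e_1,\ldots,-e_n$. For $P$ take the pointed polytope $[M,\tfrac{1}{2}] \times [0,\tfrac{1}{2}]^{n-1}$ for some $M < 0$, with any objective $c$ having $c_i < 0$ so that $\lpopt$ is the unique LP optimum. Now activate the $k_h$ hyperplanes $\{x_i \ge 0\}$ for $i = 2,\ldots,n$ (leaving the bound $x_1 \ge M$ inactive). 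The resulting
\[
  C \;=\; (-\infty,\tfrac{1}{2}] \times [0,\tfrac{1}{2}]^{n-1}
\]
has $2^{n-1}$ vertices $(\tfrac{1}{2},\epsilon_2,\ldots,\epsilon_n)$ with $\epsilon_i \in \{0,\tfrac{1}{2}\}$, and from each such vertex emanates exactly one unbounded edge of $C$, all parallel to $-e_1$.

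For the $P_I$-free set I would take the split $\Sk = \{x: 0 \le x_1 \le 1\}$ with $k = 1$ and $I \ni 1$: then $\lpopt \in \interior{\Sk}$, and $\interior{\Sk}$ contains no point of $P_I$ because $0 < x_1 < 1$ admits no integer value. Each unbounded edge, parametrized as $(\tfrac{1}{2}-t,\epsilon_2,\ldots,\epsilon_n)$ for $t \ge 0$, exits $\Sk$ transversally at $(0,\epsilon_2,\ldots,\epsilon_n)$, and these $2^{n-1}$ intersection points are pairwise distinct because the $\epsilon$-tuples are. Hence $|\pointset| \ge 2^{n-1} = 2^{k_h}$, which is the claimed exponential lower bound.

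The one place requiring care, and the part I expect to be the main obstacle, is ruling out that other edges of $C$ either spoil the count or fall outside the framework. The bounded edges of $C$ all lie in the hyperplane $\{x_1 = \tfrac{1}{2}\} \subset \interior{\Sk}$, so they do not meet $\bd \Sk$ and contribute nothing to $\pointset$; the unbounded edges are in bijection with the vertices of $C$ and are pairwise translates in directions orthogonal to $-e_1$, so their crossings of $\{x_1 = 0\}$ are genuinely distinct. With these two observations in hand, the exponential lower bound on $|\pointset|$ is immediate.
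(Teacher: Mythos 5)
Your construction is correct, and it proves the proposition by a genuinely different route than the paper. The paper's own proof is a local growth-rate argument: it observes that activating one halfspace that cuts roughly $n/2$ of the rays of $\optcone$ creates $O(n^2)$ new edges, and then asserts that the exponential bound ``follows inductively'' over the $k_h$ activations on the iteratively refined $C$ --- the compounding step is left implicit. You instead exhibit an explicit witness family: with $n = k_h+1$, activating the $k_h$ lower bounds $x_i \ge 0$ ($i \ge 2$) turns $\optcone$ into a prism $(-\infty,\tfrac12]\times[0,\tfrac12]^{n-1}$ whose $2^{k_h}$ vertices each carry one unbounded edge crossing the facet $x_1 = 0$ of $S$ at a distinct point, so $\card{\pointset} = 2^{k_h}$ exactly. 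This is more rigorous and self-contained than the paper's sketch, and your closing observations (bounded edges lie in $\{x_1=\tfrac12\}\subseteq \interior{S}$ and contribute nothing; the unbounded edges are parallel translates, so their crossings are distinct) are exactly the checks needed. Both arguments necessarily let $n$ grow with $k_h$ --- in fixed dimension the upper bound theorem caps the number of edges of $C$ polynomially in $k_h$ --- and you make this dependence explicit where the paper leaves it tacit. Two cosmetic points: taking $M<0$ puts $P$ outside the paper's standard form $P \subseteq \{x \ge 0\}$; since $x_1 \ge M$ is never activated, you can simply take $M=0$ (or translate), which leaves $C$ and the count unchanged. And your bound $2^{n-1}$ matches the paper's later claim of $O(n^{k_h})$ complexity only in order of magnitude, but that is all the proposition asserts.
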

\begin{proof}
  Suppose we start with $\optcone$ and consider the activation of a halfspace $H^+$.
  Let $\raysIntByH(H)$ denote the rays from $\optconerays$ that are intersected by $H$ before $\bd S$.
  For each ray $r \in \raysIntByH(H)$ that intersects $H$, 
  $n - 1 - \card{\raysIntByH(H)}$ new edges are created 
  (not counting the edge back to $\lpopt$ 
  or the edges between new vertices created by activating $H$).
  If $\card{\raysIntByH(H)} \approx {n/2}$, 
  the size of the new point-ray collection will be $O(n^2)$.
  The desired result follows inductively.
\end{proof}

\section{Partial hyperplane activation}
\label{sec:PHA}
In this section, we propose PHA, 
an approximate activation method that overcomes the exponential growth in the size of the point-ray collection when using full hyperplane activation.
Specifically, our algorithm generates at most $O(k_r^2 k_h)$ intersection points and rays,
where $k_r$ and $k_h$ are both parameters;
$k_r \le n$ is the maximum number of initial intersection points 
and rays that we remove via hyperplane activations
and $k_h$ is the number of activated hyperplanes.

Showing the validity of PHA requires a strict extension of the tools used in \cite{BalMar13} to prove the validity of full hyperplane activation.
We give this extension in Section~\ref{sec:proper} and employ it in Section~\ref{sec:RHA}, in which we describe a concrete variant of PHA called {\PHAOne} and prove its validity.
To facilitate reading, Table~\ref{tab:notation} gives a summary of some of the most frequently used notation that is used in the paper.
\begin{table}[ht!]
  \small
  \centering
  \caption{Summary of frequently used notation.}
  \label{tab:notation}
  \begin{tabular}{ll}
    \toprule
    Notation & Description \\
    \midrule
    $\hplaneset$ & Hyperplanes defining $P$ \\
    $H_h$, $H_h^+$, $H_h^-$ & $H_h = \{x: a_h^\T x = b_h\}$, $H_h^+ = \{x: a_h^\T x \ge b_h\}$, $H_h^- = \R^n \setminus H_h^+$ \\
    $\opt\NB$ & Set of nonbasic variables defining $\lpopt$ \\
    $\optcone$ & Simple cone defined by the nonbasic variables at $\lpopt$ \\
    $\optconerays$ & Rays of $\optcone$ \\
    $\opt\hplanes$ & Hyperplanes defining $\optcone$ \\
    $(\pointset,\rayset)$ & Set of points on $\bd S$ and rays not intersecting $\bd S$ \\
    $(\initpointset, \initrayset)$ & 
    	Initial point-ray collection from $\optcone \cap \bd{S}$ \\
    $\AlphaSystem(\bar\beta, \pointset, \rayset)$ & Feasible region of \eqref{CutLP} \\
    $\rayset_\pointset$ & $\{p - \lpopt : p \in \pointset\}$ \\
    $\pointcone(\pointset,\rayset)$ & $\lpopt + \cone(\rayset_\pointset \cup \rayset)$ \\
    $\cutpolyhedron(\pointset,\rayset)$ & $\conv(\pointset) + \cone(\rayset)$ \\
    $\hplanes(r)$& Hyperplanes defining a ray $r$ \\
    $\raysIntByH(H_h)$ & Rays from $\optconerays$ intersected by $H_h$ before $\bd{S}$ \\
    \bottomrule
  \end{tabular}
\end{table}

\subsection{Proper point-ray collections}
\label{sec:proper}
We define a point-ray collection as a pair $(\pointset,\rayset)$ of points and rays obtained from intersecting the polyhedron $C$ with $\bd{S}$,
where $C$ is a relaxation of $P$.
Let $\mathcal{K}$ denote the skeleton of the polyhedron $C$.
Let $\mathcal{K}'$ denote the connected component of $\mathcal{K} \cap \interior S$ that includes $\lpopt$
and $\mathcal{K}''$ denote the union of the other components of $\mathcal{K} \cap \interior S$.

\begin{definition} \label{defn:proper}
  The point-ray collection $(\pointset, \rayset)$
  is called \emph{proper} if $\alpha^\T x \ge \beta$ is valid for $P_I$ whenever
  $(\alpha,\beta)$ is feasible to \eqref{CutRegion} and $\alpha^\T v < \beta$ for some
    $
      v \in \mathcal{K}'.
    $
\end{definition}

With this definition,
Theorem~\ref{thm:validIneq} shows that full hyperplane activation creates a proper point-ray collection,
which follows easily from the proof of Theorem~4 in \cite{BalMar13}.

\begin{theorem}\label{thm:validIneq}
  The point-ray collection $(\pointset, \rayset)$ 
  obtained from intersecting all edges of $C$ with $\bd S$, 
  where $C$ is defined by a subset of the hyperplanes defining $P$, 
  is proper.
\end{theorem}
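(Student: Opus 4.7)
The plan is to establish properness by contradiction, essentially by observing that the proof of Theorem~4 in \cite{BalMar13} only uses that the point being separated lies in $\interior{S} \cap C$, a property shared by every $v \in \mathcal{K}'$ rather than only by $\lpopt$. Concretely, I would fix $(\alpha, \beta)$ feasible to \eqref{CutRegion} together with $v \in \mathcal{K}'$ satisfying $\alpha^\T v < \beta$, pick an arbitrary $x^\star \in P_I$, and aim to show $\alpha^\T x^\star \ge \beta$.

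The first step is a localization: since $C \supseteq P \supseteq P_I$ we have $x^\star \in C$, and because $S$ is $P_I$-free we have $x^\star \notin \interior{S}$. The segment $[v, x^\star]$ then lies in the convex polyhedron $C$, begins in $\interior{S}$, and ends outside it, so it meets $\bd{S}$ at some point $y$. The second step, which is the geometric heart of the argument, is to represent $y$ as a convex combination of points in $\pointset$ plus a conic combination of rays in $\rayset$. Combining this representation with the inequalities of \eqref{CutRegion}, namely $p^\T \alpha \ge \beta$ for $p \in \pointset$ and $r^\T \alpha \ge 0$ for $r \in \rayset$, then yields $\alpha^\T y \ge \beta$. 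To close the contradiction, suppose $\alpha^\T x^\star < \beta$; linearity of $\alpha^\T$ on $[v, x^\star]$ together with $\alpha^\T v < \beta$ would force $\alpha^\T y < \beta$, contradicting the bound just established.

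The main obstacle—and the only place where the argument of \cite{BalMar13} does nontrivial work—is the $\mathcal{V}$-decomposition of $y$ in terms of $\pointset$ and $\rayset$. This is where \emph{full} hyperplane activation is used critically: every edge of $C$ is intersected with $\bd{S}$ to produce an element of $\pointset$, and every unbounded edge missing $\bd{S}$ is added to $\rayset$, so the polyhedral slice $C \cap \bd{S}$ is generated by the relevant portions of $\pointset$ together with rays in $\rayset$ (handling as a subtlety the possible directions of $C$ lying entirely on $\bd{S}$, which do not arise along the specific segment from $v \in \interior{S}$ to $y \in \bd{S}$). The role of the weaker hypothesis $v \in \mathcal{K}'$ is exactly to ensure that $y$ lies in the component of $C \cap \bd{S}$ reached from the piece $\mathcal{K}'$ of the interior skeleton, so that the decomposition can be taken with respect to adjacent pieces of $\pointset$ and $\rayset$. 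Once this identification is made, the remaining bookkeeping on skeleton components and on the classification of edges of $C$ relative to $\bd{S}$ is precisely what is carried out in the proof of Theorem~4 of \cite{BalMar13}, and the theorem follows.
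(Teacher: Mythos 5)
There is a genuine gap at the step you yourself identify as the geometric heart of the argument: the claim that a point $y \in C \cap \bd{S}$ can be written as a convex combination of points in $\pointset$ plus a conic combination of rays in $\rayset$. This is false, even for a polyhedral split set and even for the specific $y$ arising as the first exit point of a segment leaving $\interior{S}$. Minimal instance: $C = \R^2_{\ge 0}$ with apex $\lpopt = (0,0)$ and $S = \{x : -1 \le x_1 - x_2 \le 1\}$. The two edges of $C$ cross $\bd{S}$ at $(1,0)$ and $(0,1)$, so $\pointset = \{(1,0),(0,1)\}$ and $\rayset = \emptyset$, and $\conv(\pointset) + \cone(\rayset)$ is just the segment joining $(1,0)$ and $(0,1)$. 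Yet $C \cap \bd{S}$ contains, e.g., $(2,1)$, which is not on that segment; indeed $(\alpha,\beta) = ((-1,-1),-1)$ is feasible to \eqref{CutRegion} while $\alpha^\T (2,1) = -3 < \beta$, which certifies $(2,1) \notin \conv(\pointset)+\cone(\rayset)$. The source of the failure is that slicing $C$ by a boundary hyperplane $H$ of $S$ creates recession directions of $C \cap H$ (here $(1,1)$) that are proper conic combinations of edge directions of $C$ crossing $H$ transversally; these are neither intersection points nor members of $\rayset$, so the $\mathcal{V}$-description of $C \cap \bd{S}$ is strictly larger than what $\pointset \cup \rayset$ generates. (This is also why the full activation procedure must carry the exponentially large collection of Proposition~\ref{prop:FHA-exponential} rather than just the slice of one hyperplane.)

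A structural way to see that your step~2 cannot be repaired as stated: it would establish $\alpha^\T y \ge \beta$ for every $y \in C \cap \bd{S}$ and every $(\alpha,\beta)$ feasible to \eqref{CutRegion}, \emph{without using the hypothesis} $\alpha^\T v < \beta$. But that hypothesis is essential --- the example above exhibits a feasible solution of \eqref{CutRegion} that is violated by a point of $C \cap \bd{S}$ precisely because it fails to cut off $\lpopt$ --- which is the entire reason Definition~\ref{defn:proper} carries the separation condition. The paper's own argument (see the proof of Lemma~\ref{lem:valid_inequalities}, which subsumes this theorem) takes the opposite route: rather than decomposing points of $C \cap \bd{S}$, it considers the violated region $F = \{x \in C : \alpha^\T x < \beta\}$, which contains the cut-off vertex, and follows edges of $C$ inside $\cl{F}$ starting from that vertex to exhibit either an intersection point or a ray of the collection that violates the inequality. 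That edge-following argument is where the condition $\alpha^\T v < \beta$ enters, and it is the ingredient your outline is missing. Your reduction to the segment $[v,x^\star]$ and the final linearity argument are fine; only the middle step needs to be replaced by an argument of this containment type.
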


Given a proper point-ray collection,
a GIC is defined as a facet of the convex hull of points and rays,
  $
    \cutpolyhedron(\pointset,\rayset) := \conv(\pointset) + \cone(\rayset),
  $
referred to by $\cutpolyhedron$ for short,
that cuts off a vertex of $\mathcal{K}'$.
\citet{BalMar13} show that these facets 
can be generated
by solving the following 
linear program formed from the given points and rays and optimized with respect to
a given objective direction $w \in \R^n$
and a fixed $\beta \in \{-1,0,1\}$:
\begin{align}\label{CutLP}\tag{PRLP} 
  \min\{w^\T \alpha : \alpha \in \AlphaSystem(\beta,\pointset,\rayset)\}.
\end{align}  

In fact, we show something stronger in Lemma~\ref{lem:valid_inequalities}:\ 
the cuts we obtain are not only valid for $P_I$, but also for $\conv(C \setminus \interior S)$.
This extends Theorem~\ref{thm:validIneq}
and will be invoked in proving the results in Section~\ref{sec:RHA}.
For simplicity, we assume that the vertex $v \in \mathcal{K}'$ being cut in Definition~\ref{defn:proper} is $\lpopt$ in the rest of the paper unless specified otherwise.

\begin{lemma} \label{lem:valid_inequalities}
  Let $(\pointset, \rayset)$ be the point-ray collection
  obtained from intersecting all edges of $C$ with $\bd S$.
  If $(\alpha,\beta)$ is feasible to $\eqref{CutRegion}$
  and $\alpha^\T \lpopt < \beta$, then
    \[ F := \{x \in C : \alpha^\T x < \beta\} \subseteq \interior S. \]
\end{lemma}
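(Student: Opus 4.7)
The plan is to argue the contrapositive: for every $x \in C$ with $x \notin \interior S$, we have $\alpha^\T x \ge \beta$. I would first establish the geometric inclusion
\[
  C \cap \bd S \;\subseteq\; \cutpolyhedron(\pointset, \rayset) = \conv(\pointset) + \cone(\rayset),
\]
which by the validity conditions \eqref{CutRegion} immediately gives $\alpha^\T y \ge \beta$ for every $y \in C \cap \bd S$ (any representation $y = \sum_i \lambda_i p_i + \sum_j \mu_j r_j$ with $\sum_i \lambda_i = 1$ yields $\alpha^\T y \ge \beta \sum_i \lambda_i + 0 = \beta$). Given this, if some $x \in C \setminus \interior S$ had $\alpha^\T x < \beta$, then since $\lpopt \in \interior S$ and $C$ is convex, the segment $[\lpopt, x] \subseteq C$ would cross $\bd S$ at a first point $y = (1-s)\lpopt + s x$ with $s \in (0,1]$. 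Linearity of $\alpha^\T$ together with $\alpha^\T \lpopt < \beta$ and the assumed $\alpha^\T x < \beta$ would then force $\alpha^\T y < \beta$, contradicting $y \in C \cap \bd S \subseteq \cutpolyhedron(\pointset,\rayset)$.

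The central step is therefore the inclusion $C \cap \bd S \subseteq \cutpolyhedron(\pointset, \rayset)$. Assuming $S$ is polyhedral with facet-defining hyperplanes $H_1, \ldots, H_m$ (as is the case for $S = \Sk$), we have $C \cap \bd S = \bigcup_i (C \cap H_i)$, and each $C \cap H_i$ is itself a polyhedron. Every vertex of $C \cap H_i$ is either a vertex of $C$ lying on $H_i$ or the transversal intersection of an edge of $C$ with $H_i$; by the hypothesis that $\pointset$ is formed from intersecting \emph{all} edges of $C$ with $\bd S$, each such vertex belongs to $\pointset$. Similarly, every extreme ray of $C \cap H_i$ is an extreme ray of $C$ lying in $H_i$, or arises from a two-face of $C$ meeting $H_i$; these directions are captured by $\rayset$. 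A Minkowski--Weyl decomposition of any $y \in C \cap H_i$ then expresses $y$ as a convex combination of points in $\pointset$ plus a conic combination of rays in $\rayset$, establishing $y \in \cutpolyhedron(\pointset, \rayset)$.

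The main obstacle is making the vertex/extreme-ray accounting above airtight in degenerate configurations—e.g., vertices of $C$ that happen to lie on $\bd S$, or entire edges or rays of $C$ that are contained in $\bd S$ rather than crossing it transversally—so that no face of $C \cap H_i$ is missed by $(\pointset, \rayset)$. This is precisely where the \emph{full} hyperplane activation hypothesis is being used, and the bookkeeping mirrors the technical core of the proof of Theorem~\ref{thm:validIneq} from \citet{BalMar13}. Once that inclusion is proved, the line-segment argument above gives the lemma in a single step, and the strict inequality $\alpha^\T \lpopt < \beta$ is used only to guarantee that the crossing point $y$ on the segment $[\lpopt, x]$ exists and lies strictly in $\bd S$.
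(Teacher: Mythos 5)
Your first paragraph is fine: the lemma is equivalent to showing $\alpha^\T y \ge \beta$ for every $y \in C \cap \bd S$, and the segment argument using $\lpopt \in \interior S$ correctly establishes that equivalence. The genuine gap is in your second step: the inclusion $C \cap \bd S \subseteq \cutpolyhedron(\pointset,\rayset) = \conv(\pointset)+\cone(\rayset)$ is \emph{false}, and not merely in degenerate configurations. Take $n=3$, $S = \{x \suchthat 0\le x_3\le 1\}$, and $C = \optcone$ the simplicial cone with apex $\lpopt=(0,0,\tfrac12)$ and extreme ray directions $r^1=(1,0,1)$, $r^2=(0,1,-1)$, $r^3=(-1,-2,0)$. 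Then $\pointset=\{p^1,p^2\}$ with $p^1 = (\tfrac12,0,1)$, $p^2 = (0,\tfrac12,0)$, and $\rayset=\{r^3\}$, so $\conv(\pointset)+\cone(\rayset)$ is contained in $\{x \suchthat x_1\le \tfrac12\}$. But the two-dimensional face of $C$ spanned by $r^1$ and $r^2$ meets the hyperplane $\{x \suchthat x_3=1\}\subseteq\bd S$ in the unbounded ray $\{(\tfrac12+b,\,b,\,1)\suchthat b\ge 0\}$, whose direction $(1,1,0)=r^1+r^2$ is not in $\cone(r^3)$; hence $C\cap\bd S\not\subseteq\conv(\pointset)+\cone(\rayset)$. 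The failure mode is exactly the case your sketch glosses over: an extreme ray of $C\cap H_i$ arising from a two-face of $C$ has direction equal to a nontrivial conic combination of two edge directions of $C$, and when both of those edges cross $\bd S$ they contribute \emph{points} to $\pointset$ rather than rays to $\rayset$, so no element of $\rayset$ captures that direction and $\conv(\pointset)$ (being bounded) cannot absorb it either.

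What saves the lemma---and what your argument never uses---is the hypothesis $\alpha^\T\lpopt<\beta$ combined with $\alpha^\T p\ge\beta$ for $p\in\pointset$: it yields $\alpha^\T(p-\lpopt)>0$, so the directions in $\rayset_\pointset=\{p-\lpopt \suchthat p\in\pointset\}$ are also directions along which $\alpha^\T x$ does not decrease. The inclusion one should prove is $C\cap\bd S\subseteq\conv(\pointset)+\cone(\rayset\cup\rayset_\pointset)$ (in the example, $(1,1,0)=2(p^1-\lpopt)+2(p^2-\lpopt)$), i.e., containment in the cone $\pointcone(\pointset,\rayset)$ that the paper introduces for precisely this purpose. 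Your closing remark that the strict inequality at $\lpopt$ is used ``only to guarantee that the crossing point $y$ exists'' confirms that this ingredient is missing from where it is essential. For comparison, the paper's own proof sidesteps the Minkowski--Weyl accounting entirely with an edge-following argument: a point of $\cl F\cap\bd S$ violating the cut forces, by moving along edges, either a violated intersection point or a violated ray of $C$. Your route is repairable along the lines above, but as written its central inclusion is wrong.
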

\begin{proof}
  Assume for the sake of contradiction that $F \setminus \interior S$ is nonempty.
  Then $F \cap \bd S$ is nonempty.
  We show that this implies an intersection point or ray of $C$ will violate $\alpha^\T x \ge \beta$.
  
  First, note that $\cl F \cap \bd S$ cannot contain any vertex violating the inequality $\alpha^\T x \ge \beta$,
  as each vertex of $\cl F \cap \bd S$ is an intersection point of $C$.
  Hence, as we are also assuming $F \cap \bd S$ is nonempty, 
  the linear program $\min \{\alpha^\T x : x \in \cl F \cap \bd S\}$ must be unbounded.
  Consider initializing this program at $\lpopt$ and proceeding by the simplex method
  using the steepest edge rule.
  As the simplex method follows edges of $C$ and it will never increase the objective value,
  we will either eventually intersect $\bd S$ or end up on a ray of $C$ that is parallel to $\bd S$.
  This is the desired contradiction:
  in the former case, this is an intersection point that violates the inequality,
  and in the latter case, this is a ray of $C$ that does not intersect $\bd S$ and violates the inequality.
\end{proof}

\subsection{Algorithm and validity}
\label{sec:RHA}

We describe the {\PHAOne} approach for activating a hyperplane partially and prove that it yields a proper point-ray collection.
The algorithm proceeds similarly to the full hyperplane activation procedure,
in that it activates a hyperplane valid for $P$, but unlike the procedure from \cite{BalMar13}, 
{\PHAOne} can use any hyperplane valid for $P$ (not necessarily one that defines $P$),
and the activation is performed on the initial relaxation $\optcone$ 
instead of an iteratively refined relaxation that includes previously activated hyperplanes.
We show this is not only valid but also computationally advantageous.
Part of this advantage comes from that fact that {\PHAOne} only requires primal simplex pivots in the tableau of the LP relaxation to calculate new intersection points and rays.

The tradeoff for the computational advantages of {\PHAOne} is that it potentially creates a weaker point-ray collection compared to full activation,
in the sense that the cuts generated from this collection may be weaker.
This is because all vertices created from the partial activations are restricted to being only one pivot away from $\lpopt$,
i.e., at (edge) distance~1 from $\lpopt$ along the skeleton of $P$.
For this reason, we refer to Algorithm~\ref{alg:RHA1} as {\PHAOne}.
A stronger version of {\PHAOne}, such as
a valid distance 2 (or higher distance) procedure, 
is not difficult to create. 
In other words, one could define a hierarchy of PHA procedures of increasing distance using the principles presented below,
eventually leading to the full activation procedure.

  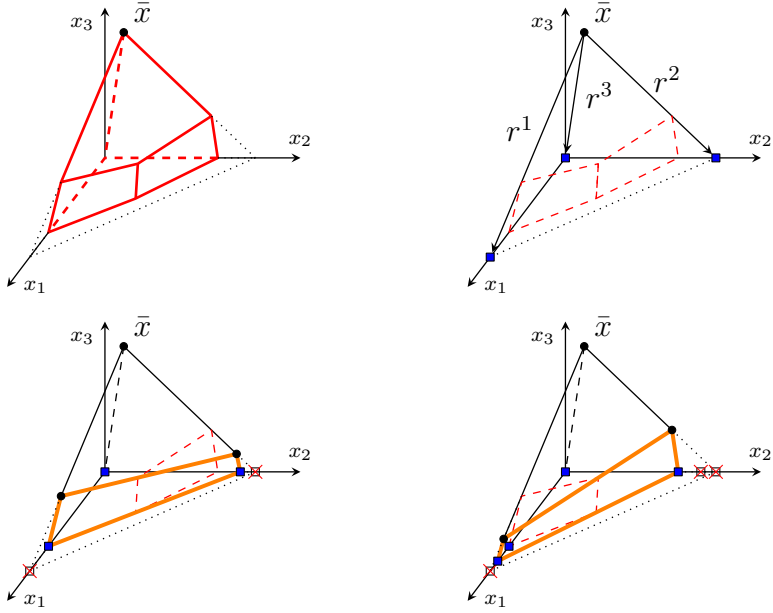
\begin{figure}
  \centering
    \begin{tikzpicture}[line join=round,x={(1 cm, 0 cm)},y={(0 cm, 2 cm)},z={(-0.5 cm,-0.66 cm)},>=stealth,scale=2] 
     \coordinate (barx) at (1/4,1/2,1/4); 

     \coordinate (p1) at (1,0,0);
     \coordinate (p2) at (0,0,1);
     \coordinate (p3) at (0,0,0);
     
     \coordinate (p4) at (3/4,0,0);
     \coordinate (p5) at (0,0,3/4);
     \coordinate (p6) at (9/22,0,9/22);
     
     \coordinate (p7) at (0,0,9/10);
     \coordinate (p8) at (9/10,0,0);
     
     \coordinate (v1) at (3/4,1/6,1/12);
     \coordinate (v2) at (1/12,1/6,3/4);
     \coordinate (v3) at (7/16,1/8,7/16);
     \coordinate (v4) at (1/28,1/14,25/28);
     \coordinate (v5) at (25/28,1/14,1/28);

      \draw[axis] (p4) -- (xyz cs:x=1.3) node[label={[label distance=-4pt]90:\scriptsize $x_2$}] {};
      \draw[axis] (xyz cs:y=-0) -- (xyz cs:y=0.5) node[label={[label distance=-6pt]-135:\scriptsize $x_3$}] {};
      \draw[axis] (p5) -- (xyz cs:z=1.3) node[label={[label distance=-2pt]0:\scriptsize $x_1$}] {};
      
      \draw [polyhedron_edge,red,line width=1pt] (v1) -- (barx) -- (v2) -- (v3) -- (v1);
      \draw [polyhedron_edge,red,line width=1pt] (v1) -- (p4) -- (p6) -- (p5) -- (v2);
      \draw [polyhedron_edge,red,line width=1pt] (p6) -- (v3) node [very near start,fill=none,left=0pt] {};
      \draw [polyhedron_edge,red,dashed,line width=1pt] (barx) -- (p3) node [very near start,fill=none,left=0pt] {};
      \draw [polyhedron_edge,red,dashed,line width=1pt] (p4) -- (p3) -- (p5);
      
       \draw [polyhedron_edge,dotted] (v2) -- (p2) ;
       \draw [polyhedron_edge,dotted] (v1) -- (p1) ;
       \draw [polyhedron_edge,dotted] (p1) -- (p2) ;
        \draw [polyhedron_edge,dotted] (p4) -- (p1) ;
        \draw [polyhedron_edge,dotted] (p5) -- (p2) ;
        
      \node [draw, point, fill=black, label={[label distance=-2pt]45:\normalsize $\bar x$}] at (barx) {};
    \end{tikzpicture}
    \qquad\qquad
    \begin{tikzpicture}[line join=round,x={(1 cm, 0 cm)},y={(0 cm, 2 cm)},z={(-0.5 cm,-0.66 cm)},>=stealth,scale=2] 
      \draw[axis] (xyz cs:x=0) -- (xyz cs:x=1.3) node[label={[label distance=-4pt]90:\scriptsize $x_2$}] {};
      \draw[axis] (xyz cs:y=-0) -- (xyz cs:y=0.5) node[label={[label distance=-6pt]-135:\scriptsize $x_3$}] {};
      \draw[axis] (xyz cs:z=0) -- (xyz cs:z=1.3) node[label={[label distance=-2pt]0:\scriptsize $x_1$}] {};
      
      \draw [polyhedron_edge,dotted] (p1) -- (p2) node [very near start,fill=none,left=0pt] {};
      \draw [ray] (barx) -- (p1) node[label={[label distance=-4pt,pos=.5]45:\normalsize $r^2$}] {};
      \draw [ray] (barx) -- (p2) node[label={[label distance=-4pt,pos=.5]135:\normalsize $r^1$}] {};
      \draw [ray] (barx) -- (p3) node[label={[label distance=0pt,pos=.5]0:\normalsize $r^3$}] {};
      
      \draw [polyhedron_edge,red,dashed] (p6) -- (p4) -- (v1) -- (v3) -- cycle;
      \draw [polyhedron_edge,red,dashed] (p5) -- (v2) -- (v3) -- (p6) -- cycle;
      
       \node [draw, point, fill=black, label={[label distance=-2pt]45:\normalsize $\bar x$}] at (barx) {};
       
      \node [draw, intersection_point, fill=blue] at (p1) {};
     \node [draw, intersection_point, fill=blue] at (p2) {};
     \node [draw, intersection_point, fill=blue] at (p3) {};
    \end{tikzpicture}
  
    \begin{tikzpicture}[line join=round,x={(1 cm, 0 cm)},y={(0 cm, 2 cm)},z={(-0.5 cm,-0.66 cm)},>=stealth,scale=2] 
      \draw[axis] (p3) -- (xyz cs:x=1.3) node[label={[label distance=-4pt]90:\scriptsize $x_2$}] {};
      \draw[axis] (xyz cs:y=-0) -- (xyz cs:y=0.5) node[label={[label distance=-6pt]-135:\scriptsize $x_3$}] {};
      \draw[axis] (p3) -- (xyz cs:z=1.3) node[label={[label distance=-2pt]0:\scriptsize $x_1$}] {};
      
      \draw [polyhedron_edge,black] (v2) -- (barx) node [very near start,fill=none,left=0pt] {};
      \draw [polyhedron_edge,dashed] (barx) -- (p3) node [very near start,fill=none,left=0pt] {};
      \draw [polyhedron_edge] (barx) -- (v5) node [very near start,fill=none,left=0pt] {};
      
      \draw [polyhedron_edge,red,dashed] (p6) -- (p4) -- (v1) -- (v3) -- cycle;

      \draw [polyhedron_edge,line width=1.5pt,orange] (v2) -- (p5) -- (p8) -- (v5) -- cycle;
      
       \draw [polyhedron_edge,dotted] (v2) -- (p2) ;
       \draw [polyhedron_edge,dotted] (v5) -- (p1) ;
       \draw [polyhedron_edge,dotted] (p1) -- (p2) ;
        \draw [polyhedron_edge,dotted] (p8) -- (p1) ;
        \draw [polyhedron_edge,dotted] (p5) -- (p2) ;
        
      \node [draw, point, label={[label distance=-2pt]45:\normalsize $\bar x$}] at (barx) {};
       
      \node [draw, point] at (v2) {};
      \node [draw, point] at (v5) {};
     \node [draw, intersection_point, fill=blue] at (p3) {};
     
     \node [draw, intersection_point, fill=blue] at (p8) {};
     \node [draw, intersection_point, fill=blue] at (p5) {};
     
     \node [draw, intersection_point, fill=none] at (p1) {};
     \node [draw, cross=3pt, red] at (p1) {};
     \node [draw, intersection_point, fill=none] at (p2) {};
     \node [draw, cross=3pt, red] at (p2) {};
    \end{tikzpicture}
    \qquad\qquad
    \begin{tikzpicture}[line join=round,x={(1 cm, 0 cm)},y={(0 cm, 2 cm)},z={(-0.5 cm,-0.66 cm)},>=stealth,scale=2] 
      \draw[axis] (p3) -- (xyz cs:x=1.3) node[label={[label distance=-4pt]90:\scriptsize $x_2$}] {};
      \draw[axis] (xyz cs:y=-0) -- (xyz cs:y=0.5) node[label={[label distance=-6pt]-135:\scriptsize $x_3$}] {};
      \draw[axis] (p3) -- (xyz cs:z=1.3) node[label={[label distance=-2pt]0:\scriptsize $x_1$}] {};
      
      \draw [polyhedron_edge,black] (v4) -- (barx);
      \draw [polyhedron_edge,dashed] (barx) -- (p3);
      \draw [polyhedron_edge] (barx) -- (v1);
      
      \draw [polyhedron_edge,red,dashed] (p5) -- (v2) -- (v3) -- (p6) -- cycle;

      \draw [polyhedron_edge,line width=1.5pt,orange] (p7) -- (v4) -- (v1) -- (p4) -- cycle;
      
       \draw [polyhedron_edge,dotted] (v2) -- (p2) ;
       \draw [polyhedron_edge,dotted] (v1) -- (p1) ;
       \draw [polyhedron_edge,dotted] (p1) -- (p2) ;
        \draw [polyhedron_edge,dotted] (p4) -- (p1) ;
        \draw [polyhedron_edge,dotted] (p5) -- (p2) ;
        
      \node [draw, point, label={[label distance=-2pt]45:\normalsize $\bar x$}] at (barx) {};
       
      \node [draw, point] at (v1) {};
      \node [draw, point] at (v4) {};
     \node [draw, intersection_point, fill=blue] at (p3) {};
     
     \node [draw, intersection_point, fill=none] at (p8) {};
     \node [draw, cross=3pt, red] at (p8) {};
     \node [draw, intersection_point, fill=none] at (p1) {};
     \node [draw, cross=3pt, red] at (p1) {};
     \node [draw, intersection_point, fill=none] at (p2) {};
     \node [draw, cross=3pt, red] at (p2) {};
     
     \node [draw, intersection_point, fill=blue] at (p7) {};
     \node [draw, intersection_point, fill=blue] at (p4) {};
     \node [draw, intersection_point, fill=blue] at (p5) {};
    \end{tikzpicture}
    \caption{This figure illustrates the sequence of activations performed by \PHAOne{},
    where the cut-generating set is $\{x \in \R^3: 0 \le x_j \le 1, j \in \{1,2\}\}$.
    		The first panel shows $P$.
		The second panel depicts $\optcone$. 
		The bottom two panels show the sequential activations of the two hyperplanes not defining $\optcone$.
	}
    \label{fig:partial-activation}
  \end{figure} 

Before proceeding, we first provide an illustrative example, shown in Figure~\ref{fig:partial-activation}, of our procedure that guides the intuition for the subsequent formal proof.
The polytope in the first panel shows the feasible region of 
  $P := \{x \in \R^3 \suchthat -2 x_2 + x_3 \le 0; -2 x_1 + x_3 \le 0; 12 x_1 + 10 x_2 - 5 x_3 \le 9; 10 x_1 + 12 x_2 - 5 x_3 \le 9; x_1 + x_2 + x_3 \le 1\}$.
We denote the hyperplanes defining $P$ by $H_1$--$H_5$ in the order that they are listed in the definition of $P$. 
The next panel shows the cone $\optcone$, which is defined by $H_1$--$H_3$. 
The rays of $\optcone$ are denoted $r^1$ to $r^3$. 
The cut-generating set $S$ is the box, 
  $\{x \in \R^3: 0 \le x_j \le 1, j \in \{1,2\}\}$. 
We use circular and square nodes to represent vertices and intersection points, respectively. 
The remaining two panels illustrate the full and partial activation of hyperplanes $H_4$ and $H_5$ that do not play a role in defining $\optcone$.
The third panel shows the full activation of $H_4$. 
This activation cuts off the two intersection points corresponding to $r^1$ and $r^2$ intersected with $\bd{S}$, and replaces them with two intersection points on $H_4$. 
There are also two new vertices that are created --- since activation always occurs on $\optcone$, these vertices lie on rays of this cone.
The last panel shows $H_5$ partially activated on $\optcone$.
This second activation creates a stronger intersection point along axis $x_2$, as well as a \emph{weaker} intersection point along axis $x_1$, and it removes one of the intersection points from the activation of $H_4$.

We now define some notation used in the algorithm description.
For a given $(a_h, b_h) \in \R^n \times \R$,
let $H_h := \{x : a_h^\T x = b_h\}$ be a hyperplane such that the corresponding halfspace
$H_h^+ := \{x : a_h^\T x \ge b_h\}$ is valid for $P$.
Every element of a proper point-ray collection $(\pointset,\rayset)$ arises from the intersection of an edge of some relaxation of $P$ with $\bd{S}$; if the starting vertex of that edge lies on a ray $r^j \in \optconerays$,
then we say that the point or ray \emph{originates} from $r^j$.
For each $r^j \in \optconerays$, let the distance to $H_h$ along the ray $r^j$ starting at $\lpopt$ be 
  \[
    \distToHplane{H_h}{r^j} := 
      \begin{cases}
        {(b_h - a_h^\T \lpopt})/{a_h^\T r^j} & \mbox{if $a_h^\T r^j > 0$}, \\
        \infty & \mbox{otherwise.}
      \end{cases}
  \]
We will slightly overload this notation to also refer to the distance along ray $r^j$ to $\bd{S}$, $\distToHplane{\bd{S}}{r^j}$.
This will be the distance along $r^j$
from $\lpopt$ to the nearest facet of $S$ (or $\infty$ if no facet of $S$ is intersected).
With these definitions, we present Algorithm~\ref{alg:RHA1},
which takes as an input the polyhedron $P$, the cut-generating set $S$,
the hyperplane $H_h$ being activated,
a set of rays $\rayset_A$ from $\optconerays$ that will be permitted to be cut by $H_h$ (a parameter we will use only later),
and any proper point-ray collection to be modified via the activation of $H_h$.
We also assume that along with the point-ray collection, we have kept a history of which edge led to each point and ray to be added to the collection.
This is used in the algorithm in step~\ref{step:RHA1:remove-viol-points} when deciding which points and rays to remove.

\begin{algorithm}[ht!]
\caption{Distance 1 Partial Hyperplane Activation}\label{alg:RHA1}
\begin{algorithmic}[1]
\Input Polyhedron $P$ defined by hyperplane set $\hplaneset$;
  $P_I$-free convex set $S$;
  hyperplane $H_h$ valid for $P$;
  set of rays $\rayset_A \subseteq \optconerays$;
  proper point-ray collection $(\pointset,\rayset)$
  contained in $\optcone$.
\Function{\PHAOne}{$P, S, H_h, \rayset_A, (\pointset,\rayset)$} \label{step:RHA1:setup}
    \ForAll{$r \in \rayset_A$ such that $\distToHplane{H_h}{r} < \distToHplane{\bd{S}}{r}$} \label{step:RHA1:act-hplane}
     \State \label{step:RHA1:remove-viol-points}
        Remove all points and rays in $(\pointset,\rayset)$
        that violate $H_h^+$ and originate from
        \Statex[3]
          a vertex on $r$
          but do not lie on or coincide 
          with any ray from $\optconerays \setminus \rayset_A$.
      \ForAll{new edges $e$ originating at vertex $r \cap H_h$} \label{step:RHA1:process-ray}
        \If
        {$e$ (emanating from $r \cap H_h$) does not intersect any $H \in \optconehplanes \setminus \hplanes(r)$ prior to $\bd{S}$} \label{step:RHA1:is-ray-edge}
        \If{$e \cap \bd{S} \ne \emptyset$}
            Add intersection point of $e$ with $\bd{S}$ to $\pointset$. 
            \label{step:RHA1:inter-pt}
        \Else \label{step:RHA1:inter-ray}
            Add the ray $e$ to $\rayset$.
          \EndIf
        \EndIf
      \EndFor
    \EndFor
  \State\Return $\pointset$ and $\rayset$. \label{step:RHA1:return}
\EndFunction
\end{algorithmic}
\end{algorithm}

Theorem~\ref{thm:RHA_valid} states the validity of Algorithm~\ref{alg:RHA1} for the special case when all rays of $\optcone$ are permitted to be cut.
We show how to relax this restriction in Section~\ref{sec:tilting}.

\begin{theorem} \label{thm:RHA_valid}
  Algorithm~\ref{alg:RHA1} when $\rayset_A = \optconerays$ outputs a proper point-ray collection.
\end{theorem}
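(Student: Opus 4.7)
The plan is to proceed by induction on the number of hyperplane activations performed. The base case is the initial collection $(\initpointset,\initrayset)$ obtained from $\optcone \cap \bd{S}$, which is proper by Theorem~\ref{thm:validIneq} applied to $C = \optcone$ (defined by the subset $\optconehplanes \subseteq \hplaneset$). For the inductive step, let $(\pointset,\rayset)$ be a proper point-ray collection contained in $\optcone$ and let $(\pointset',\rayset')$ be the output of a single call to \PHAOne{} on a hyperplane $H_h$ valid for $P$ with $\rayset_A = \optconerays$. Fix any $(\alpha,\beta)$ feasible for~\eqref{CutRegion} relative to $(\pointset',\rayset')$ satisfying $\alpha^\T \lpopt < \beta$. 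Since $\interior{S}$ is $P_I$-free and $P \subseteq \optcone \cap H_h^+$ (because $H_h^+$ is valid for $P$), it suffices to prove the stronger inclusion
\[
  \{x \in \optcone \cap H_h^+ : \alpha^\T x < \beta\} \subseteq \interior{S},
\]
from which validity of $\alpha^\T x \ge \beta$ for $P_I$ follows immediately.

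To establish this inclusion, I would adapt the simplex-method argument from the proof of Lemma~\ref{lem:valid_inequalities} to the polyhedron $C := \optcone \cap H_h^+$. Supposing for contradiction that the inclusion fails, initialize the simplex method at $\lpopt$ using the steepest-edge rule and follow edges of $C$ strictly decreasing $\alpha^\T x$. Edges of $C$ at $\lpopt$ coincide with edges of $\optcone$ up to truncation by $H_h$, so the first pivot terminates either (i) at an intersection point in $\initpointset$, (ii) on an unbounded ray in $\initrayset$, or (iii) at a distance-1 vertex $v = r \cap H_h$ with $\alpha^\T v < \beta$. Since Step~\ref{step:RHA1:remove-viol-points} discards only elements violating $H_h^+$, and the elements in (i) and (ii) by construction do not (their generating rays do not meet $H_h$ before $\bd{S}$), cases (i) and (ii) contradict feasibility of $(\alpha,\beta)$; in case (iii), if the next pivot from $v$ reaches $\bd{S}$ without first crossing any facet $H \in \optconehplanes \setminus \hplanes(r)$ of $\optcone$, the resulting intersection point is added by Step~\ref{step:RHA1:inter-pt} and again contradicts feasibility.

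The main obstacle is the remaining case, in which the next simplex pivot from $v$ leaves along an edge of $C$ that crosses some facet $H \in \optconehplanes \setminus \hplanes(r)$ at a distance-2 vertex of $C$ before reaching $\bd{S}$; Algorithm~\ref{alg:RHA1} discards this edge in Step~\ref{step:RHA1:is-ray-edge}, so its $\bd{S}$-intersection point is absent from $(\pointset',\rayset')$. To close this gap, I would show that the direction $d$ of any such discarded edge admits a non-negative decomposition $d = \mu d_1 + \nu d_2$, where $d_1$ is the direction of a PHA-retained edge of $C$ out of $v$ (one that stays inside $\optcone$ until $\bd{S}$) and $d_2$ is the direction from $\lpopt$ along the ray $r^* \in \optconerays$ obtained by dropping $H$ from the $n-1$ hyperplanes defining $r$. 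Because $\rayset_A = \optconerays$ guarantees that every ray of $\optcone$ has been processed, both the intersection point or ray associated with $r^*$ and the $\bd{S}$-endpoint reached via $d_1$ belong to $(\pointset',\rayset')$, so feasibility of $(\alpha,\beta)$ forces $\alpha^\T d \ge 0$, contradicting the strict decrease along the discarded direction. Verifying this decomposition via explicit simplex tableau updates at $v$, and composing it across iterated calls of \PHAOne{} by invoking the inductive hypothesis on previously activated hyperplanes, is the technical heart of the proof.
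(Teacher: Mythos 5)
Your strategy is genuinely different from the paper's, and it contains a gap that I do not think can be repaired within your framework. The paper never runs the edge-following argument of Lemma~\ref{lem:valid_inequalities} on $\optcone \cap H_h^+$. Instead it forms the cone $\pointcone^* := \pointcone(\pointset^*,\rayset^*)$ spanned from $\lpopt$ by the \emph{output} collection, observes that intersecting the edges of $\pointcone^*$ with $\bd{S}$ reproduces exactly that collection (so Lemma~\ref{lem:valid_inequalities} applies to $\pointcone^*$ with no discarded edges to worry about), and then reduces the theorem to the single containment $P_I \subseteq \pointcone^*$, which is obtained by chaining Lemmas~\ref{lem:PinCbar}, \ref{lemma:pointset_inclusion_pointcone} and \ref{lemma:relaxation_activation} through the intermediate collection coming from $\pointcone(\pointset,\rayset)\cap H_h^+$. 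That is where the properness of the input collection (your inductive hypothesis) actually gets used, via Lemma~\ref{lem:PinCbar}.

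The gap in your version is the ``stronger inclusion'' $\{x\in\optcone\cap H_h^+ : \alpha^\T x<\beta\}\subseteq\interior{S}$: it is false as soon as more than one hyperplane has been activated, so no analysis of the simplex path can establish it. If an earlier activation of some $H_1$ cut a ray $r'$ of $\optcone$ before $\bd{S}$, the initial point $p^{r'}=r'\cap\bd{S}$ was legitimately removed and replaced by deeper points on $H_1$, and a vector $\alpha$ feasible for the current collection may cut $p^{r'}$ off. If the present $H_h$ does not cut $r'$ before $\bd{S}$, then $p^{r'}$ still lies on an edge of $\optcone\cap H_h^+$ and on $\bd{S}$, so your set contains a boundary point of $S$ even though the cut is valid; the correct polyhedron on which to argue is one built from the current collection (such as $\pointcone(\pointset,\rayset)\cap H_h^+$, or $\pointcone^*$ itself), not $\optcone\cap H_h^+$, and your appeal to the inductive hypothesis never enters at the point where it is needed. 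Separately, your treatment of the edges discarded in step~\ref{step:RHA1:is-ray-edge} misidentifies the difficulty: with $\rayset_A=\optconerays$, a discarded edge is precisely a bounded edge of $\optcone\cap H_h^+$ joining two distance-$1$ vertices $v=r\cap H_h$ and $w=r^*\cap H_h$ that both lie in $\interior{S}$, so it meets $\bd{S}$ nowhere and requires no patching; moreover the proposed decomposition $d=\mu d_1+\nu d_2$ with $\mu,\nu\ge 0$ cannot hold in general, since $d$ lies in the two-dimensional face spanned by $r$ and $r^*$ while every retained edge out of $v$ other than the one returning to $\lpopt$ leaves that face.
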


In the rest of this section, we will build to a proof of Theorem~\ref{thm:RHA_valid} with the help of several useful intermediate results (Lemmas~\ref{lem:PinCbar}--\ref{lem:redundantPoints}).
The validity of {\PHAOne} follows from proving that the inequalities that can be generated from \eqref{CutLP} are valid for a relaxation of $P_I$ related only to $\lpopt$ and the points and rays in the collection.
Unlike the proof of full hyperplane activation, this does not rely on any particular relaxation of $P$ used to obtain the points and rays.
Given a point-ray collection $(\pointset, \rayset)$, let 
  $\rayset_\pointset := \{p - \lpopt : p \in \pointset\}$ 
and
  $
    \pointcone(\pointset,\rayset) := \lpopt + \cone(\rayset_\pointset \cup \rayset).
  $
That is, $\pointcone(\pointset,\rayset)$ is the polyhedral cone with apex at $\lpopt$
and rays including $\rayset$ and a ray from $\lpopt$ through each intersection point in $\pointset$.

\begin{lemma} \label{lem:PinCbar}
  If $(\pointset,\rayset)$ is a proper point-ray collection,
  then
    $
      P_I \subseteq \pointcone(\pointset,\rayset).
    $
\end{lemma}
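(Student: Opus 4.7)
The plan is to argue by contradiction: assume there exists $x^I \in P_I$ with $x^I \notin \pointcone(\pointset, \rayset)$, and from this construct a pair $(\alpha, \beta)$ feasible to \eqref{CutRegion} with both $\alpha^\T \lpopt < \beta$ and $\alpha^\T x^I < \beta$. Properness would then force $\alpha^\T x^I \ge \beta$, giving the desired contradiction.

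First I would observe that $\pointcone(\pointset,\rayset) - \lpopt = \cone(\rayset_\pointset \cup \rayset)$ is a closed polyhedral cone with apex at the origin. Since $x^I - \lpopt$ lies outside this cone, Farkas' lemma supplies a vector $\bar\alpha \in \R^n$ with $\bar\alpha^\T r \ge 0$ for every $r \in \rayset_\pointset \cup \rayset$, while $\bar\alpha^\T(x^I - \lpopt) < 0$. In particular, $\bar\alpha^\T p \ge \bar\alpha^\T \lpopt$ for every $p \in \pointset$, $\bar\alpha^\T r \ge 0$ for every $r \in \rayset$, and $\bar\alpha^\T x^I < \bar\alpha^\T \lpopt$.

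The difficulty is that the natural choice $\beta := \bar\alpha^\T \lpopt$ makes $(\bar\alpha, \beta)$ feasible to \eqref{CutRegion} but only yields $\bar\alpha^\T \lpopt = \beta$, rather than the strict inequality demanded by properness. To fix this, I would perturb $\bar\alpha$ toward the relative interior of the dual cone $\pointcone^\star := \{\alpha \in \R^n : \alpha^\T r \ge 0 \text{ for all } r \in \rayset_\pointset \cup \rayset\}$. Specifically, pick any $\alpha_0$ in that relative interior, which is nonempty since $\pointcone^\star$ is a nonempty polyhedral cone, and for $\epsilon > 0$ set $\alpha^\epsilon := \bar\alpha + \epsilon \alpha_0$. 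Then $(\alpha^\epsilon)^\T (p - \lpopt) > 0$ strictly for every $p \in \pointset$. Choose $0 < \mu^\epsilon < \min_{p \in \pointset} (\alpha^\epsilon)^\T (p - \lpopt)$ (the case $\pointset = \emptyset$ is easier, requiring only a positive $\mu^\epsilon$) and set $\beta^\epsilon := (\alpha^\epsilon)^\T \lpopt + \mu^\epsilon$. The pair $(\alpha^\epsilon, \beta^\epsilon)$ is then feasible to \eqref{CutRegion} with $(\alpha^\epsilon)^\T \lpopt < \beta^\epsilon$, so properness yields $(\alpha^\epsilon)^\T x^I \ge \beta^\epsilon$, whence $(\alpha^\epsilon)^\T (x^I - \lpopt) \ge \mu^\epsilon > 0$.

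Letting $\epsilon \to 0^+$ with $\mu^\epsilon \to 0^+$, continuity of the inner product gives $\bar\alpha^\T(x^I - \lpopt) \ge 0$, contradicting the strict negativity obtained from Farkas. The main obstacle is the perturbation step: one must exhibit an $\alpha_0$ making $\alpha_0^\T(p - \lpopt)$ strictly positive for every $p \in \pointset$ at once. In the typical pointed case---in particular for collections built on the simplicial cone $\optcone$, which inherits pointedness from the pointedness of $P$---$\pointcone^\star$ has nonempty interior and any interior point works. In the degenerate non-pointed case, the separating $\bar\alpha$ must be orthogonal to the lineality subspace of $\cone(\rayset_\pointset \cup \rayset)$, so the argument restricts cleanly to that orthogonal complement.
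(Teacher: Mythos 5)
Your proof is correct and takes essentially the same route as the paper's two-sentence argument: if some $x^I \in P_I$ lay outside $\pointcone(\pointset,\rayset)$, one exhibits an inequality feasible to \eqref{CutRegion} that cuts both $\lpopt$ and $x^I$, contradicting properness. You simply make explicit the separation step and the perturbation needed to turn the apex-supporting separator into one that \emph{strictly} cuts $\lpopt$ (legitimate here because $\cone(\rayset_\pointset \cup \rayset) \subseteq \cone(\optconerays)$ is pointed), a detail the paper's proof leaves implicit.
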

\begin{proof}
  Otherwise there exists an inequality valid for $\cutpolyhedron$ that cuts both $\lpopt$ and a point of $P_I$.
  This contradicts the assumption that $(\pointset,\rayset)$ is proper.
\end{proof}

The next two lemmas state inclusion properties for the sets of cuts obtainable from two different point-ray collections.

\begin{lemma} \label{lemma:pointset_inclusion_pointcone}
  For $i \in \{1,2\}$, let $\pointset^i$ denote a set of points on $\bd{S}$
  and $\rayset^i$ a set of rays.
  If for every $(\alpha,\beta)$ such that $\alpha$ is feasible to $\AlphaSystem(\beta,\pointset^2,\rayset^2)$ and $\alpha^\T \lpopt < \beta$, 
  $\alpha$ is also feasible to
    $\AlphaSystem(\beta,\pointset^1,\rayset^1)$,
  then 
    \[
      \pointcone( \pointset^1, \rayset^1) 
        \subseteq \pointcone( \pointset^2, \rayset^2).
    \]
\end{lemma}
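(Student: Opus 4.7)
The plan is to argue by contradiction, first reducing the problem to a separating hyperplane that passes through $\lpopt$ and then perturbing it into the strict-cut regime to which the hypothesis actually applies.

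Suppose for contradiction that there exists $x \in \pointcone(\pointset^1, \rayset^1) \setminus \pointcone(\pointset^2, \rayset^2)$. Since $\pointcone(\pointset^2, \rayset^2)$ is a closed convex set, the separating-hyperplane theorem yields $(\alpha, \beta)$ with $\alpha^\T x < \beta$ and $\alpha^\T y \ge \beta$ for every $y \in \pointcone(\pointset^2, \rayset^2) = \lpopt + \cone(\rayset_{\pointset^2} \cup \rayset^2)$. Pushing $y$ arbitrarily far along each generating ray forces $\alpha^\T r \ge 0$ for every $r \in \rayset_{\pointset^2} \cup \rayset^2$, and specialization at the apex gives $\alpha^\T \lpopt \ge \beta$. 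Replacing $\beta$ by $\tilde\beta := \alpha^\T \lpopt$ preserves $\alpha^\T x < \tilde\beta$ (since $\beta \le \tilde\beta$) and still yields $\alpha^\T y \ge \tilde\beta$ on $\pointcone(\pointset^2, \rayset^2)$; in particular $\alpha^\T p \ge \tilde\beta$ for each $p \in \pointset^2$.

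The main obstacle is that the hypothesis governs only pairs with $\alpha^\T \lpopt < \beta$ strictly, whereas $(\alpha, \tilde\beta)$ satisfies equality. To overcome this I would fix some $\alpha_0$ with $\alpha_0^\T(p - \lpopt) > 0$ for every $p \in \pointset^2$ and $\alpha_0^\T r \ge 0$ for every $r \in \rayset^2$; such an $\alpha_0$ exists precisely when $\lpopt \notin \cutpolyhedron(\pointset^2, \rayset^2)$, which is the only regime in which the hypothesis is nontrivial (in the degenerate case $\lpopt \in \cutpolyhedron(\pointset^2, \rayset^2)$ the hypothesis is vacuous and the lemma is not invoked). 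Setting $\alpha_\epsilon := \alpha + \epsilon \alpha_0$ for $\epsilon > 0$, we have $\alpha_\epsilon^\T(p - \lpopt) > 0$ for all $p \in \pointset^2$ and $\alpha_\epsilon^\T r \ge 0$ for all $r \in \rayset^2$, so any $\beta_\epsilon$ strictly between $\alpha_\epsilon^\T \lpopt$ and $\min_{p \in \pointset^2} \alpha_\epsilon^\T p$ places $\alpha_\epsilon$ in $\AlphaSystem(\beta_\epsilon, \pointset^2, \rayset^2)$ with the strict condition $\alpha_\epsilon^\T \lpopt < \beta_\epsilon$.

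The hypothesis now delivers $\alpha_\epsilon \in \AlphaSystem(\beta_\epsilon, \pointset^1, \rayset^1)$, implying $\alpha_\epsilon^\T(p - \lpopt) \ge 0$ for every $p \in \pointset^1$ and $\alpha_\epsilon^\T r \ge 0$ for every $r \in \rayset^1$. Writing $x - \lpopt = \sum_p \lambda_p(p - \lpopt) + \sum_r \mu_r r$ with $\lambda_p, \mu_r \ge 0$ indexed by $\pointset^1$ and $\rayset^1$ and summing the corresponding inequalities gives $\alpha_\epsilon^\T(x - \lpopt) \ge 0$. Letting $\epsilon \downarrow 0$ yields $\alpha^\T x \ge \alpha^\T \lpopt = \tilde\beta$, contradicting $\alpha^\T x < \tilde\beta$. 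The delicate step is the perturbation, which is what converts a separator that inevitably passes through the apex into one satisfying the strict cut condition required by the hypothesis; the cone-apex manipulations and the one-parameter limit passage are routine.
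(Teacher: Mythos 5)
Your proof is correct, and while it shares the paper's overall template (contradiction plus a separating hyperplane), the execution is genuinely different. The paper picks an extreme ray $r$ of $\pointcone(\pointset^1,\rayset^1)$ not contained in $\pointcone(\pointset^2,\rayset^2)$, takes $q := r \cap \bd{S}$ (or $q := r$), and asserts in one step the existence of a single inequality $\widehat\alpha^\T x \ge \widehat\beta$ separating \emph{both} $\lpopt$ and $q$ from $\cutpolyhedron(\pointset^2,\rayset^2)$; this immediately produces a pair $(\widehat\alpha,\widehat\beta)$ that is feasible for collection $2$, cuts $\lpopt$ strictly, and violates the constraint of collection $1$ indexed by $q$. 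You instead separate the offending point $x$ from the cone $\pointcone(\pointset^2,\rayset^2)$ itself, observe that any such separator necessarily supports the cone at the apex ($\alpha^\T\lpopt = \tilde\beta$), and then restore the strict condition $\alpha_\epsilon^\T\lpopt < \beta_\epsilon$ by an explicit perturbation $\alpha_\epsilon = \alpha + \epsilon\alpha_0$ followed by a limit. What your route buys is rigor at exactly the point the paper glosses over: simultaneous strict separation of two points from a convex set is not automatic and the paper does not justify it, whereas your perturbation step is fully spelled out (and your Motzkin-type criterion for the existence of $\alpha_0$ is accurate). What it costs is length, and it surfaces the degenerate case $\lpopt \in \cutpolyhedron(\pointset^2,\rayset^2)$, in which the hypothesis is vacuous and the stated implication can fail; you are right that this regime is excluded in every use of the lemma, and the paper's own proof silently assumes the same nondegeneracy when it separates $\lpopt$ from $\cutpolyhedron(\pointset^2,\rayset^2)$, so you are on equal footing there. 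One last cosmetic point: your witness is an arbitrary $x \in \pointcone(\pointset^1,\rayset^1) \setminus \pointcone(\pointset^2,\rayset^2)$ rather than an extreme ray, which is fine since the conic decomposition of $x - \lpopt$ over $\rayset_{\pointset^1} \cup \rayset^1$ is all you need.
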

\begin{proof}
  Let $\pointcone^i := \pointcone( \pointset^i, \rayset^i)$, $i \in \{1,2\}$.  Assume for the sake of contradiction that
  there exists an extreme ray $r$ of $\pointcone^1$ that is not in $\pointcone^2$.
  If $r$ does not intersect $\bd{S}$, define $q := r$;
  otherwise,
  let $q := r \cap \bd{S}$, and note that $q \notin \pointcone^2$.
  Then there exists an inequality $\widehat\alpha^\T x \ge \widehat\beta$ that separates $\lpopt$ and $q$ from 
    $\cutpolyhedron(\pointset^2,\rayset^2)$.
  This is a contradiction, as $\widehat\alpha$ is feasible to $\AlphaSystem(\widehat\beta,\pointset^2,\rayset^2)$
  and $\widehat\alpha^\T \lpopt < \widehat\beta$, but it is not feasible to
    $\AlphaSystem(\widehat\beta,\pointset^1,\rayset^1)$,
  since it violates the inequality corresponding to $q$. 
\end{proof}

\begin{lemma} \label{lemma:relaxation_activation}
  Let $P^1$ and $P^2$ be rational convex polyhedra such that $P^1 \subseteq P^2$,
  and $\lpopt \in P^2$.
  Let $\pointset^i$ and $\rayset^i$ denote the intersection points and rays generated
  from the intersection of $P^i$ with $\bd{S}$, for $i \in \{1,2\}$.
  Then, for a given constant $\beta$,
  a vector $\alpha$ feasible to 
  $\AlphaSystem(\beta,\pointset^2,\rayset^2)$ and cutting $\lpopt$ 
  will also be feasible to $\AlphaSystem(\beta,\pointset^1,\rayset^1)$.
\end{lemma}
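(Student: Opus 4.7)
The plan is to leverage Lemma~\ref{lem:valid_inequalities} applied to $C = P^2$. Since $\alpha$ is feasible to $\AlphaSystem(\beta, \pointset^2, \rayset^2)$ and cuts $\lpopt$, that lemma yields $\{x \in P^2 : \alpha^\T x < \beta\} \subseteq \interior S$, and then $P^1 \subseteq P^2$ forces $\alpha^\T x \ge \beta$ throughout $P^1 \setminus \interior S$. I would then verify the two families of constraints defining $\AlphaSystem(\beta, \pointset^1, \rayset^1)$ separately.

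For the point constraints, each $p \in \pointset^1$ lies on an edge of $P^1$ and on $\bd S$, so $p \in P^1 \setminus \interior S$ and $\alpha^\T p \ge \beta$ is immediate from the conclusion above.

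For the ray constraints, fix $r \in \rayset^1$ with starting vertex $v$, so that the unbounded edge $\{v + tr : t \ge 0\}$ lies in $P^1$ and never meets $\bd S$. By connectedness this edge sits either entirely in the exterior $\R^n \setminus S$ or entirely in $\interior S$. In the exterior case, every $v + tr$ is in $P^2 \setminus \interior S$, so $\alpha^\T v + t\,\alpha^\T r \ge \beta$ holds for arbitrarily large $t$, and letting $t \to \infty$ forces $\alpha^\T r \ge 0$. The main obstacle is the interior case, where $r \in \mathrm{rec}(P^1) \cap \mathrm{rec}(S)$ and the containment $\{x \in P^2 : \alpha^\T x < \beta\} \subseteq \interior S$ gives no direct leverage on $r$, because the ray $\lpopt + tr$ never leaves $\interior S$. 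To finish, I would show that $r$ admits a nonnegative decomposition over $\rayset^2_\pointset \cup \rayset^2$: combining the Minkowski--Weyl description of $\mathrm{rec}(P^2) \supseteq \mathrm{rec}(P^1)$ with the observation that each unbounded edge of $P^2$ either contributes a ray to $\rayset^2$ or crosses $\bd S$ at a point in $\pointset^2$, one expresses $r = \sum_i \gamma_i (p_i - \lpopt) + \sum_j \delta_j r^2_j$ with $\gamma_i, \delta_j \ge 0$, $p_i \in \pointset^2$, $r^2_j \in \rayset^2$. The bound $\alpha^\T r \ge 0$ then follows from $\alpha^\T(p_i - \lpopt) = \alpha^\T p_i - \alpha^\T \lpopt > 0$ and $\alpha^\T r^2_j \ge 0$. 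The delicate step is justifying the decomposition uniformly regardless of which vertex of $P^2$ each extreme edge emanates from, which is where the full-edge hypothesis on $(\pointset^2, \rayset^2)$ is actually used.
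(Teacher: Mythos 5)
Your core argument is exactly the paper's: apply Lemma~\ref{lem:valid_inequalities} with $C = P^2$ to get $\{x \in P^2 : \alpha^\T x < \beta\} \subseteq \interior{S}$, then use $P^1 \cap \bd{S} \subseteq P^2 \setminus \interior{S}$ to dispose of the point constraints. The paper's proof in fact stops there; you go further by treating the ray constraints explicitly, and your exterior sub-case (the tail of the edge lies outside $S$, so $\alpha^\T(v+tr) \ge \beta$ for all $t \ge 0$ and hence $\alpha^\T r \ge 0$) is correct and is precisely the argument the paper leaves implicit.

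The interior sub-case, however, contains a genuine gap as written. You propose $r = \sum_i \gamma_i(p_i - \lpopt) + \sum_j \delta_j r^2_j$ with $\gamma_i, \delta_j \ge 0$. But an extreme edge of $P^2$ that crosses $\bd{S}$ at $p_i$ emanates from some vertex $v_i$ of $P^2$ and has direction $\rho_i$ with $p_i = v_i + t_i\rho_i$; the vector $p_i - \lpopt = (v_i - \lpopt) + t_i\rho_i$ points in a different direction from $\rho_i$ whenever $v_i \ne \lpopt$, so the cone generated by $\{p_i - \lpopt\}_i \cup \rayset^2$ need not contain $\mathrm{rec}(P^2)$. (Identifying $p - \lpopt$ with a generator is legitimate for $\pointcone(\pointset,\rayset)$, whose apex is $\lpopt$, but not for a general polyhedron $P^2$.) You flag this as ``the delicate step'' but do not close it, and no appeal to the full-edge hypothesis rescues the decomposition in this form. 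The repair is to bypass the points entirely: decompose $r \in \mathrm{rec}(P^1) \subseteq \mathrm{rec}(P^2)$ over the extreme ray directions $\rho$ of $P^2$ and show $\alpha^\T \rho \ge 0$ for each of these. Either $\rho \in \rayset^2$, and this is one of the constraints you assumed, or the corresponding edge meets $\bd{S}$, in which case (for the split sets $S = \Sk$ used throughout the paper) its tail eventually lies in $P^2 \setminus \interior{S}$, and the same limiting argument as in your exterior case yields $\alpha^\T \rho \ge 0$. With that substitution the ray case closes.
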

\begin{proof}
  Since $P^1 \subseteq P^2$, we have 
    $
      P^1 \cap \bd{S} \subseteq P^2 \cap \bd{S} \subseteq P^2 \setminus \interior S.
    $
  By Lemma~\ref{lem:valid_inequalities}, 
    $P^2 \setminus \interior S \subseteq \{x : \alpha^\T x \ge \beta\}$,
  which completes the proof.
\end{proof}

Next, we remark on one subtlety of Algorithm~\ref{alg:RHA1}, in step~\ref{step:RHA1:act-hplane}.
Namely, whenever the hyperplane being activated intersects a ray of $\optcone$ outside of $\interior{S}$, the algorithm skips that ray.
In Lemma~\ref{lem:redundantPoints}, we show this is valid because such intersections would lead to
only \textit{redundant} intersection points, in the sense that these points will never be part of a valid inequality that cuts away $\lpopt$.
We show the result only for points,
as rays are never redundant.
Recall that $\mathcal{K}$ denotes the skeleton of the polyhedron $C$,
$\mathcal{K}'$ denotes the connected component of $\mathcal{K} \cap \interior S$ that includes $\lpopt$,
and $\mathcal{K}''$ denotes the union of the other components of $\mathcal{K} \cap \interior S$.
Let $\pointset'$ and $\pointset''$ denote the intersection points between $\bd S$ 
and the edges of the closure of $\mathcal{K}'$ and $\mathcal{K}''$, respectively.

\begin{lemma} \label{lem:redundantPoints}
  Any intersection points in 
    $\pointset'' \setminus \pointset'$
  are redundant.
\end{lemma}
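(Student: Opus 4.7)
The plan is to prove the statement by contradiction, adapting the simplex-style argument from the proof of Lemma~\ref{lem:valid_inequalities}. Suppose that some $p \in \pointset'' \setminus \pointset'$ is not redundant; that is, there exists $(\alpha,\beta)$ feasible to $\AlphaSystem(\beta,\pointset\setminus\{p\},\rayset)$ with $\alpha^\T \lpopt < \beta$ and $\alpha^\T p < \beta$. Setting $F := \{x \in C : \alpha^\T x < \beta\}$, we have $\lpopt, p \in F$, so $F \cap \bd S$ contains $p$ and is nonempty.

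Starting from $\lpopt$, I would initiate the simplex-style walk on $C$ with objective $\alpha$, repeatedly traversing an edge of $C$ that strictly decreases $\alpha$ by the steepest edge rule. The key invariant to establish is that every vertex visited by the walk lies in $\mathcal{K}'$, provided the walk has not yet crossed $\bd S$. This holds because $\lpopt \in \mathcal{K}'$ and any edge of $C$ joining two vertices in $\interior S$ is itself contained in $\interior S$ by convexity of $S$; hence the walk cannot escape the component $\mathcal{K}'$ without first crossing $\bd S$.

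The walk then terminates in one of three ways. If an edge of $C$ crosses $\bd S$ before reaching its next vertex, the resulting intersection point $p^*$ lies in $\pointset'$ (since the edge originates at a vertex in $\mathcal{K}'$), and by monotonicity $\alpha^\T p^* \le \alpha^\T \lpopt < \beta$, contradicting the validity of $\alpha$ on $\pointset' \subseteq \pointset \setminus \{p\}$. If instead the walk follows an unbounded edge that remains in $\interior S$, the corresponding ray $r^* \in \rayset$ satisfies $\alpha^\T r^* < 0$, again contradicting validity. Otherwise, the walk terminates at an interior vertex $v^* \in \mathcal{K}'$ with no strictly decreasing edge, hence a global minimum of $\alpha$ on $C$.

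The main obstacle is this last, degenerate case. To resolve it I would invoke the LP argument of Lemma~\ref{lem:valid_inequalities} applied to $\min\{\alpha^\T x : x \in \cl F \cap \bd S\}$: every vertex of $\cl F \cap \bd S$ lying in $\pointset \setminus \{p\}$ satisfies $\alpha = \beta$ (being in both $\cl F$ and the validity system), while $p$ itself has $\alpha^\T p < \beta$. Hence the LP is either unbounded---yielding a recession direction which is a ray of $C$ tangent to $\bd S$, thus in $\rayset$, with $\alpha^\T r < 0$, the desired contradiction---or bounded with minimum at $p$. To settle the bounded subcase I would exploit the disconnectedness between $v^* \in \mathcal{K}'$ and the component of $\mathcal{K} \cap \interior S$ containing $p$'s originating vertex, combined with the tangent-cone structure of $C$ at $v^*$ and the minimality of $\alpha^\T v^*$, to extract either a flat edge from $v^*$ leading to a point of $\pointset'$ with $\alpha < \beta$ or a ray of $\rayset$ originating in $\mathcal{K}'$ with $\alpha < 0$. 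Pinning down this combinatorial step precisely is what I expect to require the most care.
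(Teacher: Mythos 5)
There is a genuine gap, and you correctly identify where it sits: the case in which your downhill walk from $\lpopt$ terminates at a vertex $v^* \in \mathcal{K}'$ minimizing $\alpha^\T x$ over $C$ without ever meeting $\bd{S}$ or an unbounded edge. Your sketch for resolving it (``exploit the disconnectedness \ldots combined with the tangent-cone structure \ldots to extract either a flat edge \ldots or a ray'') is not an argument, and the difficulty is not merely technical: a walk that uses only the hypothesis that $\lpopt$ is cut cannot succeed, because nothing forces a descent path started at $\lpopt$ to leave $\interior{S}$ before reaching the global minimum. The information you are not exploiting is that the violating point $w \in \pointset'' \setminus \pointset'$ is \emph{also} cut and lies in a \emph{different} connected component of $\mathcal{K} \cap \interior{S}$. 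The paper's proof is built on exactly this: since $F = \{x \in C : \alpha^\T x < \beta\}$ is convex and contains both $\lpopt$ and $w$, there is a path along the skeleton of $C$ from $\lpopt$ to $w$ that stays inside $F$; since $\lpopt \in \mathcal{K}'$ while $w$ comes from $\mathcal{K}''$, any such skeleton path must cross $\bd{S}$, and the crossing out of $\mathcal{K}'$ is by definition a point of $\pointset'$, which is therefore cut --- contradicting feasibility for $\AlphaSystem(\beta,\pointset',\rayset)$. In short, the correct object for the simplex-type connectivity argument is a path joining the \emph{two} violated points inside the violated halfspace, not a descent path from $\lpopt$ alone; your cases (1) and (2) mirror Lemma~\ref{lem:valid_inequalities} and are fine, but they only cover situations where the descent happens to be forced out of $\interior{S}$.

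A secondary point: you formulate redundancy of $p$ as implication by $\AlphaSystem(\beta,\pointset\setminus\{p\},\rayset)$. To justify discarding \emph{all} of $\pointset''\setminus\pointset'$ simultaneously (which is how the lemma is invoked in the proof of Theorem~\ref{thm:RHA_valid}), one should show that each such constraint is already implied by $\AlphaSystem(\beta,\pointset',\rayset)$ together with $\alpha^\T\lpopt<\beta$, as the paper does. Fortunately, the contradiction you aim for only ever touches $\pointset'$ and $\rayset$, so a completed version of your argument would deliver this stronger statement anyway; just state the hypothesis that way from the outset.
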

\begin{proof}
  Let $\alpha$ be any feasible solution to $\CutRegion(\beta,\pointset',\rayset)$ such that
  ${\alpha}^\T \lpopt < \beta$.
  Suppose for the sake of contradiction that a point $w \in \pointset'' \setminus \pointset'$ violates the inequality.
  Since $w \in \mathcal{K}''$ and $\lpopt \in \mathcal{K}'$,
  any path from $\lpopt$ to $w$ along the skeleton of $C$
  must intersect $\bd{S}$ at some point in $\mathcal{P}'$.
  Since $C$ is convex and both $\lpopt$ and $w$ are cut,
  there must exist such a path entirely in $\{x : \alpha^\T x < \beta\}$,
  implying some intersection point in $\pointset'$ is cut, a contradiction.
\end{proof}

We now use these results to prove Theorem~\ref{thm:RHA_valid}.

\begin{proof}[Proof of Theorem~\ref{thm:RHA_valid}]
  Let $(\pointset^*,\rayset^*)$ be the point-ray collection that Algorithm~\ref{alg:RHA1} outputs.
  Let $\pointcone^* := \pointcone(\pointset^*,\rayset^*)$.  
  Observe that intersecting the edges of $\pointcone^*$ with $\bd{S}$ yields exactly the same point-ray collection $(\pointset^*,\rayset^*)$.
  Then, using Lemma~\ref{lem:valid_inequalities} with $\pointcone^*$, 
  it follows that all obtainable cuts will be valid for $\pointcone^* \setminus \interior S$.
  To prove the theorem, it suffices to show the inclusion 
    $
      P_I \subseteq \pointcone^*,
    $
  which we do next.
  
  From Lemma~\ref{lem:PinCbar}, we have that $P_I \subseteq \pointcone := \pointcone(\pointset,\rayset)$,
  where $(\pointset,\rayset)$ is the proper point-ray collection given as an input to the algorithm.
  We need to show that the point-ray collection after activating $H_h$ is proper.
  Let $\pointset^c$ and $\rayset^c$ denote the set of points and rays
  from $(\pointset,\rayset)$ that violate $H_h^+$.

  First we consider intersection points that lie on $H_h$.
  Let $P^1 := \pointcone \cap H_h$ and $P^2 := \optcone \cap H_h$,
  with $\pointset^i$ and $\rayset^i$ the corresponding intersection points and rays
  of $P^i$ with $\bd{S}$, for $i \in \{1,2\}$.
  Since $\pointset$ and $\rayset$ are contained in $\optcone$,
  $\pointcone \subseteq \optcone$.
  As a result, we can apply Lemma~\ref{lemma:relaxation_activation},
  which implies that every $\alpha$
  (for a $\beta$ such that $\alpha^\T \lpopt < \beta$) that is feasible to
    $\AlphaSystem(\beta, \pointset^2, \rayset^2)$
  will also be feasible to
    $\AlphaSystem(\beta, \pointset^1, \rayset^1)$.

  Now we consider the entire halfspace $H_h^+$. 
  The set of intersection points of $\pointcone \cap H_h^+$ with $\bd{S}$ is 
    $\overline\pointset := \pointset^1 \cup \pointset \setminus \pointset^c$,
  and the set of rays is 
    $\overline\rayset := \rayset^1 \cup \rayset \setminus \rayset^c$.
  In addition, we have that
    $\pointset^* = \pointset^2 \cup \pointset \setminus \pointset^c$,
  and 
    $\rayset^* = \rayset^2 \cup \rayset \setminus \rayset^c$.
  Here we are assuming that all of the points in $\pointset^2$ belong to the same part of the skeleton of $\optcone \cap H_h^+$ that contains $\lpopt$.
  This is without loss of generality as a result of Lemma~\ref{lem:redundantPoints}.
  
  Hence, for any $\beta$, 
  suppose $\alpha$ is feasible to 
    $\AlphaSystem(\beta,\allowbreak \pointset^*,\allowbreak \rayset^*)$
  and satisfies $\alpha^\T \lpopt < \beta$.
  Then
  $\alpha$ will also be feasible to
    $\AlphaSystem(\beta,\overline\pointset,\overline\rayset)$.
  By Lemma~\ref{lemma:pointset_inclusion_pointcone},
    $\pointcone(\overline\pointset,\overline\rayset) \subseteq \pointcone^*$.
  Moreover, since
    $P_I \subseteq \pointcone$
  and $P_I \subseteq H_h^+$, we have that
    $P_I \subseteq \pointcone \cap H_h^+$.
  The intersection points and rays obtained from intersecting $\pointcone(\overline\pointset,\overline\rayset)$
  with $\bd{S}$ are precisely $\overline\pointset$ and $\overline\rayset$.
  As a result, by Lemma~\ref{lem:PinCbar},
    $P_I \subseteq \pointcone(\overline\pointset,\overline\rayset)$.
  It follows that $P_I \subseteq \pointcone^*$, as desired.
\end{proof}

Performing Algorithm~\ref{alg:RHA1} $k_h$ times is dramatically more efficient than the 
$O(n^{k_h})$ complexity of full hyperplane activation.
For every activation using Algorithm~\ref{alg:RHA1},
letting $k_r := \card{\rayset_A}$, the hyperplane $H_h$ can cut $O(k_r)$ rays to create $O(k_r^2)$ new intersection points.
Therefore, if $k_h$ hyperplanes are activated, 
the number of intersection points generated by partial activation is $O(k_r^2 k_h)$. 

In the presence of rays in the initial point-ray collection,
we next describe the role of the parameter $\rayset_A$ in {\PHAOne},
which restricts the set of rays from $\optcone$ that can be cut
by any activated hyperplane.
Edges of $C$ that do not intersect $\bd S$ can be viewed as intersection points infinitely far away, 
and \citet{BalMar13} show that this view suffices when extending $\AlphaBetaSystemPR$ 
with only points to a valid system with both points and rays. 
However, when generating GICs, rays play a fundamentally different role than points. 
  In particular,
  when activating a new hyperplane cuts some $p \in \initpointset$, 
  the new intersection points that are created are either on the SIC or are deeper relative to $p$
  (see Theorem~3 in \cite{BalMar13}).
  In contrast, any intersection points created as a result of cutting a ray from $\initrayset$
  will all lie on the SIC, as shown in Proposition \ref{prop:parallel-rays}.
\begin{proposition}
\label{prop:parallel-rays}
  Suppose $r \in \initrayset$ is a ray that does not intersect $\bd S$, and
  $H$ is an activated hyperplane that intersects $r$ at $v := r \cap H$.
  Denote by $Y$ the set of new rays on $H$ that originate from $v$. 
  Then the subset of rays $Y'$ in $Y$ that intersect $\bd S$ create intersections points on the 
  hyperplane $\initcutcoeff^\T x = \initcutRHS$, where $\initcut$ is the SIC from $\optcone$ and $S$.
  The rays $r'$ in $Y \setminus Y'$ satisfy $\initcutcoeff^\T r' = 0$.
\end{proposition}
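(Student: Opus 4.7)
The plan is to compute the new rays in $Y$ explicitly in each $2$-dimensional face of $\optcone$ that contains $r$, and then verify both claims using the defining relations of the SIC hyperplane: $\initcutcoeff^\T r' = 0$ for every $r' \in \initrayset$ and $\initcutcoeff^\T p' = \initcutRHS$ for every $p' \in \initpointset$.

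First, write $v = \lpopt + t^{*} r$ with $t^{*} > 0$, and let $H = \{x : a^\T x = b\}$ with $H^{+} = \{x : a^\T x \ge b\}$ valid for $P$. Since $v \neq \lpopt$ lies on both $r$ and $H$, one easily derives $a^\T r < 0$. For each $r^j \in \optconerays \setminus \{r\}$, solving the tangency equation along the face $\cone(r, r^j)$ shows that the new edge from $v$ lying on $H$ is parallel to
\[
d^j := r^j - (a^\T r^j / a^\T r)\, r = r^j + \mu_j\, r, \qquad \mu_j := -a^\T r^j / a^\T r.
\]
This edge is an unbounded ray (hence contributes to $Y$) precisely when $a^\T r^j \ge 0$, in which case $\mu_j \ge 0$. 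Because $\initcutcoeff^\T r = 0$, expanding yields the key identities $\initcutcoeff^\T v = \initcutcoeff^\T \lpopt$ and $\initcutcoeff^\T d^j = \initcutcoeff^\T r^j$.

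I would then split on whether $r^j \in \initrayset$. If so, $\initcutcoeff^\T d^j = 0$; moreover $r$ and $r^j$ both lie in the recession cone of $S$ (neither exits $S$ starting from $\lpopt \in \interior S$), hence so does the nonnegative combination $d^j$, and $v + \tau d^j \in S$ for all $\tau \ge 0$. Consequently $d^j \in Y \setminus Y'$, which gives the second claim. Otherwise $r^j$ yields $p^j = \lpopt + t^j r^j \in \bd S$, so $\initcutcoeff^\T r^j = (\initcutRHS - \initcutcoeff^\T \lpopt)/t^j$. I would argue that the ray $\{v + \tau d^j : \tau \ge 0\}$ crosses $\bd S$ precisely at $\tau = t^j$, at which point
\[
\initcutcoeff^\T(v + t^j d^j) = \initcutcoeff^\T \lpopt + t^j \initcutcoeff^\T r^j = \initcutRHS,
\]
placing the new intersection point on the SIC hyperplane, as required.

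The hardest step is justifying $\tau = t^j$. For the simple split sets $S_k$ that the paper focuses on, this is transparent: $\bd S_k$ is the union of the two hyperplanes $\{x : x_k = \floor{\lpopt_k}\}$ and $\{x : x_k = \ceil{\lpopt_k}\}$, and $r \in \initrayset$ forces $(r)_k = 0$, so $(d^j)_k = (r^j)_k$ and the exit time from $v$ along $d^j$ equals the exit time from $\lpopt$ along $r^j$. More generally, the same conclusion goes through whenever $\bd S$ restricted to $\cone(r, r^j)$ is translation-invariant along $r$ (equivalently, the facet of $S$ intersected by $r^j$ contains $r$ in its lineality space), which is the regime implicit in the applications in the paper.
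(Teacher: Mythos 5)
Your proof is correct and follows essentially the same route as the paper's: work in the two-dimensional face of $\optcone$ spanned by $r$ and a companion ray $r^j$, write the new edge direction as a conic combination $r^j + \mu_j r$ with $\mu_j \ge 0$, and exploit $\initcutcoeff^\T r = 0$ together with the defining relations of the SIC, with the same case split on whether $r^j \in \initrayset$. The one place you go beyond the paper is precisely the step you flag as hardest: the paper simply asserts that the half-line $\widehat p + \theta r$ stays in $\bd S$ (equivalently, that the exit parameter along the new edge equals $t^j$), whereas you correctly observe that this needs $\bd S$ to be translation-invariant along $r$ --- true for the split sets $S_k$ used throughout the paper, but not for an arbitrary convex $P_I$-free set --- so your explicit verification for $S_k$ is a worthwhile tightening.
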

\begin{proof}
  Let $\initrayset^c \subseteq \initrayset$ denote the set of initial rays cut by $H$ before $\bd S$.
  When ray $r$ intersects $H$, it creates $n - 1 - \abs{\initrayset^c}$ new rays emanating from $v$,
  which we denote by $Y$.
  There are $n-1$ hyperplanes that define $r$.
  Any ray $r' \in Y$ lies on $n-2$ of these hyperplanes, as well as on $H$.
  There exists a ray $\widehat r \in \initrayset \setminus \initrayset^c$ that also lies on these $n-2$ facets,
  and $r'$ can be written as a nontrivial conic combination of $r$ and $\widehat r$.
  Let $F$ denote the two-dimensional face of $\optcone$ 
  defined by all conic combinations of $r$ and $\widehat r$.
  
  Suppose $\widehat r$ intersects $\bd S$ and let $\widehat p := \widehat r \cap \bd S$.
  By the remark above,
  the half-line 
  $L := \widehat p + \theta r$, $\theta \ge 0$ is contained in 
    $\{x: \initcutcoeff^\T x = \initcutRHS\} \cap \bd S$,
  since $r \in \initrayset$ does not intersect $\bd S$.
  Further, $L$ is also contained in $F$, which is parallel to $r$, $\widehat r$, and $r'$.
  Since $r'$ is a nontrivial conic combination of $\widehat r$ and $r$, it intersects $\bd S$.
  The resulting intersection point is on $L$, which lies on the SIC. 
  
  Now suppose $\widehat r$ does not intersect $\bd S$.
  Then $\initcutcoeff^\T r = \initcutcoeff^\T \widehat r = 0$.
  Since $r'$ is a nontrivial conic combination of $r$ and $\widehat r$, it too satisfies 
    $\initcutcoeff^\T r' = 0.$ 
\end{proof}

The consequence of Proposition~\ref{prop:parallel-rays} is that activating hyperplanes
on rays of $\optcone$ that do not intersect $\bd{S}$ will create weak points,
as these points actually lie on the SIC, not deeper as desired.
Thus, the input $\rayset_A$ to Algorithm~\ref{alg:RHA1} will, in our implementation, always be chosen from the set of rays of $\optcone$  that intersect $\bd{S}$, i.e., from $\optconerays \setminus \initrayset$.
Choosing $\rayset_A$ arbitrarily may in general lead to invalid cuts;
the validity of our choice follows from the results in Section~\ref{sec:tilting}.
The full details of how we implement {\PHAOne} to generate cuts are given in Algorithm~\ref{alg:PHA}.

\begin{algorithm}[ht!]
\caption{Generalized Intersections Cuts by {\PHAOne}}
\label{alg:PHA}
\begin{algorithmic}[1]
\Input Polyhedron $P$ defined by a set of hyperplanes $\hplaneset$;
  a vertex $\lpopt$ of $P$;
  indices of fractional integer variables $\sigma$;
  hyperplane selection criterion $\mathcal{SC}$;
 objectives $\mathcal{O}$;
 number hyperplanes $k_h$.
\Function{{\PHAOne}CutGenerator}{$P, \lpopt, \sigma, \mathcal{SC}, \mathcal{O}, k_h$} 
  \State
    $\mathcal{C} \gets \emptyset$.
  \For{$k \in \sigma$}
  	\State
		$S_k \gets \{x \suchthat \floor{\lpopt_k} \le x_k \le \ceil{\lpopt_k}\}$.
    \State
      $\rayset_k^{\parallel} \gets \{r \in \optconerays \suchthat \distToHplane{\bd{S_k}}{r} = \infty\}$;
      $\rayset_k \gets \rayset_k^{\parallel}$;
      $\pointset_k \gets \{r \cap \bd{S_k} \suchthat r \in \optconerays \setminus \rayset_k\}$.
  \EndFor
  \For{$h \in \{1,\ldots,k_h\}$} \label{step:PHA:RHA-start}
    \For{$k \in \sigma$}
      \State
      \label{step:PHA:RHA-choose-hplane}
        Choose a hyperplane $H$ to activate according to selection criteria $\mathcal{SC}$.
      \State \label{step:PHA:RHA-act}
        $(\pointset_k, \rayset_k) \gets \text{\PHAOne}(P, S_k, H, \optconerays \setminus \rayset_k^{\parallel}, (\pointset_k,\rayset_k))$.
    \EndFor
    \State \label{step:PHA:RHA-end}
        Add to $\mathcal{C}$ valid cuts by
        solving \eqref{CutLP}
        with objective types $\mathcal{O}$, always ensuring $\lpopt$ is cut.
  \EndFor
  \State\Return $\mathcal{C}$.
\EndFunction
\end{algorithmic}
\end{algorithm}

\section{Partial hyperplane activation with tilted hyperplanes}
\label{sec:tilting}

An outcome of proving the validity of \PHAOne{} is that the hyperplane given as an input to Algorithm~\ref{alg:RHA1} can be an arbitrary valid hyperplane for $P$, not necessarily one from the hyperplane description of $P$.
One application of this insight
is to choose hyperplanes that only cut rays of $\optcone$ that intersect $\bd{S}$,
to avoid the weak intersection points that would otherwise be created as shown in Proposition~\ref{prop:parallel-rays}.
However, it is not practical to search for arbitrary valid hyperplanes that satisfy specific desired properties such as which particular rays of $\optcone$ are or are not cut.
Instead, we propose to activate the hyperplanes defining $P$, but only on a subset of the rays that they intersect.
The validity of this follows from a geometric argument, in which Algorithm~\ref{alg:RHA1} is applied to a \emph{tilted} hyperplane.
Crucially, we show that we never need to explicitly compute this tilted hyperplane;
rather, the activation can be computed using the original non-tilted hyperplane, and therefore without incurring any additional computational burden.
A byproduct of the tilting theory is that choosing $\rayset_A$ as in Algorithm~\ref{alg:PHA} is valid.

Using the concept of tilting, in Algorithm~\ref{alg:targeted-tilting}, we provide an alternative, but not mutually exclusive, approach to Algorithm~\ref{alg:PHA} for using {\PHAOne}.
This algorithm uses tilting to reduce the size of the point-ray collection when the number of rays of $\optcone$ that are cut is large, which is desirable when seeking cuts stronger than the SIC.
When cutting $k_r$ rays and activating $k_h$ hyperplanes, 
the algorithm creates $O(k_r k_h^2)$ points and rays, which has a clear advantage to the $O(k_r^2 k_h)$ points and rays that are created from Algorithm~\ref{alg:PHA} when $k_r$ is large.
In our implementation in Section~\ref{sec:computation}, we compare the two ideas when used individually and together.

 \begin{figure}
 \centering
    \begin{tikzpicture}[line join=round,x={(1 cm, 0 cm)},y={(0 cm, 2 cm)},z={(-0.5 cm,-0.66 cm)},>=stealth,scale=2] 
     \coordinate (barx) at (1/4,1/2,1/4); 

     \coordinate (p1) at (1,0,0);
     \coordinate (p2) at (0,0,1);
     \coordinate (p3) at (0,0,0);
     
     \coordinate (p4) at (3/4,0,0);
     \coordinate (p5) at (0,0,3/4);
     \coordinate (p6) at (9/22,0,9/22);
     
     \coordinate (p7) at (0,0,9/10);
     \coordinate (p8) at (9/10,0,0);
     
     \coordinate (p9) at (3/7,0,3/7);
     
     \coordinate (v1) at (3/4,1/6,1/12);
     \coordinate (v2) at (1/12,1/6,3/4);
     \coordinate (v3) at (7/16,1/8,7/16);
     \coordinate (v4) at (1/28,1/14,25/28);
     \coordinate (v5) at (25/28,1/14,1/28);
     \coordinate (v6) at (9/20,1/10,9/20);

		\draw[axis] (p3) -- (xyz cs:x=1.3) node[label={[label distance=-4pt]90:\scriptsize $x_2$}] {};
      	\draw[axis] (xyz cs:y=-0) -- (xyz cs:y=0.5) node[label={[label distance=-6pt]-135:\scriptsize $x_3$}] {};
		\draw[axis] (p3) -- (xyz cs:z=1.3) node[label={[label distance=-2pt]0:\scriptsize $x_1$}] {};
      
      \draw [polyhedron_edge] (v2) -- (barx) node [very near start,fill=none,left=0pt] {};
      \draw [polyhedron_edge,dashed] (barx) -- (p3) node [very near start,fill=none,left=0pt] {};
      \draw [polyhedron_edge,dashed] (p4) -- (p8) node [very near start,fill=none,left=0pt] {};
      \draw [polyhedron_edge] (barx) -- (p1) node [very near start,fill=none,left=0pt] {};
      
      \draw [polyhedron_edge,red,dashed] (p6) -- (p4) -- (v1) -- (v3) -- cycle;
      \draw [polyhedron_edge,red,dashed] (p5) -- (v2) -- (v3) -- (p6) -- cycle;
       
     
      \draw [polyhedron_edge,line width=1.5pt,orange] (v2) -- (p5) -- (p1) -- cycle;
            
       \draw [polyhedron_edge,dotted] (v2) -- (p2) node [very near start,fill=none,left=0pt] {};
       \draw [polyhedron_edge,dotted] (p1) -- (p2) node [very near start,fill=none,left=0pt] {};
        \draw [polyhedron_edge,dotted] (p8) -- (p1) node [very near start,fill=none,left=0pt] {};
        \draw [polyhedron_edge,dotted] (p5) -- (p2) node [very near start,fill=none,left=0pt] {};
        
       \node [draw, point, label={[label distance=-2pt]45:\normalsize $\bar x$}] at (barx) {};
       \node [draw, point, fill=black] at (v2) {};
    
     \node [draw, intersection_point, fill=blue] at (p3) {};
     \node [draw, intersection_point, fill=blue] at (p1) {};
     \node [draw, intersection_point, fill=blue] at (p5) {};
     
     \node [draw, intersection_point, fill=none] at (p2) {};
     \node [draw, cross=3pt, red] at (p2) {};
    \end{tikzpicture}
    \qquad\qquad
    \begin{tikzpicture}[line join=round,x={(1 cm, 0 cm)},y={(0 cm, 2 cm)},z={(-0.5 cm,-0.66 cm)},>=stealth,scale=2] 
      \draw[axis] (p3) -- (xyz cs:x=1.3) node[label={[label distance=-4pt]90:\scriptsize $x_2$}] {};
      \draw[axis] (xyz cs:y=-0) -- (xyz cs:y=0.5) node[label={[label distance=-6pt]-135:\scriptsize $x_3$}] {};
      \draw[axis] (p3) -- (xyz cs:z=1.3) node[label={[label distance=-2pt]0:\scriptsize $x_1$}] {};
      
      \draw [polyhedron_edge,black] (v4) -- (barx);
      \draw [polyhedron_edge,dashed] (barx) -- (p3);
      \draw [polyhedron_edge] (barx) -- (v1);
      
      \draw [polyhedron_edge,red,dashed] (p5) -- (v2) -- (v3) -- (p6) -- cycle;

      \draw [polyhedron_edge,line width=1.5pt,orange] (p7) -- (v4) -- (v1) -- (p4) -- cycle;
      
       \draw [polyhedron_edge,dotted] (v2) -- (p2) ;
       \draw [polyhedron_edge,dotted] (v1) -- (p1) ;
       \draw [polyhedron_edge,dotted] (p1) -- (p2) ;
        \draw [polyhedron_edge,dotted] (p4) -- (p1) ;
        \draw [polyhedron_edge,dotted] (p5) -- (p2) ;
        
      \node [draw, point, label={[label distance=-2pt]45:\normalsize $\bar x$}] at (barx) {};
       
      \node [draw, point] at (v1) {};
      \node [draw, point] at (v4) {};
     \node [draw, intersection_point, fill=blue] at (p3) {};
     \node [draw, intersection_point, fill=blue] at (p7) {};
     \node [draw, intersection_point, fill=blue] at (p4) {};
     \node [draw, intersection_point, fill=blue] at (p5) {};
     
     \node [draw, intersection_point, fill=none] at (p1) {};
     \node [draw, intersection_point, fill=none] at (p2) {};
     \node [draw, cross=3pt, red] at (p1) {};
     \node [draw, cross=3pt, red] at (p2) {};
    \end{tikzpicture}
    \caption{Tilted activations of the two hyperplanes not defining $\optcone$ from Figure~\ref{fig:partial-activation}.}
    \label{fig:tilted-activation}
  \end{figure}
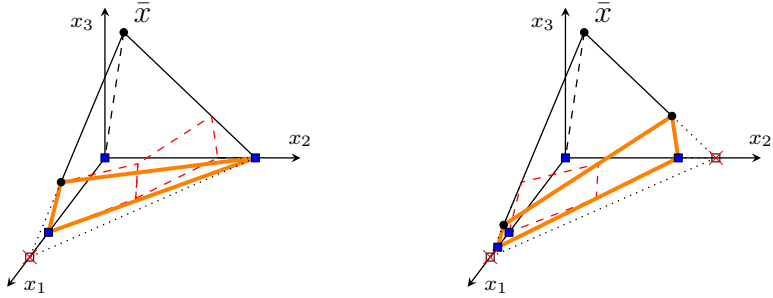

The essential motivation for this section is that we wish to activate a given hyperplane using a {subset} of the rays of $\optcone$ that the hyperplane intersects, while ignoring its intersections with the remaining rays.
However, not picking this subset with care can easily result in invalid cuts.
We will show that a simple rule suffices to guarantee validity: if a ray $r \in \optconerays$ has been previously cut and a hyperplane being activated intersects $r$, then $r$ must be included in the rays that are cut when performing the activation.
Our idea is illustrated in Figure~\ref{fig:tilted-activation} on the same example as in Figure~\ref{fig:partial-activation}.
The panel on the left shows a partial activation of $H_4$, where we choose to intersect the hyperplane with ray $r^1$ of $\optcone$, but not ray $r^2$.
This involves only adding the intersection point obtained from intersecting $H_4$ with $r^1$, while keeping the original intersection point from $r^2 \cap \bd{S}$ intact.
The picture illustrates how this corresponds to a tilting of $H_4$.
In the next panel, we activate hyperplane $H_5$.
Though we are not obliged to, we select to cut ray $r^2$.
However, because $r^1$ was previously cut, our rule states that we \emph{must} cut the ray $r^1$ when activating $H_5$.
Ignoring that ray (i.e., not adding the intersection point on the $x_1$ axis) results in an improper point-ray collection, as shown in Appendix~\ref{app:PHA}.

We now describe the tilting operation formally.
Let $H$ be a hyperplane defining $P$.
We assume that $H$ is not tight at $\lpopt$ for ease of exposition; the degenerate case follows similar reasoning and is handled explicitly in Appendix~\ref{app:tilting}.

The hyperplane $H$ can be uniquely defined by the $n$ affinely independent points or rays obtained by its intersection with $\optcone$.
For $r \in \optconerays$, define $d(H,r) := \min\{\allowbreak\distToHplane{H}{r},\allowbreak-\distToHplane{H}{-r}\}$ as the distance along $r$ (or $-r$) to $H$.
Let $v(H,r)$ denote the intersection of $r$ (or $-r$) with $H$:
  \[
    v(H,r) := 
    		\begin{cases}
      		\lpopt + d(H,r) \cdot r & \text{if $d(H,r) \ne \infty$,} \\
			r & \text{otherwise.} \\
    		\end{cases}
  \]
Then the affine hull of $\{v(H,r) \suchthat r \in \optconerays\}$ is precisely $H$.
The affine independence of these points and rays is a direct result of the affine independence of the rays of $\optcone$.

To tilt $H$, we need only to change some of these points;
there are many ways to tilt, so we refer to our particular method as \emph{targeted tilting}.
As before, let $\raysIntByH(H)$ denote the rays of $\optcone$ that are intersected by $H$ before $\bd{S}$, i.e., those rays $r \in \optconerays$ for which $\distToHplane{H}{r} < \distToHplane{\bd{S}}{r}$.
Let $\rays' \subseteq \raysIntByH(H)$ be an arbitrary subset of these rays that we select to be cut.
Overloading notation again, we will use $v(\bd{S},r)$ to denote the intersection point of ray $r \in \optconerays$ with $\bd{S}$, where $v(\bd{S},r) = r$ if the ray does not intersect $\bd{S}$.
The \emph{targeted tilting} of $H$ (with respect to $\rays'$) is defined as the unique hyperplane $\tilted{H}$ going through 
	$v(\bd{S},r)$ for all $r \in \raysIntByH(H) \setminus \rays'$,
and
	$v(H,r)$ otherwise.

The first thing we observe is that $\tilted{H}^+$ is valid for $P$, as it is a relaxation of $H^+$. 
This immediately implies, by Theorem~\ref{thm:RHA_valid}, that we can activate $\tilted{H}$ using Algorithm~\ref{alg:RHA1}; i.e., given a proper point-ray collection $(\pointset,\rayset)$, the point-ray collection outputted by $\text{\PHAOne}(P,S,\tilted{H},\optconerays,(\pointset,\rayset))$ is proper.
This requires knowing $\tilted{H}$ explicitly.
We next show that computing $\tilted{H}$ is actually unnecessary:
we can calculate exactly the same points and rays via $\text{\PHAOne}(P,S,H,\rays',(\pointset,\rayset))$.

\begin{theorem}
\label{thm:compute-tilted-points}
  Let $H$ be a hyperplane defining $P$ and $\rays^c \subseteq \optconerays$
  the set of rays previously cut,
  $\tilted{H}$ be a hyperplane obtained via a targeted tilting of $H$ on a set of rays $\rays' \subseteq \raysIntByH(H)$ such that $\rays^c \cap \raysIntByH(H) \subseteq \rays'$.
  Given a proper point-ray collection $(\pointset,\rayset)$,
  the point-ray collection returned by
    $\text{\PHAOne}(P, S, H, \rays', (\pointset,\rayset))$
  is proper.
\end{theorem}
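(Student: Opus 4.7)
The plan is to reduce the theorem to Theorem~\ref{thm:RHA_valid} by showing that the two calls $\text{\PHAOne}(P,S,H,\rays',(\pointset,\rayset))$ and $\text{\PHAOne}(P,S,\tilted{H},\optconerays,(\pointset,\rayset))$ return the same point-ray collection. Because $\tilted{H}^+$ is a relaxation of $H^+$ (as remarked just before the theorem) and hence valid for $P$, Theorem~\ref{thm:RHA_valid} applied to the second call guarantees its output is proper, and the equality transfers properness to the first call.

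To establish the equality, I would trace through Algorithm~\ref{alg:RHA1} for both inputs and check three correspondences, organized around the partition $\rays' \cup (\raysIntByH(H) \setminus \rays') \cup (\optconerays \setminus \raysIntByH(H))$ of $\optconerays$ on which the definition of $\tilted{H}$ hinges. First, a case analysis using the condition $\distToHplane{\tilted{H}}{r} < \distToHplane{\bd{S}}{r}$ shows that the set of rays processed in both calls equals $\rays'$, and on each such ray the newly created vertex $r \cap H = r \cap \tilted{H}$ is the same. Second, the elements of $(\pointset,\rayset)$ flagged for removal coincide. The inclusion $\tilted{H}^+ \supseteq H^+$ makes any violation of $\tilted{H}^+$ also a violation of $H^+$, while the reverse mismatch---an element lying in the slab $H^- \cap \tilted{H}^+$---could occur only in the interior of a two-dimensional face $F = \text{cone}(r, r'')$ of $\optcone$ with $r \in \rays'$ and $r'' \in \raysIntByH(H) \setminus \rays'$, since these are the only faces where $H \cap F$ and $\tilted{H} \cap F$ diverge. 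A short computation in $F$-coordinates, using that $\tilted{H} \cap F$ passes through the axis points at distances $\distToHplane{H}{r}$ and $\distToHplane{\bd{S}}{r''}$ together with the inequality $\distToHplane{H}{r} < \distToHplane{\bd{S}}{r}$, shows every $p \in \bd{S} \cap F$ in the interior of $F$ lies strictly on the $\lpopt$-far side of $\tilted{H}$, so in fact $p \in \tilted{H}^-$, ruling out the mismatch. The preservation exception on rays of $\optconerays \setminus \rays'$ explicit in the non-tilted call is matched implicitly in the tilted call via the bound $\distToHplane{\tilted{H}}{r^*} \geq \distToHplane{\bd{S}}{r^*}$ for $r^* \notin \rays'$.

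Third, the new points and rays produced at each freshly created vertex $v$ coincide. For another ray $r' \in \optconerays$, the lines $H \cap F$ and $\tilted{H} \cap F$ in the face $F = \text{cone}(r, r')$ coincide whenever $r' \in \rays' \cup (\optconerays \setminus \raysIntByH(H))$, yielding identical new edges and hence identical additions. For $r' \in \raysIntByH(H) \setminus \rays'$, the tilted edge terminates at $r' \cap \bd{S}$, and the hypothesis $\rays^c \cap \raysIntByH(H) \subseteq \rays'$ guarantees $r' \notin \rays^c$, so $r' \cap \bd{S}$ is still present in $(\pointset,\rayset)$ and re-adding it is a no-op. The main obstacle is the $F$-coordinate computation in the removal step, which confirms that the tilted hyperplane, although strictly relaxed from $H$, does not spuriously preserve a previously added boundary point that the non-tilted call discards; the role of the hypothesis on $\rays^c$ is specifically to keep the new-edges step consistent by preventing the tilting from reintroducing a point already removed by an earlier activation. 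Once these three correspondences are verified, Theorem~\ref{thm:RHA_valid} applied to $\tilted{H}$ transfers properness to the output of $\text{\PHAOne}(P,S,H,\rays',(\pointset,\rayset))$, proving the theorem.
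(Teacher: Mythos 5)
Your proposal is correct and follows essentially the same route as the paper: both reduce the theorem to Theorem~\ref{thm:RHA_valid} by showing that activating $H$ on $\rays'$ and activating the tilted hyperplane on all of $\optconerays$ produce identical point-ray collections, partitioning $\optconerays$ in the same way, matching additions and removals face by face, and invoking the hypothesis $\rays^c \cap \raysIntByH(H) \subseteq \rays'$ to make the re-addition of $v(\bd{S},r')$ a no-op. Your removal-step argument (boundary points in the relative interior of the face lie in $\tilted{H}^-$) is just a rephrasing of the paper's observation that the mismatch triangle $\conv(v(H,r), v(H,r'), v(\bd{S},r'))$ has its relative interior in $\interior{S}$ and hence contains no collection elements.
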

\begin{proof}
  We show that
		$\text{\PHAOne}(P, S, H, \rays', (\pointset,\rayset))$.
	produces the same point-ray collection as
	    $\text{\PHAOne}(P, S, \tilted{H}, \optconerays, (\pointset,\rayset))$.
	In particular, we analyze each iteration of step~\ref{step:RHA1:act-hplane} of Algorithm~\ref{alg:RHA1} and show that both algorithms produce the same point-ray collection after each iteration.
	Observe that the only rays of $\optcone$ processed by the algorithm are those that belong to $\rayset_A$ and are cut by $H$ prior to intersecting $\bd{S}$.
	Our targeted tilting method implies that $\raysIntByH(\tilted{H}) = \rays'$, all of which are intersected by both $H$ and $\tilted{H}$ before $\bd{S}$.
	The result is that activation is performed on the same set of rays $\rays'$ for both $H$ and $\tilted{H}$.
	Let us fix a ray $r \in \rays'$ being processed.
	Throughout, the reader may find it useful to refer to Figure~\ref{fig:tilting-theorem}, which illustrates the two-dimensional face of $\optcone$ defined by rays $r \in \rayset'$ and $r' \in \raysIntByH(H) \setminus \rayset'$, the hyperplane $H$ and its tilted version $\tilted{H}$, as well as the result of a prior activation of a hyperplane $H'$ that was not tilted on ray $r'$.

	{
		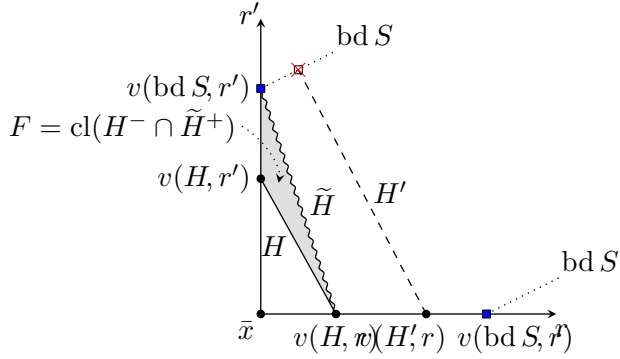
\begin{figure}
			\centering
			\begin{tikzpicture}[line join=round,x={(1 cm, 0 cm)},y={(0 cm, 1 cm)},z={(\zX cm,\zY cm)},>=stealth,scale=2]
				\coordinate (barx) at (0,0);
				\coordinate (rend) at (2,0);
				\coordinate (r'end) at (0,2);
				\coordinate (r) at ($(rend)-(barx)$);
				\coordinate (r') at ($(r'end)-(barx)$);
				\coordinate (pj) at ($(barx)+0.75*(r)$);
				\coordinate (pk) at ($(barx)+0.75*(r')$);
      			\coordinate (vHr) at ($(barx)+0.25*(r)$);
				\coordinate (vHr') at ($(barx)+0.45*(r')$);
			    \coordinate (vH'r) at ($(barx)+0.55*(r)$);
				\coordinate (vH'r') at ($(barx)+(r')$);
				\coordinate (p) at (1/4,13/8);
				
				\fill [gray,opacity=0.25] (vHr) -- (vHr') -- (pk) -- cycle;

				\draw [ray,black] (barx) -- (rend) node [pos=1,fill=none,below=0pt,font=\small] {$r$};
				\draw [ray,black] (barx) -- (r'end) node [pos=1,fill=none,left=-4pt,font=\small] {$r'$};
				\draw [dotted,polyhedron_edge,black] (pj) -- ($(pj)+0.25*(2,1)$) node [pos=1,fill=none,above=-2pt,font=\small] {\rlap{$\bd{S}$}};
				\draw [dotted,polyhedron_edge,black] (pk) -- ($(pk)+0.25*(2,1)$) node [pos=1,fill=none,above=-2pt,font=\small] {\rlap{$\bd{S}$}};
				\draw [polyhedron_edge,black] (vHr) -- (vHr') node [pos=0.5,fill=none,left=0pt,font=\small] {$H$};
				\draw [polyhedron_edge,black,dashed] (vH'r) -- (p) node [pos=0.5,fill=none,right=0pt,font=\small] {$H'$};
				\draw [polyhedron_edge,decorate,decoration={snake,amplitude=.2mm,segment length=1.5mm},black] (vHr) -- (pk) node [pos=0.5,fill=none,right=0pt,font=\small] {$\tilted{H}$};
      
				\node [draw, point, fill=black, label={[label distance=-4pt]-135: \small $\lpopt$}] at (barx) {};
				\node [draw, intersection_point, fill=blue, label={[label distance=-2pt]-90: \small \ \ \ \ \ \ $v(\bd{S}\!,r)$}] at (pj) {};
				\node [draw, intersection_point, fill=blue, label={[label distance=-2pt]180: \llap{\small $v(\bd{S}\!,r')$}}] at (pk) {};
				\node [draw, intersection_point, fill=none, label={[label distance=-5pt]135: \small }] at (p) {};
				\node [draw, cross=3pt, red] at (p) {};
				\node [draw, point, fill=black, label={[label distance=-2pt]-90: \small $v(H'\!\!,r)$\ \ \ \ \ \ }] at (vH'r) {};
				\node [draw, point, fill=black, label={[label distance=-2pt]-90: \small $v(H\!,r)$}] at (vHr) {};
				\node [draw, point, fill=black, label={[label distance=-2pt]180: \llap{\small $v(H\!,r')$}}] at (vHr') {};
				
		\path (-1/8,5/4) edge [->, shorten >= 0.25cm, line width=0.5pt, black, dotted, bend left=35] node [pos=0,left=-2pt] {\llap{\small $F = \cl(H^- \cap \tilted{H}^+)$}} (3/32,.75);
			\end{tikzpicture}
		\caption{Reference illustration for the proof of Theorem~\ref{thm:compute-tilted-points}, activating a tilted version of $H$ (after an earlier activation of a hyperplane $H'$).} 
		\label{fig:tilting-theorem}
		\end{figure} 
	}
	
	We first discuss the set of points and rays that are added by Algorithm~\ref{alg:RHA1} in each of the cases,
	starting at step~\ref{step:RHA1:process-ray}, in which the edges emanating from $v(H,r)$ are considered.
	Note that there is one edge per ray $r' \in \optconerays \setminus \{r\}$, because $\optcone$ is a simplicial cone.

	Consider an edge corresponding to a ray $r'$ whose intersection with $H$ and $\tilted{H}$ remains unchanged ($v(H,r') = v(\tilted{H},r')$), i.e., for $r' \in \rays' \cup (\optconerays \setminus \raysIntByH(H))$.
	This is an edge that exists in both $H \cap \optcone$ and $\tilted{H} \cap \optcone$.
	Since the edge is the same, the resulting intersection point or ray will be the same in either case.
	
	Now suppose that $r' \in \raysIntByH(H) \setminus \rays'$, as in Figure~\ref{fig:tilting-theorem}.
	When activating $H$, the corresponding edge is skipped in step~\ref{step:RHA1:is-ray-edge}, because $r' \in \raysIntByH(H)$ so that $v(H,r') \in \interior{S}$.
	When activating $\tilted{H}$, $v(\tilted{H},r') = v(\bd{S},r')$ (the intersection point of $r'$ with $\bd{S}$) instead of $v(H,r')$.
	Thus, the new intersection point or ray created is precisely $v(\bd{S},r')$.
	However, because $r' \notin \rays'$, it must be the case that $r' \notin \rays^c$, i.e., the ray $r'$ has not been cut,
	which implies that $v(\bd{S},r')$ already belongs to $(\pointset,\rayset)$.
	This proves that the same points and rays are added whether $H$ or $\tilted{H}$ is activated.
	
	Lastly, we show that the same points and rays are removed in step~\ref{step:RHA1:remove-viol-points}.
	Consider the two-dimensional face of $\optcone$ defined by $r$ and $r'$.
	As before, when the edge of $H$ and $\tilted{H}$ is the same on this face, exactly the same points and rays of $(\pointset,\rayset)$ that lie on this face are cut when activating either hyperplane.
	Thus, assume that $r' \in \raysIntByH(H) \setminus \rays'$.
	Since $\tilted{H}$ is weaker than $H$, anything cut by $\tilted{H}$ is certainly cut by $H$.
	
	It remains to show the converse, that any point or ray removed by $H$ is also removed by $\tilted{H}$.
	Observe (as depicted in Figure~\ref{fig:tilting-theorem}) that the only part of the face defined by $r$ and $r'$ that is cut by $H$ but not by $\tilted{H}$ is in $F := \conv(v(H,r), v(H,r'), v(\bd{S},r'))$.%
	\footnote{There is one intersection point/ray in $F$: $v(\bd{S},r')$. It is true that, when activating a prior hyperplane $H'$, $v(H',r')$ could coincide with $v(\bd{S},r')$, and in step~\ref{step:RHA1:remove-viol-points}, this would be cut by $H$ but not by $\tilted{H}$. However, this point/ray is a duplicate of the intersection point of $r'$ with $\bd{S}$, which would remain in $(\pointset,\rayset)$.}
	Since $\relint(F) \subseteq \interior{S}$, it contain no points or rays from $(\pointset,\rayset)$, completing the proof.
\end{proof}

One effect of Theorem~\ref{thm:compute-tilted-points} is that it is valid to simply ignore all intersections of hyperplanes with rays in $\initrayset$,
as is done in Algorithm~\ref{alg:PHA}.
The activation performed in step~\ref{step:PHA:RHA-act} of Algorithm~\ref{alg:PHA} is valid because the rays in $\initrayset$ are always ignored, so that the conditions of Theorem~\ref{thm:compute-tilted-points} apply.

We now operationalize the above theorem in a different way.
The motivation for targeted tilting comes not only from Proposition~\ref{prop:parallel-rays}, to avoid cutting the rays in $\initrayset$, but also from our desire to find GICs strictly stronger than the SIC from the same $P_I$-free convex set $S$.
This may require cutting many rays of $\optcone$.
However, using Algorithm~\ref{alg:PHA} for this purpose could involve unnecessarily cutting some rays of $\optcone$ multiple times.
This will happen when we have already cut a certain ray of $\optcone$ but a hyperplane being subsequently activated intersects it again;
not only might this effort be wasted, but also it may create redundant or weak intersection points.
As an alternative, we present the targeted tilting algorithm, Algorithm~\ref{alg:targeted-tilting}, which is tailored to the task of cutting the initial intersection points and rays more efficiently.

\begin{algorithm}[h]
\caption{Generalized Intersection Cuts by {\PHAOne} and Targeted Tilting}\label{alg:targeted-tilting}
\begin{algorithmic}[1]
\Input Polyhedron $P$ defined by a set of hyperplanes $\hplaneset$;
  a vertex $\lpopt$ of $P$;
  indices of fractional integer variables $\sigma$;
  hyperplane selection criterion $\mathcal{SC}$;
  objectives $\mathcal{O}$;
  set $\rayset_A \subseteq \optconerays$ for the rays of $\optcone$ to cut.
\Function{{\PHAOne}CutGeneratorWithTargetedTilting}{$P, \lpopt, \sigma, \mathcal{SC}, \mathcal{O}, \rayset_A$} \label{step:tt:setup}
  \For{$k \in \sigma$}
  	\State
		$S_k \gets \{x \suchthat \floor{\lpopt_k} \le x_k \le \ceil{\lpopt_k}\}$.
    \State
      $\rayset_k \gets \{r \in \optconerays \suchthat \distToHplane{\bd{S_k}}{r} = \infty\}$;
      $\pointset_k \gets \{r \cap \bd{S_k} \suchthat r \in \optconerays \setminus \rayset_k\}$.
  \EndFor
  \State $j \gets 0$; $\rays' \gets \emptyset$.
    \For{the next ray $r \in \rayset_A$} \Comment{Assume sorted by decreasing reduced cost} \label{step:PHA:tt-start}
	 \State $j \gets j+1$; $\rays' \gets \rays' \cup \{r\}$; $\rayset_A \gets \rayset_A \setminus \{r\}$.
      \For{$k \in \sigma$} \label{step:tt:iterate-through-splits}
          \State
            \label{step:tt:init}
            $\hplaneset^j \gets \{H \in \hplaneset \setminus \optconehplanes : \distToHplane{H}{r} < \distToHplane{\bd{S_k}}{r}\}$.
          \If{$\hplaneset^j \ne \emptyset$}
            \State \label{step:tt:choose-hplane}
              Select a hyperplane
      		$H_j \in \hplaneset^j$ according to selection criterion $\mathcal{SC}$.
        \State
          \label{step:tt-act}
          $(\pointset_k, \rayset_k) \gets \text{\PHAOne}(P, S_k, H_j, {\rayset}', (\pointset_k,\rayset_k))$.
        \EndIf  
      \EndFor
      \If{$\log(j) \in \Z$ or $\rayset_A = \emptyset$}
        \State
        \label{step:tt-end}
        Add to $\mathcal{C}$ valid cuts by
        solving \eqref{CutLP}
        with objective types $\mathcal{O}$, always ensuring $\lpopt$ is cut.
      \EndIf
    \EndFor
\EndFunction
\end{algorithmic}
\end{algorithm}

Algorithm~\ref{alg:targeted-tilting} focuses on cutting one ray of $\optcone$ at a time.
The hyperplane chosen for each ray is tilted in step~\ref{step:tt-act} in a way that avoids cutting all other rays, other than the ones that have been previously cut, as required.
However, this may result in cuts that do not get monotonically stronger as more rays of $\optcone$ are cut, as might be expected.
This is due to our observation that 
cutting a ray multiple times may lead to the addition of weak intersection points to the collection.
We counteract this in step~\ref{step:tt-end} of Algorithm~\ref{alg:targeted-tilting},
by performing cut generation before all hyperplanes have been chosen and activated.
To reduce computational effort, this step is only performed $\ceil{\log(n)}$ times.

Although targeted tilting may reduce the number of weak intersection points that result from cutting the same rays of $\optcone$ repeatedly, it may also miss improving the point-ray collection in certain directions.
This creates an inherent tradeoff when deciding whether to use {\PHAOne} as part of Algorithm~\ref{alg:PHA} or Algorithm~\ref{alg:targeted-tilting}.
We discuss this as part of our numerical study in Section~\ref{sec:computation}.

\section{Implementation choices for \texorpdfstring{\PHAOne}{PHA1}}
\label{sec:theory}

Algorithms~\ref{alg:PHA} and \ref{alg:targeted-tilting} both involve steps in which a hyperplane is selected and then, after the points and rays have been collected, a set of objective directions is used for generating GICs from the resulting \eqref{CutLP}.
In this section, we discuss the choices we make for these steps and provide theoretical motivation for the decisions.

\subsection{Choosing hyperplanes to activate}
\label{subsec:hplane-choices}

We first state the three criteria we consider for choosing hyperplanes to activate 
as the parameter $\mathcal{SC}$ (used in Algorithm~\ref{alg:PHA} step~\ref{step:PHA:RHA-choose-hplane} and Algorithm~\ref{alg:targeted-tilting} step~\ref{step:tt:choose-hplane}).
For an index $k \in I$, 
we consider optimizing over the $\Sk$-closure, i.e., $\conv(P \setminus \interior{\Sk})$.

\noindent\mbox{}\\
\textbf{(H1) Choose the hyperplane that first intersects some ray $r \in \optconerays \setminus \initrayset$ before $\bd{\Sk}$.}\ \ 
This corresponds to pivoting to the nearest neighbor from $\lpopt$ along the ray $r$,
which would be the same hyperplane selected in the procedure from \cite{BalMar13}.

\noindent\mbox{}\\
\textbf{(H2) Choose the hyperplane that yields a set of points with highest average depth.}\ \ 
Here, \textit{depth} is calculated as the Euclidean distance to the SIC (using the same split set).
(H2) seeks a set of points that are far from the cut we are trying to improve upon.
The idea is that the resulting cuts will then be deeper as well.

\noindent\mbox{}\\
\textbf{(H3) Choose the hyperplane creating the most {final} intersection points.}\ \ 
A \emph{final} intersection point is defined as follows.

\begin{definition} \label{defn:finalpoint}
  Suppose $(\pointset\splitindex{k}, \rayset\splitindex{k})$ is a proper point-ray collection.
  An intersection point in $\pointset\splitindex{k}$ or a ray in $\rayset\splitindex{k}$ is \emph{final}
  (with respect to $S\splitindex{k}$)
  if it belongs to $P$ and $\bd\Sk$, meaning it cannot be cut away by any valid hyperplane activations.
\end{definition}

We denote the set of final intersection points by $\pointset\final\splitindex{k}$
  and final rays by $\rayset\final\splitindex{k}$.

\begin{proposition} \label{prop:Skclosure}
  Suppose $C = P$.
  Then, the intersection points in $\pointset\final\splitindex{k}$ define vertices of the $\Sk$-closure 
  and the rays $\rayset\final\splitindex{k}$ define extreme rays of $\Sk$-closure.
\end{proposition}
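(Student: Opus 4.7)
The plan is to express the $\Sk$-closure as $\conv(P_1 \cup P_2)$, where $P_1 := P \cap \{x : x_k \le \floor{\lpopt_k}\}$ and $P_2 := P \cap \{x : x_k \ge \ceil{\lpopt_k}\}$, and then to show directly that every final intersection point is a vertex of this polyhedron and every final ray is an extreme ray of its recession cone.

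For the vertex claim, I would pick any $p \in \pointset\final\splitindex{k}$ and, since $C = P$, write $p = e \cap \bd\Sk$ for some edge $e$ of $P$. Assume without loss of generality that $p \in H_1 := \{x : x_k = \floor{\lpopt_k}\}$. An edge of $P$ is determined by $n-1$ linearly independent tight inequalities of $P$; conjoining with $x_k = \floor{\lpopt_k}$, which is independent of the edge's defining equations since $e$ is not parallel to $H_1$, yields $n$ linearly independent active constraints at $p$, so $p$ is a vertex of $P \cap H_1$ and hence of $P_1$. The main step is to lift this to $\conv(P_1 \cup P_2)$: since $p$ is a vertex of $P \cap H_1$ there exists a functional $c^{(1)}$ whose unique minimizer over $P \cap H_1$ is $p$, and by adding a sufficiently large positive multiple of $e_k$ one obtains a functional $c$ that (i) lies in the polar of the recession cone of the $\Sk$-closure, (ii) still has $p$ as its unique minimizer over $P_1$, and (iii) strictly penalizes $P_2$ (via $x_k - \floor{\lpopt_k} \ge 1$ for $x \in P_2$) so that every vertex of $P_2$ attains a strictly greater value than $c^\T p$. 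This certifies $p$ as a vertex of the $\Sk$-closure.

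An analogous argument handles rays. An unbounded edge of $P$ lying in $\bd\Sk$ has direction $r$ with $r_k = 0$ and is an extreme ray of $\mathrm{rec}(P)$. Since every recession direction of $P$ belongs to $\mathrm{rec}(P_1)$ or $\mathrm{rec}(P_2)$ depending on the sign of its $k$-th coordinate, $\mathrm{rec}(\conv(P_1 \cup P_2)) = \mathrm{rec}(P)$, so $r$ is also an extreme ray of the recession cone of the $\Sk$-closure. The main obstacle I anticipate is the lifting argument for vertices when $P_2$ is unbounded: the perturbation $\lambda e_k$ must be chosen simultaneously to push every extreme ray of $\mathrm{rec}(P)$ into the polar of the modified functional and to separate $p$ from every vertex of $P_2$. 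A cleaner alternative would be to invoke LP duality for the disjunctive cut polyhedron to exhibit the supporting functional directly.
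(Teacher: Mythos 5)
Your reduction of the $\Sk$-closure to $\conv(P_1 \cup P_2)$ and your observation that $p$ is a vertex of the slice $P \cap H_1$ (hence of $P_1$, since that slice is a face of $P_1$) are fine, but the lifting step --- the part you yourself flag as the main step --- fails, and for a reason more basic than unboundedness of $P_2$. The perturbation $c = c^{(1)} + \lambda e_k$ with $\lambda > 0$ large does penalize $P_2$, but it simultaneously \emph{rewards} every point of $P_1$ with $x_k < \floor{\lpopt_k}$: for such $x$ the added term $\lambda x_k$ is strictly smaller than $\lambda p_k$, so for large $\lambda$ the minimum of $c$ over $P_1$ migrates off the facet $P \cap H_1$ and your claim (ii) breaks. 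The two requirements (dominate $P_2$; keep $p$ uniquely optimal on $P_1$) pull the coefficient of $e_k$ in opposite directions, and no single $\lambda$ reconciles them in general; the same conflict, with signs reversed, occurs when $p$ lies on the facet $x_k = \ceil{\lpopt_k}$.

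The deeper problem is that the strategy cannot be repaired as structured, because the only property of $p$ it exploits --- vertex-hood in the slice $P \cap H_1$ --- is genuinely too weak. Take $P = \conv\{(1/2,0),\,(0,2),\,(3,1)\} \subset \R^2$ with $\lpopt = (1/2,0)$ and the split $0 \le x_1 \le 1$. The edge of $P$ joining $(0,2)$ to $(3,1)$ meets $\{x \suchthat x_1 = 1\}$ at $q = (1,5/3)$, which is a vertex of $P \cap \{x \suchthat x_1=1\}$ and lies in $P \cap \bd{\Sk}$, yet $q = \tfrac{2}{3}(0,2) + \tfrac{1}{3}(3,1)$ with $(0,2) \in P_1$ and $(3,1) \in P_2$, so $q$ is \emph{not} a vertex of $\conv(P_1 \cup P_2)$: no supporting functional of the kind you seek exists. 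Your argument, applied verbatim to $q$, would certify it as a vertex, so some additional structural input is indispensable. The paper's proof supplies it differently: it supposes $p$ is a convex combination of other points of the closure and derives that the generating \emph{edge} of $P$ would then be a conic combination of other edges of $P$ meeting the same facet of $\Sk$, contradicting its extremality as a one-dimensional face. It is this extremality of the edge (together with the point belonging to the part of the skeleton attached to $\lpopt$), not merely of the point within the slice, that does the work. Your treatment of the rays, via $\mathrm{rec}(\conv(P_1 \cup P_2)) = \mathrm{rec}(P)$, is essentially sound.
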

\begin{proof}
  The $\Sk$-closure is defined as
    $ 
      \Pk = \conv(P \setminus \interior \Sk).
    $ 
  Therefore, the points $p \in \pointset\final$ are in the $\Sk$-closure 
  since they belong to $P \cap \bd{\Sk}$.
  Furthermore, the rays in $\rayset\final$ are in the $\Sk$-closure since they 
  are extreme rays of $P$ that do not intersect $\bd{\Sk}$.

  Consider an arbitrary side of the split disjunction, $\Sk\onfloor := \{x: x_k = \floor{\lpopt_k}\}$.
  Suppose that a vertex $p = r \cap \Sk\onfloor$ in $\pointset\splitindex{k}$ is not a vertex of the $\Sk$-closure. 
  Then it can be written as a convex combination of other vertices of $P \cap \Sk\onfloor$ 
  that are also intersection points created from edges of $P$. 
  This implies that $r$, an edge of $P$, can be written as a conic combination of edges in $P$ 
  that intersect $\Sk\onfloor$, which is a contradiction. 
  Showing that rays in $\rayset\splitindex{k}$ are extreme rays of $\Sk$-closure follows similar reasoning. 
\end{proof}

Thus, (H3) targets a set of points that lead to facet-defining inequalities for the set $\conv(P \setminus \interior S)$.
It turns out to be useful to distinguish between intersection points that are in $P$ and those that are not.

\subsection{Choosing objective functions}
\label{subsec:obj-choices}

Even if we know that strong cuts exist from a given point-ray collection,
it remains to generate these GICs using \eqref{CutLP}.
To do this, we need to appropriately choose objective function coefficient vectors.
We consider the following objective directions:
\begin{enumerate}[labelindent=0pt,leftmargin=!] 
  \item[($R$)] Ray directions of $\optcone$ (i.e., the initial intersection points)
  \item[($V$)] Vertices that are generated from hyperplane activations
  \item[($T$)] Intersection points obtained for $\Sk$ (the \emph{tight point} heuristic)
  \item[($S$)] Intersection points from other splits
\end{enumerate}

The first set of objectives, $(R)$, is incentivized by the observation that obtaining GICs that strictly dominate the SIC from the same $P_I$-free convex set requires cutting many of the initial intersection points.
This observation is made concrete in the Theorem~\ref{thm:strictDom} in Appendix~\ref{app:theory};
we show that finding a strictly dominating cut requires reducing the dimension of the convex hull of the initial intersection points and rays.
Another objective typically used for a cut-generating linear program is to minimize $\lpopt^\T \alpha$,
to obtain the most violated cut with respect to $\lpopt$.
We generalize this with $(V)$, by optimizing to find the most violated cuts with respect to each of the vertices created on $\optcone$ by hyperplane activations.

The third set of directions, $(T)$, is referred to as the tight point heuristic as we are simply trying to find a cut tight on each of the rows of \eqref{CutLP} corresponding to points.
In other words, we minimize $\alpha^\T p$
for every $p \in \pointset$ from the current split.
Obviously, we will not be able to cut away $p$;
the purpose of this objective function is to place a cut as close to the chosen intersection point as possible.
The fourth set, $(S)$, capitalizes on the fact that multiple split sets are considered simultaneously in practice,
so that we can share information across the sets to obtain stronger cuts.

The sets of objective directions $(T)$ and $(S)$ switch the perspective typically used for linear programs that generate cuts.
Instead of finding a cut with maximal violation, the cuts we obtain from these objectives aim to approximate the convex hull of intersection points and rays
from every direction, thereby obtaining as many of the facets of that convex hull as possible and a set of cuts with a wider diversity of angles, which is a desirable computational quality.

Next, we discuss the theoretical motivation for $(T)$.
Consider optimizing over the $\Sk$-closure: $\min\{c^\T x \suchthat x \in \conv(P \setminus \interior{\Sk})\}$. 
We show that intersection points can be used to find an optimal solution to this problem without using \eqref{CutLP},
and we address how the heuristic $(T)$ aims to obtain the bound implied by this optimal solution.

Theorem~\ref{thm:optOverSkWithP} shows what bounds can be computed on the optimal value over the $\Sk$-closure
using the point-ray collection when $P \subseteq C$, i.e., not all hyperplanes may have been activated.
We define 
  $\overline{p}^k \in \argmin_{p \in \pointset\final\splitindex{k}} c^\T p$
(when $\pointset\final\splitindex{k}$ is nonempty)
and 
  $\underline{p}^k \in \argmin_{p \in \pointset\splitindex{k}} c^\T p$.
Let $\overline{z} = c^\T \overline{p}^k$ 
(defined to be $+\infty$ when $\pointset\final\splitindex{k}$ is empty)
and $\underline{z} = c^\T \underline{p}^k$.

\begin{theorem}\label{thm:optOverSkWithP}
  Suppose $\pointset\final\splitindex{k}$ is nonempty. 
  If $\underline{z} < \overline{z}$, 
  then $\underline{z}$ is a lower bound and $\overline{z}$ is an upper bound 
  on the minimum over the $\Sk$-closure. 
  Otherwise, $\overline{z} \le \underline{z}$ 
  and $\overline{z}$ is the minimum over the $\Sk$-closure.
\end{theorem}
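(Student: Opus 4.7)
The plan is to prove the sandwich $\underline{z} \le z^* \le \overline{z}$, where $z^* := \min\{c^\T x : x \in \conv(P \setminus \interior{\Sk})\}$, from which both dichotomy cases follow immediately. Since $\pointset\final\splitindex{k} \subseteq \pointset\splitindex{k}$, the inequality $\underline{z} \le \overline{z}$ is automatic, so in the case $\underline{z} < \overline{z}$ the sandwich is the claimed two-sided bound, while in the case $\underline{z} = \overline{z}$ the sandwich collapses to $z^* = \overline{z}$, which is the ``otherwise'' statement.

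The upper bound is immediate: because $\overline{p}^k$ is final, $\overline{p}^k \in P \cap \bd{\Sk} \subseteq \conv(P \setminus \interior{\Sk})$, so $z^* \le c^\T \overline{p}^k = \overline{z}$. For the lower bound, I would apply Lemma~\ref{lem:valid_inequalities} with $(\alpha,\beta) = (c, \underline{z})$. The point constraints $c^\T p \ge \underline{z}$ for $p \in \pointset\splitindex{k}$ hold by the definition of $\underline{z}$, and the ray constraints $c^\T r \ge 0$ for $r \in \rayset\splitindex{k}$ follow from $C \subseteq \optcone$ combined with LP-optimality of $\lpopt$ over $\optcone$, which forces every recession direction of $C$ to have non-negative reduced cost. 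Note that this same reasoning also gives $c^\T \lpopt \le \underline{z}$, since every intersection point lies in $\optcone$. If $c^\T \lpopt < \underline{z}$, Lemma~\ref{lem:valid_inequalities} delivers $c^\T x \ge \underline{z}$ on $C \setminus \interior{\Sk}$; taking convex hulls and using $P \subseteq C$ gives $z^* \ge \underline{z}$. If instead $c^\T \lpopt = \underline{z}$, then $\underline{z} = c^\T \lpopt \le z^*$ directly, by LP-optimality of $\lpopt$ over $P \supseteq \conv(P \setminus \interior{\Sk})$.

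The step I expect to require the most care is verifying the ray condition $c^\T r \ge 0$, because rays in $\rayset\splitindex{k}$ are unbounded edges of the relaxation $C$ and can originate at vertices of $C$ other than $\lpopt$, so simplex-optimality at $\lpopt$ does not directly bound their reduced costs. The structural observation that makes this step work is that $C$ is obtained by adding halfspaces valid for $P$ to the simple cone $\optcone$, so $C \subseteq \optcone$; the recession cone of $C$ is therefore contained in $\optcone$, where every direction has non-negative reduced cost with respect to $c$ by simplex-optimality of $\lpopt$.
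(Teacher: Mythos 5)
Your proof is correct, and the sandwich $\underline{z} \le z^* \le \overline{z}$ you establish does yield both cases of the dichotomy. The upper bound is handled exactly as in the paper (finality of $\overline{p}^k$ puts it in $P \cap \bd{\Sk}$, hence in the $\Sk$-closure, via Proposition~\ref{prop:Skclosure}). Where you diverge is the lower bound: the paper simply asserts $\underline{z} = \min\{c^\T x \suchthat x \in C \cap \bd{\Sk}\}$ and concludes from $P \cap \bd{\Sk} \subseteq C \cap \bd{\Sk}$, leaving implicit both why that minimum is attained at an intersection point (and is bounded) and why the minimum over the closure is controlled by the minimum over $P \cap \bd{\Sk}$ (the latter uses that $\lpopt \in \interior{\Sk}$ minimizes $c$ over $P$, so the segment from $\lpopt$ to an optimal $p^*$ crosses $\bd{\Sk}$ at a point of no larger cost). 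You instead treat $(c,\underline{z})$ as a candidate solution of \eqref{CutRegion} and invoke Lemma~\ref{lem:valid_inequalities}, verifying the ray condition $c^\T r \ge 0$ via $\rec(C) \subseteq \rec(\optcone)$ and the nonnegativity of reduced costs at $\lpopt$, with a clean case split on whether $c^\T \lpopt < \underline{z}$. This buys a more self-contained and explicit justification of the lower bound — in particular it surfaces the role of LP-optimality of $\lpopt$ and the boundedness along rays, which the paper's one-line argument glosses over — at the cost of being slightly longer. Both arguments rely on the standing hypothesis $P \subseteq C \subseteq \optcone$ stated just before the theorem, which you correctly use in both the containment step and the recession-cone step.
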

\begin{proof}
  By Proposition~\ref{prop:Skclosure}, $\overline{p}^k$ is in the $\Sk$-closure. 
  Therefore, $\overline{z}$ always provides an upper bound on $c^\T p^*$, 
  where $p^*$ is a minimum over the $\Sk$-closure.

  When $\underline{z} < \overline{z}$, $\underline{z} = \min\{c^\T x \suchthat x \in C \cap \bd{\Sk}\}$ 
  provides a valid lower bound on $c^\T p^*$, since $P \cap \bd{\Sk} \subseteq C \cap \bd{\Sk}$. 
  On the other hand, when $\overline{z} \le \underline{z}$, 
  we have $\overline{z} = \min\{c^\T x \suchthat x \in C \cap \bd{\Sk}\}$, 
  which implies that $\overline{z}$ provides a lower bound on $c^\T p^*$.
  Therefore, it must be a minimum over the $\Sk$-closure. 
\end{proof}

Corollary~\ref{cor:optOverSkWithP} follows immediately for the special case when $C = P$.

\begin{corollary}\label{cor:optOverSkWithP}
  If $C = P$, then
  $\overline{p}^k$
  is an optimal solution over the $\Sk$-closure.
\end{corollary}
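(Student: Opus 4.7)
The plan is to derive this corollary directly from Theorem~\ref{thm:optOverSkWithP} by showing that, under the hypothesis $C = P$, the sets $\pointset\final\splitindex{k}$ and $\pointset\splitindex{k}$ coincide. This will force $\overline{z} = \underline{z}$, and then the second case of the theorem yields that $\overline{z}$ is the minimum over the $\Sk$-closure, with $\overline{p}^k$ attaining it.

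More concretely, the first step is to unpack the definitions. Every point $p \in \pointset\splitindex{k}$ arises from intersecting an edge of $C$ with $\bd{\Sk}$; when $C = P$, such an edge lies in $P$, so $p \in P \cap \bd{\Sk}$. By Definition~\ref{defn:finalpoint}, this is exactly the condition for $p$ to be final, hence $\pointset\splitindex{k} \subseteq \pointset\final\splitindex{k}$. The reverse inclusion $\pointset\final\splitindex{k} \subseteq \pointset\splitindex{k}$ is immediate from the definition, so the two sets are equal.

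From the equality $\pointset\splitindex{k} = \pointset\final\splitindex{k}$, the minima used to define $\underline{z}$ and $\overline{z}$ are taken over the same set, so $\underline{z} = \overline{z}$. This lands us in the second case of Theorem~\ref{thm:optOverSkWithP}, which tells us that $\overline{z}$ equals the minimum over the $\Sk$-closure. Since $\overline{z} = c^\T \overline{p}^k$ and $\overline{p}^k$ lies in the $\Sk$-closure by Proposition~\ref{prop:Skclosure}, the point $\overline{p}^k$ is an optimal solution, as claimed.

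There is no real obstacle here; the corollary is a straightforward consequence once one observes that the hypothesis $C = P$ makes the distinction between ``intersection point'' and ``final intersection point'' vanish. The only place requiring a slight bit of care is ensuring that $\pointset\final\splitindex{k}$ is nonempty so that Theorem~\ref{thm:optOverSkWithP} applies; this follows because $\lpopt$ is fractional in coordinate $k$, so the cone $\optcone \subseteq P$ has at least one edge crossing $\bd{\Sk}$, and the resulting intersection point belongs to $\pointset\splitindex{k} = \pointset\final\splitindex{k}$.
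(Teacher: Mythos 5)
Your proof is correct and matches the paper's intended argument: when $C = P$ every intersection point lies in $P \cap \bd{\Sk}$ and is therefore final, so $\pointset\splitindex{k} = \pointset\final\splitindex{k}$, $\overline{z} = \underline{z}$, and the second case of Theorem~\ref{thm:optOverSkWithP} applies. One minor slip in your closing remark: the containment is $P \subseteq \optcone$ (the cone is a relaxation of $P$), not $\optcone \subseteq P$, though nonemptiness of $\pointset\final\splitindex{k}$ is in any case implicit in the statement since $\overline{p}^k$ is only defined when that set is nonempty.
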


Thus, $\underline{p}^k$ is readily available from $\pointset\splitindex{k}$ and implies a lower bound on the value
of the optimal solution over the $\Sk$-closure.
The same bound $\underline{z}$ implied by $\underline{p}^k$ can be obtained through GICs, 
but it may require many cuts generated from \eqref{CutLP}.
We may be able to obtain one inequality tight at $\underline{p}^k$ by using $\underline{p}^k$ 
itself as the objective coefficient vector, since $\alpha^\T \underline{p}^k \ge \beta$ for all $(\alpha,\beta)$
feasible to \eqref{CutRegion}, and there will exist a facet-defining inequality 
	$\bar{\alpha}^\T x \ge \bar{\beta}$ 
for $\cutpolyhedron$ satisfying 
	$\bar{\alpha}^\T \underline{p}^k = \bar{\beta}$.
Note that for validity of this inequality to be guaranteed, 
we must additionally verify that the inequality cuts a point $v \in \mathcal{K}'$.
However, even if this is satisfied, it is unlikely that the one cut will imply the bound
$c^\T x \ge \underline{z}$ on the objective value.
In the worst case, $n$ facets of $\cutpolyhedron$ that are tight at $\underline{p}^k$ 
may be required to obtain this bound via cuts.
To get these other facets tight at $\underline{p}^k$, 
we can use points in $\pointset\splitindex{k}$ that lie close to $\underline{p}^k$ as the objective vectors.
This is precisely what we do in approach $(T)$,
though we do not only use $\underline{p}^k$ and points in its vicinity,
but also other points from $\pointset\splitindex{k}$ to encourage diversity of the cut collection.

Finally, we give more details and motivation for the objective directions $(S)$.
When using a single split set $\Sk$,
no intersection point generated from that split can be cut by any inequality
generated through \eqref{CutLP} from the point-ray collection for $\Sk$.
However, different splits (more generally, different $P_I$-free convex sets)
give rise to different point-ray collections,
and the buildup of intersection points and rays
can and should be done in parallel for several splits.
The first reason for this is computational efficiency:\ 
one can intersect an edge with the boundaries of more than one split
set and store these intersection points or rays separately.
The second reason is that,
when using multiple split disjunctions, an intersection point on the boundary of one split set
may lie in the interior of another
and hence can be cut away by a facet of the point-ray collection from this second split.
With this in mind,
let $\sigma$ be the indices of a set of integer variables that are fractional at $\lpopt$,
i.e., $\sigma \subseteq \{j \in I\suchthat \lpopt_j \notin \Z\}$.
For every intersection point $p$ generated from intersecting an edge of $C$ with the boundary
of some split disjunction, we follow the edge to find the \emph{last} split disjunction $\Sk$, 
$k \in \sigma$, that this edge intersects.
If $p$ lies in the interior of $\Sk$, then we will use $p$ as an objective for \eqref{CutLP}
with feasible region determined by the point-ray collection from $\Sk$.

\section{Computational results}
\label{sec:computation}

This section contains the results from computational experiments with {\PHAOne} as used in Algorithm~\ref{alg:PHA} from Section~\ref{sec:PHA} and Algorithm~\ref{alg:targeted-tilting} from Section~\ref{sec:tilting}.
The purpose of these experiments is exploratory;
that is, we seek conditions and implementation choices that lead to strong GICs.
To do this, we
evaluate the effect of a variety of parameters, including those discussed in Section~\ref{sec:theory},
and identify structural properties of instances that can be taken advantage of to find stronger cuts.
In particular, our results indicate that it is beneficial to use objective functions targeting GICs that are tight on the points and rays in the point-ray collection.
Another observation from our experiments is that \PHAOne{} leads to few final points (as defined in Section~\ref{subsec:hplane-choices}), which, in combination with the intuitive importance of these points, indicates a limitation of our procedure that can perhaps be a target for improvement via future research.


We test Algorithm~\ref{alg:PHA} and Algorithm~\ref{alg:targeted-tilting} when used independently, as well as in combination.
The purpose of this is to test the strength of targeted tilting relative to non-tilted hyperplanes as in Algorithm~\ref{alg:PHA}.
When used together, we first perform Algorithm~\ref{alg:targeted-tilting} and afterwards perform non-tilted activations.
This is equivalent to inserting the steps of Algorithm~\ref{alg:targeted-tilting} before step~\ref{step:PHA:RHA-start} of Algorithm~\ref{alg:PHA}.

\subsection{Experimental setup}

\textbf{Parameters.}\ \ 
Several parameters are fixed throughout the experiments, while others are varied, summarized in Table~\ref{tab:params} and elaborated on below.

\begin{table}[htp]
    \centering
    \caption{Parameters that are varied and the values considered.}
    \label{tab:params}
    \begin{tabular}{ll}
        Parameter & Values Considered  \\
        \hline
        Hyperplane scoring function, $\mathcal{SC}$ & \{H1, H2, H3\} \\
        Objective functions, $\mathcal{O}$ & $\{(R),(V),(T),(S)\}$  \\
        Targeted tilting & \{On, Off\} \\
        \# non-tilted hyperplanes, $k_h$ & $\{0,1,2,3,4\}$
    \end{tabular}
\end{table}

The fixed parameters are as follows.
The sets used for cut generation are all the simple splits.
The time limits are one hour per set of parameters
and at most five seconds per objective function for each time
\eqref{CutLP} is resolved.
The instances are not preprocessed,
and presolve is turned off for \eqref{CutLP},
as using presolve mildly reduced the average gap closed and led to erratic solver behavior, such as occasional crashes.
At most 1,000 GICs are generated per instance
(a limit seldom attained).
As the cardinality of $\pointset$ may be large, using all the intersection points as objective directions for \eqref{CutLP} may be prohibitively expensive; as a result, we limit the number of points used for objective functions to 1,000 per instance.
The overall zero tolerance is set to $10^{-7}$,
while the fractionality tolerance is set to $10^{-3}$; i.e.,
a component is considered fractional if it is at least $10^{-3}$ from the nearest integer.
%
To avoid numerical instability, cuts with dynamism higher than $10^6$ are rejected,
where dynamism is the ratio of the largest and smallest cut coefficients.
A one hour time limit is used per instance.

The parameters we modify are used to evaluate the effect of the choice of
hyperplanes to be activated and objective functions to be used for~\eqref{CutLP}.
In Algorithm~\ref{alg:PHA}, we vary $k_h$ from $0$ to $4$, where $k_h = 0$ implies we do not perform any non-tilted activations.
Section~\ref{sec:params:hplanes} contains computational results relating to the hyperplane activation rules described in Section~\ref{subsec:hplane-choices}.
Section~\ref{sec:params:cut-heurs} presents the results for the objective functions discussed in Section~\ref{subsec:obj-choices}.

\noindent\mbox{}\\
\textbf{Cut generation.}\ \ 
The indices $\sigma$ used for defining the split sets for cut generation are those of the fractional integer variables at $\lpopt$.
First, we generate one SIC for each $\Sk$, $k \in \sigma$.
We then generate GICs using Algorithms \ref{alg:PHA} and \ref{alg:targeted-tilting}, as described above.

We generate cuts in the nonbasic space. 
The vertex $\lpopt$ of $\bar{C}$ in this space corresponds to the zero vector, and the $j$th ray of $\optcone$ in the nonbasic space has a single nonzero $j$th entry. 
As a result, the intersection points or rays defining SICs (one pivot from $\lpopt$) and GICs from PHA on $\optcone$ (two pivots from $\lpopt$) have one and two nonzero coordinates, respectively. 
This feature leads to a sparse constraint coefficient matrix of \eqref{CutLP}, which
improves the time complexity of implementing Algorithm~\ref{alg:RHA1}.
In addition, it implies that it is sufficient to consider cuts with $\beta = 1$ to obtain all valid inequalities that cut $\lpopt$.

The objective functions tested are those discussed in Section~\ref{subsec:obj-choices}.
Assuming all objective functions are utilized, the order in which these are used in the code is $(S)$, $(T)$, $(R)$, then $(V)$.

As our procedure is not recursive, all the generated cuts have split rank 1 with respect to $P$, and in particular, they all valid for the first split closure.
Neither the SICs nor the GICs are strengthened through the integral nonbasic variables, as conventional strengthening approaches require information not available through \eqref{CutLP}.

\noindent\mbox{}\\
\textbf{Environment.}\ \ 
All algorithms are implemented in \Cpp\ in the COIN-OR framework~\cite{COIN-OR}, 
using \texttt{Clp\,version\,1\!.\!16} as the linear programming solver.
The machine used is a shared 64-bit \texttt{PowerEdge\,R515} with 48GB
of memory and twelve \texttt{AMD\,Opteron\,4176} processors clocked at 2.4GHz.
The operating system is \texttt{Red Hat Enterprise\,Linux\,7\!.\!6}
and the compiler is \texttt{g++\,version\,4\!.\!8\!.\!5\,20150623\,(Red Hat 4\!.\!8\!.\!5\!-\!36)}.
Experiments with instance \texttt{mod008} and the second round of cuts were run on a separate shared \texttt{SuperServer\,6019P-WTR} machine with 512GB of memory running \texttt{Orace\,Linux\,7\!.5} on 32 \texttt{Intel\,Xeon\,Gold\,6142} 2.6GHz processors.

\noindent\mbox{}\\
\textbf{Cut evaluation.}\ \ 
The metric we use to evaluate the cuts obtained from a specific set of parameters is \emph{percent gap closed}.. 
The gap is defined as the difference between the optimal values of the integer program
and its linear programming relaxation.
Denoting the optimal value of the integer program by $\OPTIP$,
of its linear relaxation by $\OPTLP$, and of the linear relaxation with cuts added by $\OPTCuts$,
we have
  \[
    \GAP := 100 \times ({\OPTCuts - \OPTLP})/({\OPTIP - \OPTLP}).
  \]
The baseline we use is the percent
gap closed by SICs, which are the simplest GICs.
We then add GICs along with the SICs
to assess what additional effect GICs
have on the percent gap closed
in the presence of the SICs.

\noindent\mbox{}\\
\textbf{Instance selection.}\ \ 
We test forty instances selected from {MIPLIB}~\cite{MIPLIB,MIPLIB3,MIPLIB2003,MIPLIB2010} (all versions) 
based on the following criteria meant to identify small problems
so that many different parameter settings can be tested (over 200 in our experiments):
		(1)~The number of rows and number of columns must be no more than 500 each.
		(2)~The instance has to be integer-feasible 
        with $\OPTLP < \OPTIP$,
        and the gap closed by SICs is not 100\%.
		(3)~There must be at least one non-final intersection point 
        created from intersecting the rays of $\optcone$ with $\bd{S}$.
        (4)~The instance must not be known to have 0\% gap closed from split cuts
        based on previous experiments~\cite{BalSax08,DasGunLod10}.
Criterion~3 exists because we do not cut rays of $\optcone$ that do not intersect $\bd S$,
so if all intersection points are final, no hyperplanes will be activated.

We modify the \texttt{stein15}, \texttt{stein27}, and \texttt{stein45} instances 
to reduce symmetry, by replacing the objective $\sum_{j=1}^n x_j$ with $\sum_{j=1}^n j x_j$.
We also remove the cardinality constraint $\sum_{j=1}^n x_j \ge (n-1)/2$,
as this is not present in the initial formulation of these instances~\cite{FulNemTro74}.
In addition, we exclude instances \texttt{go19} and \texttt{pp08aCUTS}: the former because its continuous relaxation solves exceptionally slowly, and the latter because it is simply a strengthened version of \texttt{pp08a}.

\subsection{Point-ray collection statistics}
\label{subsec:pr-stats}

The following statistics are averaged across all the instances and all the splits used for each instance.
First, of the rays of the initial cone $\optcone$, 55\% belong to $\initrayset$, i.e., do not intersect $\bd{S}$.
This 
demonstrates the impact of Proposition~\ref{prop:parallel-rays} and our resulting decision to always set the parameter $\rayset_A$ of Algorithm~\ref{alg:RHA1} to 
a subset of the rays of $\optcone$ that intersect $\bd{S}$.
An additional 5\% of the initial rays on average lead to final intersection points (with a range of 0\% to 34\%),
leaving possibly few rays that can be cut for some instances.
For six instances, less than 10\% of the rays are able to be cut.

We can also provide an idea of the number of rows in \eqref{CutLP} in practice, i.e., the number of points and rays we generate, for the best combination of parameters for each instance.
On average across all splits, each \eqref{CutLP} contains about 2,000 points 200 rays.
There exist instances with split sets leading to as few as 2 and as many as 46,000 points,
and as few as 0 and as many as 4,000 rays.
On average, fewer than 10\% of the points and rays in the collection are final (with a range of 0.1\% to 75\%).
Figure~\ref{fig:pointsvsrays} plots the number of generated points across instances (for the best parameter combination for each instance) as a function of the number of rays of $\optcone$ that can be cut;
we observe a quadratic relationship, as predicted by our analysis in Section~\ref{sec:PHA}.
%

\begin{figure}[htp]
    \centering
    \includegraphics[scale=.4]{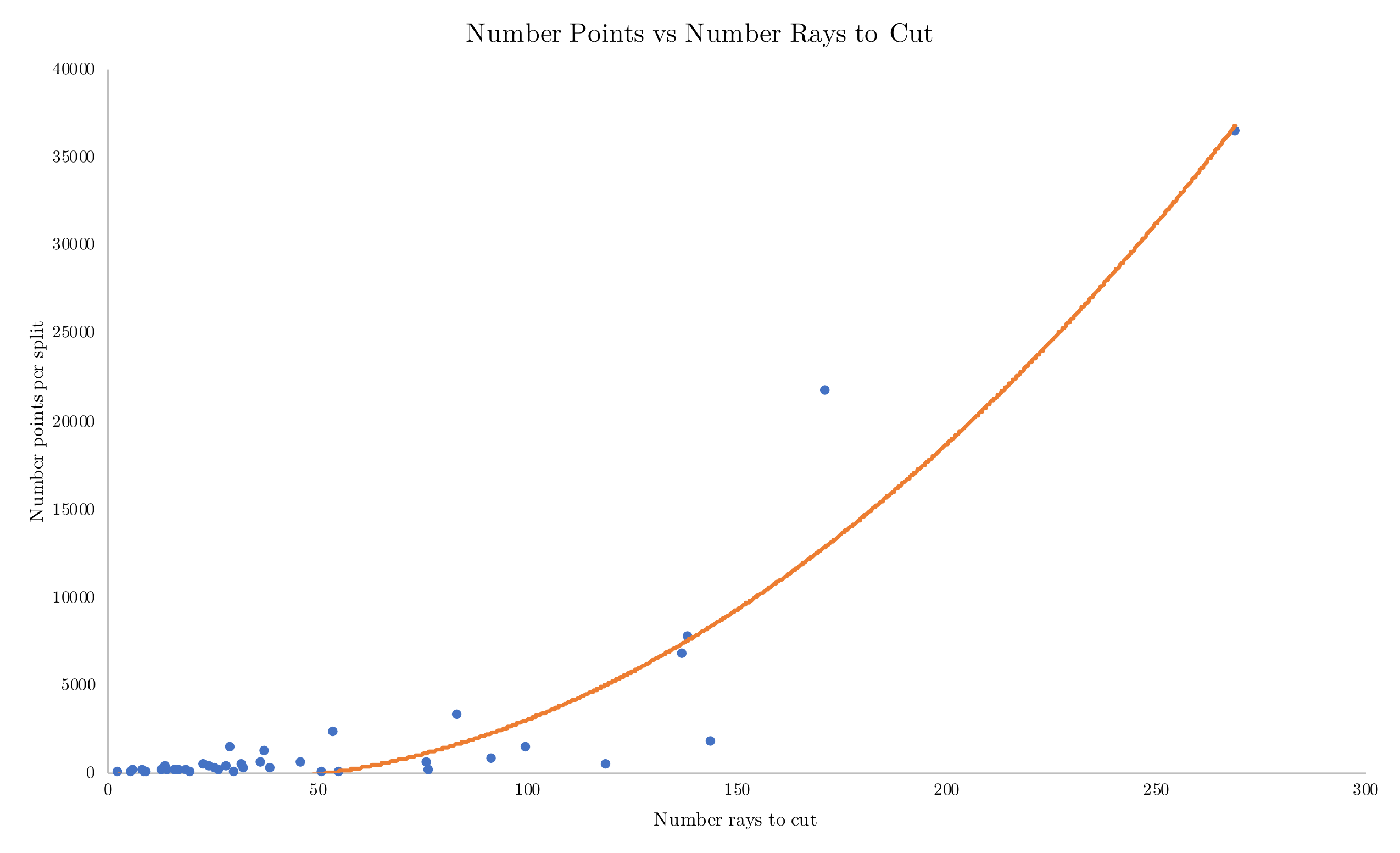}
    \caption{Number generated points versus number of rays that can be cut.}
    \label{fig:pointsvsrays}
\end{figure}

\subsection{Effect of hyperplanes activated}
\label{sec:params:hplanes}

We now analyze how the choice of hyperplanes to activate, as described in Section~\ref{subsec:hplane-choices}, affects the percent gap closed by GICs.
In addition, in Section~\ref{sec:tilting}, we discussed that, theoretically, neither Algorithm~\ref{alg:PHA} nor targeted tilting is strictly stronger than the other.
%
As a result, the computational experiments we perform are necessary to assess the practical strength of each approach.

%
%


\begin{singlespace}
  \begin{adjustbox}{
	max totalheight =\textwidth,
	rotate=90,
    center,
    captionabove={Best percent gap closed by hyperplane activation choice for the 30 instances with any improvement over SICs across all settings. 
    		We highlight the cells that achieve the best result, but only if it is the first time this result is attained using that particular combination of Algorithms~\ref{alg:PHA} and \ref{alg:targeted-tilting}.
    		Some values differ in the thousandths digit.},
    label={tab:hplane-analysis},
    float=table
}
    \begin{tabular}{lccH|*{3}{c}|*{5}{c}|*{4}{c}}
    \toprule
      {} & {} & {} & {} & \multicolumn{3}{c|}{Only Alg. 3} & \multicolumn{5}{c|}{Alg. 3 with Alg. 2} & \multicolumn{4}{c}{Only Alg. 2}\\
      {Instance} & {SIC} & {Best} & {Diff} & {H1} & {H2} & {H3} & {T+0} & {T+1} & {T+2} & {T+3} & {T+4} & {+1} & {+2} & {+3} & {+4}\\
      \midrule
	  {bell3a} & {44.74} & {59.52} & {14.77} & {58.94} & {58.49} & {58.94} & {58.94} & {58.94} & {58.94} & {58.94} & {58.94} & {59.07} & \goodcell{59.52} & {59.52} & {59.52}\\
      {bell3b} & {44.57} & {60.34} & {15.76} & {51.39} & {53.45} & {51.39} & {53.45} & {53.45} & {53.45} & {53.45} & {53.45} & {60.29} & {60.30} & \goodcell{60.34} & {60.34}\\
      {bell4} & {23.37} & {26.54} & {3.17} & {23.82} & {26.35} & {23.83} & {26.36} & {26.35} & \goodcell{26.54} & {26.54} & {26.54} & {24.67} & {24.67} & {25.26} & {25.26}\\
      {bell5} & {14.53} & {85.37} & {70.85} & {17.58} & {19.99} & {17.58} & {19.99} & {19.99} & {19.99} & {19.99} & {19.99} & \goodcell{85.37} & {85.37} & {85.37} & {85.37}\\
      {blend2} & {16.04} & {19.78} & {3.75} & {19.78} & {17.23} & {19.78} & {19.78} & \goodcell{19.78} & {19.78} & {19.78} & {19.78} & {16.33} & {17.36} & {17.36} & {17.36}\\
      {bm23} & {5.92} & {11.06} & {5.14} & {8.79} & {9.78} & {8.92} & {9.78} & {9.78} & {9.78} & {9.78} & {9.78} & {8.00} & {10.65} & \goodcell{11.06} & {11.06}\\
      {egout} & {51.57} & {52.11} & {0.54} & {51.59} & {51.60} & {51.59} & {51.60} & {51.60} & {51.60} & {51.60} & {51.60} & \goodcell{52.11} & {52.11} & {52.11} & {52.11}\\
      {gt2} & {83.13} & {84.26} & {1.13} & {83.13} & {83.13} & {83.13} & {83.13} & {83.13} & {83.13} & {83.13} & {83.13} & \goodcell{84.26} & {84.26} & {84.26} & {84.26}\\
      {k16x240} & {7.56} & {7.71} & {0.15} & {7.56} & {7.65} & {7.56} & {7.65} & {7.65} & {7.65} & {7.65} & {7.65} & \goodcell{7.71} & {7.71} & {7.71} & {7.71}\\
      {lseu} & {4.57} & {4.65} & {0.08} & \goodcell{4.65} & {4.57} & \goodcell{4.65} & \goodcell{4.65} & {4.65} & {4.65} & {4.65} & {4.65} & {4.57} & {4.65} & {4.65} & {4.65}\\
      {mas74} & {3.30} & {4.31} & {1.01} & {4.12} & {3.90} & {4.06} & {4.12} & {4.12} & {4.12} & {4.12} & {4.12} & {4.30} & \goodcell{4.31} & {4.31} & {4.31}\\
      {mas76} & {2.37} & {2.49} & {0.13} & {2.37} & {2.37} & {2.37} & {2.37} & {2.37} & {2.37} & {2.37} & {2.37} & {2.38} & \goodcell{2.49} & {2.49} & {2.49}\\
      {mas284} & {0.38} & {0.51} & {0.13} & {0.45} & \goodcell{0.51} & {0.45} & \goodcell{0.51} & {0.51} & {0.51} & {0.51} & {0.51} & {0.39} & {0.39} & {0.39} & {0.39}\\
      {misc05} & {3.60} & {3.62} & {0.02} & {3.60} & {3.60} & {3.60} & {3.60} & {3.60} & {3.60} & {3.60} & {3.60} & {3.60} & {3.60} & {3.60} & \goodcell{3.62}\\
      {mod008} & {1.30} & {1.37} & {0.07} & {1.31} & {1.30} & {1.31} & {1.31} & {1.31} & {1.31} & {1.31} & {1.31} & {1.31} & \goodcell{1.37} & {1.37} & {1.37}\\
      {mod013} & {4.41} & {7.37} & {2.96} & {4.72} & {7.28} & {4.72} & {7.28} & \goodcell{7.37} & {7.37} & {7.37} & {7.37} & {4.41} & {4.72} & {4.72} & {7.28}\\
      {modglob} & {9.59} & {14.02} & {4.43} & {10.00} & {10.26} & {10.00} & {10.26} & {10.26} & {10.26} & {10.26} & {10.26} & {10.00} & {14.02} & \goodcell{14.02} & {14.02}\\
      {p0033} & {1.83} & {5.19} & {3.35} & {2.59} & \goodcell{5.19} & {2.59} & \goodcell{5.19} & {5.19} & {5.19} & {5.19} & {5.19} & {1.83} & {2.59} & {2.59} & {2.59}\\
      {p0282} & {3.67} & {5.12} & {1.45} & {4.09} & {4.50} & {4.09} & {4.50} & {4.50} & {4.77} & {4.77} & {4.77} & {4.65} & {4.81} & {4.81} & \goodcell{5.12}\\
      {p0291} & {27.78} & {40.12} & {12.34} & {27.93} & {34.21} & {27.93} & {34.21} & {34.21} & {34.21} & {34.21} & {34.21} & {39.80} & \goodcell{40.12} & {40.12} & {40.12}\\
      {pipex} & {0.81} & {1.43} & {0.62} & \goodcell{1.43} & \goodcell{1.43} & \goodcell{1.43} & \goodcell{1.43} & {1.43} & {1.43} & {1.43} & {1.43} & {1.43} & \goodcell{1.43} & {1.43} & {1.43}\\
      {pp08a} & {51.44} & {54.46} & {3.02} & {53.42} & {53.89} & {53.42} & {53.89} & {53.89} & {53.89} & {53.89} & {53.89} & {52.47} & {54.41} & \goodcell{54.46} & {54.46}\\
      {probportfolio} & {25.14} & {25.28} & {0.15} & {25.19} & \goodcell{25.28} & {25.19} & \goodcell{25.28} & {25.28} & {25.28} & {25.28} & {25.28} & {25.14} & {25.14} & {25.14} & {25.16}\\
      {sample2} & {5.86} & {13.14} & {7.28} & {5.86} & \goodcell{13.14} & {5.86} & \goodcell{13.14} & {13.14} & {13.14} & {13.14} & {13.14} & {5.86} & {5.86} & {5.86} & {5.86}\\
      {sentoy} & {10.38} & {14.00} & {3.62} & {12.19} & {12.45} & {12.39} & {12.45} & {12.45} & {12.45} & {12.45} & {12.45} & {10.38} & {13.89} & {13.89} & \goodcell{14.00}\\
      {stein15\_nosym} & {50.00} & {58.33} & {8.33} & {55.78} & {50.00} & {55.78} & {55.78} & {57.17} & {57.17} & {57.17} & {57.17} & {50.00} & {50.00} & \goodcell{58.33} & {58.33}\\
      {stein27\_nosym} & {7.41} & {8.78} & {1.37} & \goodcell{8.78} & {8.61} & \goodcell{8.78} & \goodcell{8.78} & {8.78} & {8.78} & {8.78} & {8.78} & {7.41} & {7.86} & {7.86} & {7.86}\\
      {stein45\_nosym} & {7.10} & {7.55} & {0.45} & {7.36} & \goodcell{7.55} & {7.36} & \goodcell{7.55} & {7.55} & {7.55} & {7.55} & {7.55} & {7.51} & {7.51} & {7.51} & {7.51}\\
      {vpm1} & {10.00} & {10.18} & {0.18} & {10.00} & \goodcell{10.18} & {10.00} & \goodcell{10.18} & {10.18} & {10.18} & {10.18} & {10.18} & {10.00} & {10.00} & {10.00} & {10.00}\\
      {vpm2} & {10.18} & {11.71} & {1.53} & {11.18} & {11.37} & {11.18} & {11.37} & {11.38} & {11.38} & {11.38} & {11.38} & {11.04} & {11.19} & {11.19} & \goodcell{11.71}\\
      \midrule
      {Average} & {17.75} & {23.34} & {5.59} & {19.32} & {19.98} & {19.33} & {20.28} & {20.33} & {20.35} & {20.35} & {20.35} & {21.88} & {22.41} & {22.72} & {22.84}\\
      {Wins} & & & & 3 & 7 & 3 & 9 & 11 & 12 & 12 & 12 & 4 & 10 & 15 & 19 \\
    \bottomrule
    \end{tabular}
  \end{adjustbox}
\end{singlespace}
\ifspringer
\else
  \vspace{-12pt}
\fi
Table~\ref{tab:hplane-analysis} shows the effect of hyperplane activation choices on gap closed
for the 30 instances in which GICs close additional gap over SICs.
All possible combinations of objective functions (discussed in Section~\ref{subsec:obj-choices}) for \eqref{CutLP} are used.
The best percent gap closed is shown per instance in: 
column~2 for SICs;
column~3 for GICs across all parameter settings; 
columns~4 through 6 for Algorithm~\ref{alg:targeted-tilting} (targeted tilting) using each of the hyperplane scoring functions (H1), (H2), (H3);
columns~7 through 11 for Algorithm~\ref{alg:targeted-tilting} used in conjunction with Algorithm~\ref{alg:PHA} (for $k_h \in \{0,1,2,3,4\}$, indicated by the column headers ``T+$k_h$''); and
columns~12 through 15 for Algorithm~\ref{alg:PHA} (with $k_h \in \{1,2,3,4\}$, indicated by column headers ``$+k_h$'').
Note that column 7 (T+0) is simply the best result from columns 4 through 6.
For each parameter setting, the last two rows give the average percent gap closed and the number of instances for which that parameter setting achieves the best percent gap closed.

The results show that Algorithm~\ref{alg:PHA} (with $k_h = 4$) closes 98\% of the best percent gap closed,
and that it achieves the best result across all settings for 63\% of the instances without using any targeted tilting.
Using $k_h = 1$ already achieves 96\% of the best percent gap closed, but this comes from strong results from just a few instances.
The marginal impact of activating more hyperplanes diminishes as $k_h$ increases.

The table also shows that the effect of targeted tilting is mixed.
Targeted tilting alone wins 9 times, meaning it achieves the best result for 30\% of the instances.
On 8 of these instances, Algorithm~\ref{alg:PHA} on its own does not attain the best result and only closes 0.2\% of the integrality gap averaged across the 8 instances, compared to 1.6\% from column ``T+0''.
On the other hand, targeted tilting may lead to weaker cuts,
as reflected in the fact that the best average percent gap closed is achieved by Algorithm~\ref{alg:PHA} on its own,
and that Algorithm~\ref{alg:PHA} seems to have a relatively insignificant impact on improving the results once targeted tilting has been used.
We see that for the 18 instances on which Algorithm~\ref{alg:PHA} uniquely wins (with $k_h=4$, from column ``+4''), Algorithm~\ref{alg:targeted-tilting} performs poorly.
Thus, it appears as though the two algorithms are typically not both effective on the same instance.

In our experiments, we find that though final intersection points are targeted, they represent a small percent (less than 16\% on average for these 30 instances) of all generated points.
This does not necessarily imply bad cuts:\
for example, GICs close an additional 8\% of the gap for \texttt{stein15\_nosym} over SICs alone,
but only 2 of the 168 points created are final.
Nevertheless, this suggests that one promising future direction to obtain stronger cuts
is to develop a procedure targeting more final intersection points, 
as this may lead to more facet-defining inequalities for 
  $\conv(P \setminus \interior{S})$.

\subsection{Evaluating objective function choices}
\label{sec:params:cut-heurs}


\begin{singlespace}
  \begin{adjustbox}{
max totalheight = 0.83\textwidth,
    rotate=90,
    center,
    captionabove={Best percent gap closed by objective function used for the 30 instances with any improvement over SICs across all settings. 
    		Highlighted cells indicate objectives (or combinations of objectives) achieving best result.
		For the combinations, the cell is highlighted only if using the combination is actually necessary to achieve the result.
    		Some values differ in the thousandths digit.},
    label={tab:cut-heur-analysis},
    float=table,
}
    \begin{tabular}{lc|*{4}{c}|*{6}{c}|*{4}{c}}
    \toprule      
    	{Instance} & {Best} & {R} & {S} & {T} & {V} & {R+S} & {R+T} & {R+V} & {S+T} & {S+V} & {T+V} & {R+S+T} & {R+S+V} & {R+T+V} & {S+T+V}\\
      \midrule
      {bell3a} & {59.52} & \goodcell{59.52} & {57.14} & \goodcell{59.52} & {59.50} & {59.52} & {59.52} & {59.52} & {59.52} & \goodcell{59.52} & {59.52} & {59.52} & {59.52} & {59.52} & {59.52}\\
      {bell3b} & {60.34} & \goodcell{60.34} & {52.71} & \goodcell{60.34} & \goodcell{60.34} & {60.34} & {60.34} & {60.34} & {60.34} & {60.34} & {60.34} & {60.34} & {60.34} & {60.34} & {60.34}\\
      {bell4} & {26.54} & {26.34} & {26.19} & \goodcell{26.54} & {25.90} & {26.34} & {26.54} & {26.34} & {26.54} & {26.27} & {26.54} & {26.54} & {26.34} & {26.54} & {26.54}\\
      {bell5} & {85.37} & \goodcell{85.37} & {65.85} & \goodcell{85.37} & \goodcell{85.37} & {85.37} & {85.37} & {85.37} & {85.37} & {85.37} & {85.37} & {85.37} & {85.37} & {85.37} & {85.37}\\
      {blend2} & {19.78} & {16.56} & {17.70} & {17.36} & {19.35} & {19.71} & {17.36} & {19.35} & {17.70} & \goodcell{19.78} & {19.35} & {19.71} & {19.78} & {19.35} & {19.78}\\
      {bm23} & {11.06} & {9.48} & {10.67} & {11.04} & {10.21} & {10.68} & {11.04} & {10.43} & \goodcell{11.06} & {10.68} & {11.04} & {11.06} & {10.68} & {11.04} & {11.06}\\
      {egout} & {52.11} & \goodcell{52.11} & {51.57} & {51.60} & \goodcell{52.11} & {52.11} & {52.11} & {52.11} & {51.60} & {52.11} & {52.11} & {52.11} & {52.11} & {52.11} & {52.11}\\
      {gt2} & {84.26} & \goodcell{84.26} & {83.13} & \goodcell{84.26} & \goodcell{84.26} & {84.26} & {84.26} & {84.26} & {84.26} & {84.26} & {84.26} & {84.26} & {84.26} & {84.26} & {84.26}\\
      {k16x240} & {7.71} & \goodcell{7.71} & \goodcell{7.71} & \goodcell{7.71} & \goodcell{7.71} & {7.71} & {7.71} & {7.71} & {7.71} & {7.71} & {7.71} & {7.71} & {7.71} & {7.71} & {7.71}\\
      {lseu} & {4.65} & \goodcell{4.65} & {4.61} & \goodcell{4.65} & \goodcell{4.65} & {4.65} & {4.65} & {4.65} & {4.65} & {4.65} & {4.65} & {4.65} & {4.65} & {4.65} & {4.65}\\
      {mas74} & {4.31} & {4.30} & \goodcell{4.31} & {4.13} & {4.30} & {4.31} & {4.30} & {4.30} & {4.31} & {4.31} & {4.30} & {4.31} & {4.31} & {4.30} & {4.31}\\
      {mas76} & {2.49} & {2.49} & \goodcell{2.49} & {2.38} & {2.48} & {2.49} & {2.49} & {2.49} & {2.49} & {2.49} & {2.48} & {2.49} & {2.49} & {2.49} & {2.49}\\
      {mas284} & {0.51} & {0.42} & \goodcell{0.51} & {0.39} & {0.38} & {0.51} & {0.42} & {0.42} & {0.51} & {0.51} & {0.39} & {0.51} & {0.51} & {0.42} & {0.51}\\
      {misc05} & {3.62} & {3.60} & {3.60} & \goodcell{3.62} & {3.60} & {3.60} & {3.62} & {3.60} & {3.62} & \goodcell{3.62} & {3.62} & {3.62} & {3.62} & {3.62} & {3.62}\\
      {mod008} & {1.37} & {1.37} & {1.37} & {1.34} & {1.33} & \goodcell{1.37} & {1.37} & {1.37} & {1.37} & {1.37} & {1.34} & {1.37} & {1.37} & {1.37} & {1.37}\\
      {mod013} & {7.37} & \goodcell{7.37} & \goodcell{7.37} & \goodcell{7.37} & \goodcell{7.37} & {7.37} & {7.37} & {7.37} & {7.37} & {7.37} & {7.37} & {7.37} & {7.37} & {7.37} & {7.37}\\
      {modglob} & {14.02} & {14.02} & {13.79} & \goodcell{14.02} & \goodcell{14.02} & {14.02} & {14.02} & {14.02} & {14.02} & {14.02} & {14.02} & {14.02} & {14.02} & {14.02} & {14.02}\\
      {p0033} & {5.19} & {2.59} & \goodcell{5.19} & \goodcell{5.19} & {2.59} & {5.19} & {5.19} & {2.59} & {5.19} & {5.19} & {5.19} & {5.19} & {5.19} & {5.19} & {5.19}\\
      {p0282} & {5.12} & {4.81} & {4.50} & \goodcell{5.12} & {4.50} & {5.10} & {5.12} & {4.84} & {5.12} & {4.74} & {5.12} & {5.12} & {5.10} & {5.12} & {5.12}\\
      {p0291} & {40.12} & {39.96} & {40.11} & \goodcell{40.12} & {32.91} & {40.11} & {40.12} & {39.96} & {40.12} & {40.11} & {40.12} & {40.12} & {40.11} & {40.12} & {40.12}\\
      {pipex} & {1.43} & {1.43} & \goodcell{1.43} & \goodcell{1.43} & {1.43} & {1.43} & {1.43} & {1.43} & {1.43} & {1.43} & {1.43} & {1.43} & {1.43} & {1.43} & {1.43}\\
      {pp08a} & {54.46} & {53.50} & {51.81} & \goodcell{54.46} & {54.45} & {54.41} & {54.46} & {54.45} & {54.46} & {54.45} & {54.46} & {54.46} & {54.45} & {54.46} & {54.46}\\
      {probportfolio} & {25.28} & \goodcell{25.28} & {25.24} & {25.26} & {25.20} & {25.28} & {25.28} & {25.28} & {25.26} & {25.24} & {25.26} & {25.28} & {25.28} & {25.28} & {25.26}\\
      {sample2} & {13.14} & {9.07} & {8.59} & \goodcell{13.14} & {5.86} & {10.64} & {13.14} & {9.07} & {13.14} & \goodcell{13.14} & {13.14} & {13.14} & {13.14} & {13.14} & {13.14}\\
      {sentoy} & {14.00} & {13.62} & {13.89} & {13.89} & {13.92} & {13.89} & {13.89} & \goodcell{14.00} & {13.89} & {13.92} & {13.92} & {13.89} & {14.00} & {14.00} & {13.92}\\
      {stein15\_nosym} & {58.33} & \goodcell{58.33} & {54.67} & \goodcell{58.33} & {54.17} & {58.33} & {58.33} & {58.33} & {58.33} & {55.14} & {58.33} & {58.33} & {58.33} & {58.33} & {58.33}\\
      {stein27\_nosym} & {8.78} & {8.61} & {7.86} & \goodcell{8.78} & {8.60} & {8.61} & {8.78} & {8.62} & {8.78} & {8.60} & {8.78} & {8.78} & {8.62} & {8.78} & {8.78}\\
      {stein45\_nosym} & {7.55} & {7.51} & {7.45} & {7.51} & {7.49} & {7.51} & {7.51} & \goodcell{7.55} & {7.51} & {7.49} & {7.51} & {7.51} & {7.55} & {7.55} & {7.51}\\
      {vpm1} & {10.18} & {10.04} & {10.04} & \goodcell{10.18} & \goodcell{10.18} & \goodcell{10.18} & {10.18} & {10.18} & {10.18} & {10.18} & {10.18} & {10.18} & {10.18} & {10.18} & {10.18}\\
      {vpm2} & {11.71} & \goodcell{11.71} & {11.18} & {11.60} & \goodcell{11.71} & {11.71} & {11.71} & {11.71} & {11.60} & {11.71} & {11.71} & {11.71} & {11.71} & {11.71} & {11.71}\\
      \midrule
      {Average} & {23.34} & {22.88} & {21.76} & {23.22} & {22.53} & {23.23} & {23.25} & {23.06} & {23.25} & {23.19} & {23.32} & {23.34} & {23.32} & {23.33} & {23.34}\\
		{Wins}   &  & 11 & 7 & 19 & 10 & 18 & 22 & 15 & 23 & 19 & 21 & 27 & 24 & 24 & 26 \\
    \bottomrule
    \end{tabular}
  \end{adjustbox}
\end{singlespace}
\ifspringer
\else
  \vspace{-12pt}
\fi
Next, we examine which objective functions for \eqref{CutLP} lead to the strongest cuts.
Table~\ref{tab:cut-heur-analysis} shows the effect of the possible values of the parameter $\mathcal{O}$
when all other parameters are allowed to vary freely.
The second column gives the best percent gap closed across all parameter settings.
The next four columns provides the best percent gap closed when each of the objective functions is used alone.
The following six columns show the best percent gap closed for all pairs of objective functions,
while the last four columns give the values for any three objective functions.
The highlighted cells are those that achieve the best percent gap closed, but only when that combination of objectives was required to achieve this; by this, we mean that the same result must not have been achieved with any subset of these objectives.
The last row for each column shows the number of instances for which that set of objective functions attains the best percent gap closed.

In our experiments, the objective functions ($T$) (targeting the points in the point-ray collection) on their own work extremely well, closing nearly all of the possible gap on average, and achieving the best result for $19/30$ of the instances.
Using ($R$) (cutting along the directions of the rays of $\optcone$) and ($T$) together achieves the best result across all parameter settings for 22 of the 30 instances.
Moreover, 67\% of the cuts that are active at the post-cut optimum come from procedures ($R$) and ($T$), on average.
Note that the number of possible objectives $(R)$ is $n$, while the number of objectives $(T)$ depends on the size of the point-ray collection, which may be far larger than $n$.

We also tested a bilinear program
that finds a maximally violated cut with respect to a vertex of $P$ with all SICs added,
but it did not yield additional strong cuts.

\subsection{Strength of GICs}
\label{sec:gap-closed}

Finally, we look at the best percent gap closed across all parameter settings.
We compare the percent gap closed by using GICs and SICs together to using SICs on their own.
When the GICs close an additional amount of the integrality gap,
it is clear that we are getting a tighter relaxation of the integral hull.
The converse is not true; that the extra gap closed by GICs is zero 
does not imply that the cuts have no effect on tightening the relaxation.

\begin{singlespace}
  \begin{adjustbox}{
    max totalheight = 0.95\textheight,
    max width = \textwidth,
    center,
    captionabove={Best percent gap closed and number of cuts.},
    label={tab:gap-closed},
    float=table
}
    \begin{tabular}{l*{2}{H}*{4}{H}cc>{\bfseries}c*{4}{c}@{}}
    \toprule
      {} & {} & {} & \multicolumn{4}{H}{Opt} & \multicolumn{3}{c}{Best \% gap closed} & \multicolumn{4}{c}{\# cuts}\\
      \cmidrule(r){8-10}\cmidrule(r){11-14}
      {Instance} & {Rows} & {Cols} & {LP} & {IP} & {SIC} & {GIC+SIC} & {SIC} & {GIC} & {Diff} & {SICs} & {Active SICs} & {GICs} & {Active GICs}\\
      \midrule
bell3a & 123 & 133 & 862578.6435 & 878430.32 & 869671.429 & 872013.0711 & 44.74 & 59.52 & 14.77 & 32 & 15 & 95 & 36 \\
bell3b & 123 & 133 & 11404143.89 & 11786160.62 & 11574411.7 & 11634635.78 & 44.57 & 60.34 & 15.76 & 35 & 21 & 80 & 28 \\
bell4 & 105 & 117 & 17984775.91 & 18541484.2 & 18114885.46 & 18132539.29 & 23.37 & 26.54 & 3.17 & 46 & 20 & 968 & 21 \\
bell5 & 91 & 104 & 8608417.947 & 8966406.49 & 8660422.457 & 8914044.142 & 14.53 & 85.37 & 70.85 & 25 & 12 & 62 & 38 \\
blend2 & 274 & 353 & 6.915675114 & 7.598985 & 7.025271685 & 7.050862419 & 16.04 & 19.78 & 3.75 & 6 & 1 & 563 & 9 \\
bm23 & 20 & 27 & 20.57092176 & 34 & 21.36652935 & 22.0561593 & 5.92 & 11.06 & 5.14 & 6 & 0 & 1000 & 9 \\
egout & 98 & 141 & 149.5887662 & 568.1 & 365.4230979 & 367.6649798 & 51.57 & 52.11 & 0.54 & 38 & 36 & 45 & 41 \\
flugpl & 18 & 18 & 1167185.726 & 1201500 & 1171213.717 & 1171213.717 & 11.74 & 11.74 & 0.00 & 10 & 6 & 5 & 3 \\
gt2 & 29 & 188 & 13460.23307 & 21166 & 19866.00843 & 19952.77461 & 83.13 & 84.26 & 1.13 & 11 & 11 & 17 & 13 \\
k16x240 & 256 & 480 & 2769.838 & 10674 & 3367.009157 & 3378.93385 & 7.56 & 7.71 & 0.15 & 14 & 6 & 25 & 12 \\
lseu & 28 & 89 & 834.6823529 & 1120 & 847.7260533 & 847.9567036 & 4.57 & 4.65 & 0.08 & 12 & 6 & 87 & 9 \\
mas74 & 13 & 151 & 10482.79528 & 11801.19 & 10526.35577 & 10539.61687 & 3.30 & 4.31 & 1.01 & 12 & 1 & 1000 & 48 \\
mas76 & 12 & 151 & 38893.90364 & 40005.05 & 38920.20713 & 38921.62092 & 2.37 & 2.49 & 0.13 & 11 & 2 & 1000 & 29 \\
mas284 & 68 & 151 & 86195.86303 & 91405.7237 & 86215.8992 & 86222.50124 & 0.38 & 0.51 & 0.13 & 20 & 1 & 1000 & 29 \\
misc05 & 300 & 136 & 2930.9 & 2984.5 & 2932.831551 & 2932.839918 & 3.60 & 3.62 & 0.02 & 11 & 3 & 644 & 14 \\
mod008 & 6 & 319 & 290.9310727 & 307 & 291.140734 & 291.151684 & 1.30 & 1.37 & 0.07 & 5 & 0 & 865 & 2 \\
mod013 & 62 & 96 & 256.0166667 & 280.95 & 257.1166667 & 257.853869 & 4.41 & 7.37 & 2.96 & 5 & 2 & 57 & 12 \\
modglob & 291 & 422 & 20430947.62 & 20740508 & 20460632.83 & 20474361.05 & 9.59 & 14.02 & 4.43 & 29 & 14 & 321 & 71 \\
p0033 & 16 & 33 & 2520.571739 & 3089 & 2530.991887 & 2550.045406 & 1.83 & 5.19 & 3.35 & 6 & 4 & 31 & 7 \\
p0040 & 23 & 40 & 61796.54505 & 62027 & 61811.87593 & 61811.87593 & 6.65 & 6.65 & 0.00 & 4 & 4 & 7 & 7 \\
p0282 & 241 & 282 & 176867.5033 & 258411 & 179863.2035 & 181043.164 & 3.67 & 5.12 & 1.45 & 26 & 4 & 975 & 26 \\
p0291 & 252 & 291 & 1705.128761 & 5223.749 & 2682.677025 & 3116.832133 & 27.78 & 40.12 & 12.34 & 10 & 2 & 105 & 9 \\
pipex & 25 & 48 & 773.751062 & 788.263 & 773.8690854 & 773.959297 & 0.81 & 1.43 & 0.62 & 6 & 2 & 218 & 18 \\
pp08a & 136 & 240 & 2748.345238 & 7350 & 5115.289314 & 5254.445415 & 51.44 & 54.46 & 3.02 & 53 & 43 & 395 & 75 \\
probportfolio & 302 & 320 & 5 & 16.73424676 & 7.949851807 & 7.966917109 & 25.14 & 25.28 & 0.15 & 125 & 69 & 1000 & 143 \\
sample2 & 45 & 67 & 247 & 375 & 254.5 & 263.8194444 & 5.86 & 13.14 & 7.28 & 12 & 2 & 227 & 8 \\
sentoy & 30 & 60 & -7839.278018 & -7772 & -7832.291896 & -7829.858818 & 10.38 & 14.00 & 3.62 & 8 & 0 & 547 & 11 \\
stein15\_nosym & 35 & 15 & 35 & 45 & 40 & 40.83333333 & 50.00 & 58.33 & 8.33 & 5 & 2 & 41 & 9 \\
stein27\_nosym & 117 & 27 & 126 & 207 & 132 & 133.1111111 & 7.41 & 8.78 & 1.37 & 27 & 3 & 1000 & 5 \\
stein45\_nosym & 330 & 45 & 349.6666667 & 594 & 367.0051245 & 368.1078405 & 7.10 & 7.55 & 0.45 & 45 & 2 & 1000 & 6 \\
timtab1 & 171 & 397 & 28694 & 764772 & 157786.5744 & 157786.5744 & 17.54 & 17.54 & 0.00 & 136 & 38 & 1 & 1 \\
vpm1 & 234 & 378 & 15.41666667 & 20 & 15.875 & 15.88333333 & 10.00 & 10.18 & 0.18 & 15 & 10 & 98 & 37 \\
vpm2 & 234 & 378 & 9.889264597 & 13.75 & 10.28229087 & 10.34122777 & 10.18 & 11.71 & 1.53 & 31 & 13 & 413 & 49 \\
\midrule
Average &  &  &  &  &  &  & 17.23 & 22.31 & 5.08 &  &  &  &  \\
\midrule
glass4 & 396 & 322 & 800002400 & 1200012600 & 800002400 & 800002400 & 0 & 0 & 0 & 72 & 36 & 421 & 81 \\
misc01 & 54 & 83 & 57 & 563.5 & 57 & 57 & 0 & 0 & 0 & 12 & 4 & 47 & 6 \\
misc02 & 39 & 59 & 1010 & 1690 & 1010 & 1010 & 0 & 0 & 0 & 8 & 8 & 7 & 7 \\
misc03 & 96 & 160 & 1910 & 3360 & 1910 & 1910 & 0 & 0 & 0 & 22 & 8 & 74 & 11 \\
misc07 & 212 & 260 & 1415 & 2810 & 1415 & 1415 & 0 & 0 & 0 & 26 & 12 & 603 & 220 \\
p0201 & 133 & 201 & 6875 & 7615 & 6875 & 6875 & 0 & 0 & 0 & 40 & 19 & 47 & 17 \\
rgn & 24 & 180 & 48.8 & 82.2 & 48.8 & 48.8 & 0 & 0 & 0 & 19 & 11 & 15 & 9 \\
\bottomrule
    \end{tabular}
  \end{adjustbox}
\end{singlespace}
\ifspringer
\else
  \vspace{-12pt}
\fi
Table~\ref{tab:gap-closed} shows the best result for percent gap closed for each instance, across all parameter settings.
The percent of the integrality gap closed by SICs and by GICs is given in columns~2 and 3.
Column~4 shows the difference between columns 3 and 2. 
Columns~5 and 6 show the number of SICs generated as well as how many of the SICs are active at the optimum of
the LP with all cuts (both SICs and GICs) added.
Columns~7 and 8 show the same for GICs.
The top part of the table shows those instances for which GICs or SICs close any gap over the LP relaxation, and the bottom part of the table contains the remaining instances.

GICs close extra gap over SICs for 75\% of the instances.
The average extra percent gap closed across all instances is around 4.2\%,
around 5.1\% on those instances in which SICs have any effect,
and around 5.6\% on those instances with nonzero extra gap closed by GICs (representing a 31\% improvement over using SICs alone).

Another possible way to assess strength of cuts is how many of the cuts are active 
at the optimum of the continuous relaxation after the addition of all cuts.
On average, over 70\% of the active cuts are GICs.
Moreover, one can select for the strong cuts with reasonable success.
We adopt a common cut selection criterion (see, e.g., \cite{AchBerKocWol08}) that sorts cuts based on a combination of their \emph{efficacy} (the Euclidean distance by which they cut $\lpopt$) and their orthogonality to cuts that have already been selected.
Adding the cuts in this way and setting a cut limit of the same number of GICs as there are SICs, we can close 20.94\% of the integrality gap, compared to the 22.31\% gap closed using all the GICs.
With a limit of five times as many GICs as SICs, we close 22.26\% of the integrality gap, that is, over 99\% of the improvement from adding all GICs;
for comparison, on average across the instances, our cut limit of 1,000 results in about 30 times as many GICs as there are SICs.

Although our procedure is intended to be nonrecursive, we ran a small experiment to test the effect of using GICs with two rounds.
These results are shown in Table~\ref{tab:gap-closed-rd2}.
The second and third columns display the percent gap closed after one round of SICs and one round of GICs (added on top of the SICs, as before).
The fourth and fifth columns show the percent gap closed after two rounds of SICs and a second round of GICs (added on top of the first round of SICs and first round of GICs).
The next two columns give the number of SICs and GICs generated in the second round.
The last column provides the number of rounds of SICs needed to match the gap closed by the second round of GICs.
The three instances marked with asterisks in this last column (\texttt{bell5}, \texttt{flugpl}, and \texttt{mas74}) do not achieve the same gap closed as two rounds of GICs.
The first two instances are terminated when as many SICs are generated as GICs.
For example, for \texttt{bell5}, the 5 rounds of SICs correspond to 97 SICs that together close only 25\% of the integrality gap (this is not shown in the table), compared to 96 GICs that close nearly 87\%.
For \texttt{mas74}, SIC generation tails off, with the objective value improving less than $10^{-3}$ in the last five rounds of SICs 
(for reference, matching the result of two GIC rounds would require an additional 0.15 improvement in the objective value).

Lastly, although GICs can be generated from any convex cut-generating set, in our experiments, we limit ourselves to variable split disjunctions.
As a result, it may be useful to compare GICs to lift-and-project cuts~\cite{BalCerCor93}, which can generate the facets of each of the sets $\conv(P \setminus \interior S_k)$.
This comparison is given in Table~\ref{tab:lpc}, where the gap closed by lift-and-project cuts is taken from the computational experiments by \citet{Bonami12}, for those instances appearing in both test sets.
The table indicates that there is a high degree of correlation between the instances on which lift-and-project cuts are effective, and those on which GICs are effective.

\begin{singlespace}
		\begin{table}
		\centering
		\caption{Gap closed over two rounds of cuts, compared between SICs and GICs. The first round is suffixed with a 1, and the second round is suffixed with a 2. The last column is the number of rounds of SICs needed to match the percent gap closed by two rounds of GICs. The two instances marked with asterisks do not achieve the same gap closed as GICs.}
		\label{tab:gap-closed-rd2}
		\begin{tabular}{l*{7}{@{\quad}c@{}}}
		\toprule
			& \multicolumn{4}{c}{\% gap closed} & \multicolumn{2}{c}{\# cuts} \\
			\cmidrule(r){2-5} \cmidrule(r){6-7}
		Instance & SIC1 & GIC1 & SIC2 & GIC2 & SIC2 & GIC2 &  SIC rds \\
		\midrule
bell3a & 44.74 & 59.52 & 63.65 & 63.26 & 55 & 206 & 2 \\
bell3b & 44.57 & 60.34 & 59.49 & 67.02 & 64 & 327 & 4 \\
bell4 & 23.37 & 26.54 & 45.21 & 37.24 & 81 & 1968 & 2 \\
bell5 & 14.53 & 85.37 & 17.93 & 86.65 & 43 & 96 & 5\textsuperscript{*} \\
blend2 & 16.04 & 19.78 & 21.11 & 21.18 & 16 & 1564 & 3 \\
bm23 & 5.92 & 11.06 & 10.40 & 13.45 & 12 & 1958 & 4 \\
egout & 51.57 & 52.11 & 90.89 & 92.48 & 60 & 94 & 3 \\
flugpl & 11.74 & 11.74 & 13.30 & 13.00 & 18 & 10 & 1\textsuperscript{*} \\
gt2 & 83.13 & 84.26 & 95.14 & 94.54 & 27 & 29 & 2 \\
k16x240 & 7.56 & 7.71 & 13.30 & 11.50 & 29 & 109 & 2 \\
lseu & 4.57 & 4.65 & 17.10 & 31.25 & 23 & 342 & 3 \\
mas74 & 3.30 & 4.31 & 4.06 & 4.65 & 27 & 2000 & 50\textsuperscript{*} \\
mas76 & 2.37 & 2.49 & 2.76 & 2.78 & 27 & 2000 & 3 \\
mas284 & 0.38 & 0.51 & 0.94 & 0.95 & 41 & 2000 & 3 \\
misc05 & 3.60 & 3.62 & 6.75 & 6.17 & 32 & 1644 & 2 \\
mod008 & 1.30 & 1.37 & 3.03 & 2.79 & 13 & 1170 & 2 \\
mod013 & 4.41 & 7.37 & 18.89 & 9.64 & 12 & 264 & 2 \\
modglob & 9.59 & 14.02 & 33.07 & 38.62 & 67 & 1321 & 3 \\
p0033 & 1.83 & 5.19 & 7.30 & 10.50 & 18 & 96 & 3 \\
p0040 & 6.65 & 6.65 & 8.04 & 8.55 & 8 & 21 & 3 \\
p0282 & 3.67 & 5.12 & 8.56 & 6.52 & 47 & 1342 & 2 \\
p0291 & 27.78 & 40.12 & 62.24 & 47.22 & 21 & 822 & 2 \\
pipex & 0.81 & 1.43 & 4.21 & 2.18 & 12 & 1218 & 2 \\
pp08a & 51.44 & 54.46 & 66.88 & 70.93 & 102 & 1395 & 3 \\
probportfolio & 25.14 & 25.28 & 26.07 & 25.83 & 218 & 2000 & 2 \\
sample2 & 5.86 & 13.14 & 17.58 & 24.63 & 26 & 1227 & 6 \\
sentoy & 10.38 & 14.00 & 11.97 & 15.19 & 20 & 770 & 11 \\
stein15\_nosym & 50.00 & 58.33 & 58.79 & 76.87 & 9 & 97 & 7 \\
stein27\_nosym & 7.41 & 8.78 & 32.41 & 34.84 & 52 & 1818 & 3 \\
stein45\_nosym & 7.10 & 7.55 & 9.32 & 11.21 & 90 & 2000 & 3 \\
timtab1 & 17.54 & 17.54 & 24.86 & 17.54 & 255 & 1 & 1 \\
vpm1 & 10.00 & 10.18 & 13.91 & 13.99 & 34 & 831 & 3 \\
vpm2 & 10.18 & 11.71 & 19.32 & 17.72 & 63 & 1228 & 2 \\
		\midrule
Average & 17.23 & 22.31 & 26.92 & 29.72 &  &  & 4.5 \\
		\bottomrule
		\end{tabular}
		\end{table}
\end{singlespace}
\ifspringer
\else
  \vspace{-12pt}
\fi

\begin{singlespace}
		\begin{table}
		\centering
		\caption{Comparison of percent gap closed by lift-and-project cuts (taken from the literature) with the gap closed by generalized intersection cuts.}
		\label{tab:lpc}
		\begin{tabular}{lcc@{}}
				\toprule
				& \multicolumn{2}{c}{\% gap closed} \\
				\cmidrule(l){2-3}
				Instance & L\&PC & GIC \\
				\midrule
				bell3a	& 64.56		&	59.52 \\
				bell5	& 86.25			& 85.37 \\
				blend2	& 21.82		& 19.78 \\
				egout	& 93.85		& 52.11 \\
				flugpl	& 11.72		& 11.74 \\
				gt2	& 92.38			& 84.26 \\
				lseu	& 16.58			& 4.65 \\
				mas74	& 5.47		& 4.31 \\
				mas76	& 3.68	& 2.49 \\
				mod008	& 9.02	& 1.37 \\
				modglob	& 57.09		& 14.02\\
				p0033	& 8.19	& 5.19 \\
				p0282	& 93.9	& 5.12 \\
				pp08a	& 79.29		& 54.46 \\
				timtab1	& 26.99		& 17.54 \\
				vpm1	& 31.42		& 10.18 \\
				vpm2	& 54.29 & 11.71 \\
				\midrule
				Average & 44.50 & 26.11\\
				\bottomrule
		\end{tabular}
		\end{table}
\end{singlespace}
\ifspringer
\else
  \vspace{-12pt}
\fi

\subsection{Summary}

The GICs generated from our PHA approach close a significant percentage of the integrality gap with respect to SICs.
This validates the premise of PHA, that stronger intersection cuts can be obtained from collecting a manageable number of intersection points and rays.
We examine the tradeoff in using {\PHAOne} with and without the targeted tilting algorithm. 
Using tilting permits more hyperplanes to be activated,
but it can create weaker points that are avoided by the procedure without tilting.
Moreover, our experiments identify one aspect of PHA that can be improved to lead to stronger cuts.
Only a small percentage of the points we generate are final, which indicates that the GICs generated from our approach are far from the split closure and motivates future work on new methods targeting such final intersection points.
We also evaluate several different objective functions that can be used in \eqref{CutLP} and determine one (the tight point heuristic) that seems particularly effective, which has implications for any future GIC computational experiments.
One of the motivations of the GIC procedure is the ability to generate diverse cuts, which we find is indeed possible.
However, we do not wish to add a large number of cuts to our LP relaxation. We show that in fact a a small set of diverse GICs achieves nearly the same result as using all the GICs.

  \bibliography{akazachk}
\bibliographystyle{plainnat}

\appendix

\section{Additional theory for PHA}
\label{app:theory}
\subsection{Existence of strictly dominating GICs}
\label{subsec:strictDom}
In this section, we provide some theoretical motivation for Algorithm~\ref{alg:targeted-tilting} by giving necessary and sufficient conditions for the existence of a GIC that strictly dominates the SIC after activation of a single hyperplane.

\begin{definition}[\cite{BalMar13}]
  Consider two inequalities that are valid for $P_I$ but not necessarily $P$. 
  Inequality 2 dominates 1 on $P$ if for every $x \in P$, the fact that $x$ satisfies
  Inequality 2 implies that $x$ satisfies Inequality 1. 
  Inequality 2 strictly dominates 1 if, 
  in addition, there exists $x \in P$ such that $x$ violates Inequality 2 but satisfies Inequality 1.
\end{definition}

The theorem proved in this section strengthens Theorem~5 in \citet{BalMar13} 
for the case when $S$ is a split disjunction. 
Theorem~5 of the aforementioned paper gives sufficient conditions for
a GIC to strictly dominate the SIC, given that dominance holds.
We show that this condition is also necessary for strict dominance when $S$ is a split disjunction. 
For ease of exposition, Theorem~\ref{thm:strictDom} assumes that all rays of $C$ intersect $\bd S$ because Proposition~\ref{prop:parallel-rays} shows that intersecting rays cannot lead to deeper points.

Suppose $S = \{x \suchthat 0 \le x_k \le 1\}$ is a split disjunction on a variable $x_k$.
Let $S\onfloor := \{x \suchthat x_k = 0\}$, and $S\onceil := \{x\suchthat x_k = 1\}$.
We partition the intersection point set $\pointset$ into $\pointset\onfloor$ and $\pointset\onceil$,
where $\pointset\onfloor := \pointset \cap S\onfloor$, 
and $\pointset\onceil := \pointset \cap S\onceil$.
Recall that $\initpointset$ and $\initrayset$ are the points and rays obtained 
from intersecting $\optcone$ with $\bd S$. 
We also partition $\initpointset$ into 
$\initpointset\onfloor := \initpointset \cap S\onfloor$ and 
$\initpointset\onceil := \initpointset \cap S\onceil$.
Intuitively, the theorem shows that a strictly dominating cut with respect to the SIC
must reduce the dimension of $\conv(\initpointset\onfloor)$ or $\conv(\initpointset\onceil)$.

\begin{theorem}\label{thm:strictDom}
  Suppose that $\initrayset = \emptyset$,
  $\initpointset\onfloor \ne \emptyset$ and $\initpointset\onceil \ne \emptyset$,
  and $(\pointset, \rayset)$ is a proper point-ray collection
  obtained from activating a single hyperplane $H$ valid for $P$.
  There exists a basic feasible solution 
  to \eqref{CutRegion}
  corresponding to a cut strictly dominating $\initcut$ if and only if
  $\relint (H^+) \cap \initpointset\facetindex{t} = \emptyset$
  and $H^- \cap \initpointset\facetindex{t} \ne \emptyset$, for at least one side $t$ 
  of the split disjunction, $t \in \{0,1\}$.
\end{theorem}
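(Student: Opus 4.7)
The plan is to analyze both directions via the restrictions of the SIC hyperplane $K^{SIC} := \{x : \initcutcoeff^\T x = \initcutRHS\}$ and any candidate cut hyperplane $K^{NEW} := \{x : \alpha^\T x = \beta\}$ to the two sides $S\onfloor$ and $S\onceil$ of the split. The $n$ points of $\initpointset$ are affinely independent and lie on $K^{SIC}$, so $K^{SIC}$ is the unique hyperplane through $\initpointset$ that cuts $\lpopt$. A valid cut $(\alpha, \beta)$ with $\alpha^\T \lpopt < \beta$ strictly dominates $\initcut$ on $P$ precisely when, on each side $t$, the restriction $K^{NEW} \cap S\facetindex{t}$ is no shallower than $K^{SIC} \cap S\facetindex{t}$ relative to the projection of $\lpopt$, with strict deepness on at least one side. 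I also use throughout that activating $H$ removes exactly the initial points in $H^- \cap \initpointset$ and introduces new intersection points only on $H \cap \bd{S}$.

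For the sufficient direction, assume without loss of generality that the hypothesis holds for $t = 0$. Every element of $\pointset\onfloor$ then lies on $H \cap S\onfloor$: uncut initial points lie on $H$ by $\relint(H^+) \cap \initpointset\onfloor = \emptyset$, and every new point produced by activation lies on $H \cap \bd{S}$. Because $H^- \cap \initpointset\onfloor \neq \emptyset$, some initial point on $S\onfloor$ lies off $H$, so $H \cap S\onfloor$ is a proper affine subspace of $K^{SIC} \cap S\onfloor$. I then construct the dominating cut by rotating $K^{SIC}$ about the affine subspace spanned by $K^{SIC} \cap S\onceil$ together with $H \cap S\onfloor$ toward $\lpopt$ by a small amount. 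The resulting hyperplane $K^{NEW}$ still cuts $\lpopt$, remains valid on the unchanged $\pointset\onceil$, and is strictly deeper than $K^{SIC}$ on $S\onfloor$ by construction. Extracting $n$ affinely independent tight elements of $\pointset$ on $K^{NEW}$ yields the required basic feasible solution to \eqref{CutRegion}.

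For the necessary direction, let $(\alpha, \beta)$ correspond to a basic feasible solution whose cut strictly dominates $\initcut$. If $H$ cuts no initial point, then $\initpointset \subseteq \pointset$; validity forces $\alpha^\T p \geq \beta$ on the $n$ affinely independent points of $\initpointset$, and together with $\alpha^\T \lpopt < \beta$ this uniquely determines $K^{NEW} = K^{SIC}$, contradicting strict dominance. So $H^- \cap \initpointset\facetindex{t} \neq \emptyset$ for some $t$, and by strict dominance some side $t$ has $K^{NEW} \cap S\facetindex{t}$ strictly deeper than $K^{SIC} \cap S\facetindex{t}$. Suppose for contradiction $p^\circ \in \relint(H^+) \cap \initpointset\facetindex{t}$, so that $p^\circ \in \pointset\facetindex{t}$. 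An analysis of strict dominance together with validity at $p^\circ$ forces $\alpha^\T p^\circ = \beta$. For each cut initial point $p^1 \in H^- \cap \initpointset\facetindex{t}$, the $2$-face of $\optcone$ spanned by the rays giving $p^1$ and $p^\circ$ meets $\bd{S}$ along the segment $[p^1, p^\circ] \subseteq S\facetindex{t}$, and is crossed by $H$ at a new point $q = \lambda p^1 + (1-\lambda) p^\circ \in \pointset\facetindex{t}$. Validity $\alpha^\T q \geq \beta$ combined with $\alpha^\T p^\circ = \beta$ yields $\alpha^\T p^1 \geq \beta$. Thus every initial point on side $t$ lies on the kept side of $K^{NEW}$, and a count of affinely independent tight elements of the basic feasible solution forces $K^{NEW} \cap S\facetindex{t} = K^{SIC} \cap S\facetindex{t}$, contradicting strict deepness.

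The main obstacle lies in justifying the tightness $\alpha^\T p^\circ = \beta$ in the necessary direction, which requires translating the global strict-dominance condition on $P$ into a local tightness statement at $p^\circ$. A careful argument is also needed to conclude $K^{NEW} \cap S\facetindex{t} = K^{SIC} \cap S\facetindex{t}$ from the kept-side inequalities on $\initpointset\facetindex{t}$ in the degenerate case $|\initpointset\facetindex{t}| < n-1$, where $\initpointset\facetindex{t}$ alone does not span $K^{SIC} \cap S\facetindex{t}$; here one must supplement with the basic-feasibility count of tight constraints from the opposite side to pin down the affine structure of $K^{NEW}$ restricted to $S\facetindex{t}$.
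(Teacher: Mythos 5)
Your proof has genuine gaps in both directions. In the ``if'' direction, the construction is ill-posed: you propose to rotate the SIC hyperplane about ``the affine subspace spanned by $K^{SIC} \cap S\onceil$ together with $H \cap S\onfloor$,'' but these are two $(n-2)$-dimensional flats lying in the parallel hyperplanes $S\onceil$ and $S\onfloor$ respectively, so their affine span is all of $\R^n$ unless their direction spaces happen to coincide; in general there is no hyperplane containing both and hence no axis to rotate about. (Relatedly, the assertion that $H \cap S\onfloor$ is ``a proper affine subspace of $K^{SIC} \cap S\onfloor$'' is false: both are $(n-2)$-flats in $S\onfloor$ and neither contains the other.) The paper's argument here is different and avoids this: it observes that since some point of $\initpointset\onfloor$ lies in $H^-$, at most $n-1$ of the $n$ initial intersection points survive into $\pointset$, and that $\relint(H^+)\cap\initpointset\onfloor=\emptyset$ forces every new point of $\pointset\onfloor$ to lie outside $\conv(\initpointset\onfloor)$ and hence strictly beyond the SIC; the freed-up degree of freedom plus the strict depth of the replacement points is what permits the tilt. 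Your tightness-based BFS extraction would need to be rebuilt on that counting argument rather than on a rotation axis that need not exist.

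In the ``only if'' direction, the step you yourself flag as the main obstacle --- deducing $\alpha^\T p^\circ = \beta$ from strict dominance --- is not merely unproved but genuinely problematic: dominance is defined \emph{on $P$}, whereas intersection points of $\optcone$ with $\bd S$ need not belong to $P$, so one cannot approach $p^\circ$ from within the region of $P$ cut off by the SIC to force tightness in the limit. The same issue undermines your opening claim that keeping all $n$ initial points ``uniquely determines $K^{NEW}=K^{SIC}$'': validity of $\alpha^\T p \ge \beta$ at $n$ affinely independent points does not pin down a hyperplane, and upgrading these inequalities to equalities again requires approaching the $p$'s through $P$. There is also a quantifier slip: you produce one side with $H^-\cap\initpointset\facetindex{t}\neq\emptyset$ and (possibly another) side on which the new cut is strictly deeper, but never verify that \emph{both} required conditions hold for the \emph{same} $t$, which is what the theorem asserts. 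The paper does not attempt this direction at all --- it invokes Theorem~5 of Balas and Margot, whose proof handles exactly these subtleties --- so if you want a self-contained argument you must close the $p^\circ\notin P$ case and the same-$t$ issue explicitly.
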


\begin{proof}
  For the ``if'' direction of the proof, suppose without loss of generality that 
  	$\relint (H^+) \cap \initpointset\onfloor = \emptyset$ 
  and 
  	$H^- \cap \initpointset\onfloor \ne \emptyset$.
  Any point in $\pointset\onfloor$ lying on the SIC is in $\conv(\initpointset\onfloor)$.
  Because $\relint (H^+) \cap \initpointset\onfloor = \emptyset$, it holds that
  	$(\pointset\onfloor \setminus \initpointset\onfloor) \cap \conv(\initpointset\onfloor) = \emptyset$. 
  This implies that any point $p$ in $\mathcal{P}^0 \setminus \mathcal{P}^0_0$ satisfies 
  	$\initcutcoeff^\T p > \initcutRHS$. 
  Recall that $\card{\initpointset\onfloor} + \card{\initpointset\onceil} = n$. 
  Since some point of $\initpointset\onfloor$ lies in $H^-$,
	$\card{\initpointset\onfloor \cap H^+} \le \card{\initpointset\onfloor} - 1$. 
  It follows that at most $n-1$ intersection points from $\initpointset$ remain in $\pointset$.
  This added degree of freedom and the aforementioned depth of points in 
  	$\pointset\onfloor \setminus \initpointset\onfloor$ 
  allows the SIC to be tilted to obtain a GIC that strictly dominates the SIC.
  The ``only if'' direction follows from Theorem~5 in \citet{BalMar13}.
\end{proof}

The above result shows that any single hyperplane is unlikely to 
directly lead to a strictly dominating cut.
Instead of looking for one such hyperplane, in our implementation we focus
on activating a set of hyperplanes that together cut away large parts of $\conv(\initpointset\onfloor)$
and $\conv(\initpointset\onceil)$.
We do this by targeting each of the intersection points in $\initpointset$ one at a time
in step~\ref{step:PHA:tt-start} of Algorithm~\ref{alg:targeted-tilting}.

Although strict dominance is difficult to attain, our next result shows that activating hyperplanes is monotonic in the sense that the lower bound implied by the point-ray collection can only be improved by activating hyperplanes.
This complements Theorem~3 of \citet{BalMar13}, in which it is shown that activating hyperplanes increases the depth of points with respect to the SIC.
This leaves open the question of whether the lower bound on the objective value 
(implied by the points) improves after activating hyperplanes, 
which Proposition~\ref{prop:ptCostMonotonicity} resolves.
Using the notation from Section~\ref{subsec:obj-choices}, we show that when the ray creating the least cost intersection point $\underline{p}^k$
is cut by a hyperplane, the objective value implied by the new intersection points 
is greater than or equal to $\underline{z}$.

\begin{proposition}\label{prop:ptCostMonotonicity}
  Let $r$ be the edge of $C$ that leads to $\underline{p}^k$, 
  i.e., $\underline{p}^k = r \cap \bd \Sk$,
  $H$ be a hyperplane intersected by $r$ before $\bd \Sk$, and
  $\pointset'$ denote the set of intersection points originating at $r \cap H$, 
  obtained by activating $H$.
  Then $\min \{c^\T p \suchthat p \in \pointset'\} \ge \underline{z}$.
\end{proposition}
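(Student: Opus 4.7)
The plan is to identify $\underline{z}$ with the optimal value of the linear program $\min\{c^\T x \suchthat x \in C \cap \bd \Sk\}$ and then observe that activating $H$ can only shrink this feasible region. Concretely, every $p' \in \pointset'$ arises as the intersection with $\bd \Sk$ of a new edge of $C \cap H^+$ emanating from $w := r \cap H$, so $\pointset' \subseteq (C \cap H^+) \cap \bd \Sk \subseteq C \cap \bd \Sk$. It therefore suffices to show that $\min\{c^\T x \suchthat x \in C \cap \bd \Sk\} \ge \underline{z}$.

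First I would verify that this linear program is bounded. Because $\lpopt$ is an optimal basic feasible solution of \eqref{LP}, every nonbasic reduced cost is nonnegative, so $c^\T r_j \ge 0$ for every ray $r_j$ of $\optcone$, giving $c^\T x \ge c^\T \lpopt$ for all $x \in \optcone$. Since $C$ is obtained from $\optcone$ by activating additional valid hyperplanes, $C \subseteq \optcone$ and this lower bound transfers to $C \cap \bd \Sk$. Hence the minimum is attained at some vertex $v^*$ of $C \cap \bd \Sk$.

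Next I would argue that $v^* \in \pointset\splitindex{k}$, so that $c^\T v^* \ge \underline{z}$ by definition of $\underline{z}$. Every vertex of $C \cap \bd \Sk$ is either a vertex of $C$ that happens to lie on $\bd \Sk$ or the transverse crossing of some edge of $C$ with $\bd \Sk$; in either case it is an intersection point and hence is recorded in the proper collection $\pointset\splitindex{k}$. Combined with the trivial upper bound $\min\{c^\T x \suchthat x \in C \cap \bd \Sk\} \le c^\T \underline{p}^k = \underline{z}$ coming from $\underline{p}^k \in C \cap \bd \Sk$, this gives equality, so every $p' \in \pointset'$ satisfies $c^\T p' \ge \underline{z}$.

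The main obstacle will be justifying the inclusion ``every vertex of $C \cap \bd \Sk$ lies in $\pointset\splitindex{k}$'' in the partial-activation setting of Algorithm~\ref{alg:RHA1}, where activations are computed on $\optcone$ rather than on an iteratively refined polyhedron. The required invariant should follow inductively from the algorithm itself: each activation deletes only points strictly violating $H^+$ and records every new intersection point originating at the newly created vertex, so $\pointset\splitindex{k}$ continues to include all vertices of the implicit current relaxation intersected with $\bd \Sk$ throughout the run. Once this bookkeeping step is established, the LP argument sketched above yields the result.
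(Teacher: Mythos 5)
Your proof is correct and follows essentially the same route as the paper's: both arguments reduce the claim to the inclusion $\pointset' \subseteq C \cap \bd{\Sk}$ together with the identification $\underline{z} = \min\{c^\T x \suchthat x \in C \cap \bd{\Sk}\}$ (the paper merely splits $\bd{\Sk}$ into its two facets before making the same observation). You additionally supply the boundedness and vertices-are-intersection-points details that the paper leaves implicit, and the bookkeeping caveat you raise about the partial-activation setting is a gap present in the paper's own one-line identification as well, resolved exactly as you suggest by taking the relevant relaxation to be the one whose edge--$\bd{S}$ intersections reproduce the stored collection.
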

\begin{proof}
  Suppose $\Sk\onfloor$ is the facet of $\Sk$ containing $\underline{p}^k$, 
  and let $\Sk\onceil$ be the opposite facet. 
  We have that 
    $\underline{z} 
    	= \min \{ c^\T x \suchthat x \in C \cap \Sk\onfloor \} 
        \le \min \{ c^\T x \suchthat x \in C \cap \Sk\onceil \}$. 
  Since each of the points $p \in \pointset'$ is either (possibly strictly) in 
  	$C \cap \Sk\onfloor$ 
  or 
  	$C \cap \Sk\onceil$, 
  the result follows.
\end{proof}

\subsection{Characterizing bounded objective functions for \texorpdfstring{\eqref{CutLP}}{(PRLP)}}
We turn to an analysis of \eqref{CutLP}.
It is possible for the optimal solution to \eqref{CutLP} to be unbounded,
a behavior we have in fact observed in our numerical implementation.
To better understand this, in this section we present some structural properties of 
  $
    \AlphaSystem(\bar{\beta}, \pointset, \rayset) 
  $,
the feasible region to \eqref{CutLP} for a fixed right-hand size $\bar{\beta}$,
that characterize the objective function choices leading to unboundedness.

We begin by studying when the system $\AlphaSystem(\bar{\beta},\pointset,\rayset)$ has valid cuts
for a given proper point-ray collection.
Recall that $\mathcal{K}'$ denotes the connected component of the skeleton of $P$ that includes $\lpopt \cap \interior S$, 
and any inequality feasible to \eqref{CutRegion} that cuts a point $v \in \mathcal{K}'$ is valid.
We will consider the system
  \[
  	\cutpolyhedron^\# := 
      \{\alpha \suchthat \alpha \in \AlphaSystem(\bar{\beta},\pointset,\rayset);\ v^\T \alpha < \bar{\beta}\}.
  \]
  
\begin{theorem}\label{thm:validCuts}
Let $(\pointset,\rayset)$ be a proper point-ray collection
and let $v \in \mathcal{K}'$.
The system $\AlphaSystem(\bar{\beta}, \pointset, \rayset)$ has valid cuts as feasible solutions
in the following cases: 
(1) for $\bar{\beta} = 1$ if and only if $0 \not \in \cutpolyhedron$ and
  $v \notin \conv(\pointset) + \cone(\pointset \cup \rayset) = \cutpolyhedron + \cone(\pointset)$,
(2) for $\bar{\beta} = -1$ if and only if $v \not \in \conv(\cutpolyhedron \cup \{0\})$, and 
(3) for $\bar{\beta} = 0$ if and only if $v \not \in \cone(\pointset \cup \rayset)$.
\end{theorem}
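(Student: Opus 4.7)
The plan is to reduce each case to a linear feasibility question and apply LP duality to the program $\min\{v^\T\alpha : \alpha \in \AlphaSystem(\bar\beta,\pointset,\rayset)\}$; a cut separating $v$ with right-hand side $\bar\beta$ exists exactly when this LP is feasible with optimal value strictly below $\bar\beta$. I would handle the three values of $\bar\beta$ in order of difficulty.

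Case (3), $\bar\beta = 0$, is essentially a bipolarity statement: $\AlphaSystem(0,\pointset,\rayset)$ is the polar cone of $K := \cone(\pointset \cup \rayset)$, so every $\alpha$ feasible to the system satisfies $v^\T\alpha \ge 0$ if and only if $v \in K$, which gives the characterization directly.

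For case (1), $\bar\beta = 1$, I would first observe that $\AlphaSystem(1,\pointset,\rayset)$ is empty iff by Farkas there exist $\mu,\lambda \ge 0$ with $\sum_p \mu_p p + \sum_r \lambda_r r = 0$ and $\sum_p \mu_p > 0$; renormalizing, this is exactly $0 \in \cutpolyhedron$. Assuming $0 \notin \cutpolyhedron$, the LP dual is $\max\{\sum_p \mu_p : v = \sum_p \mu_p p + \sum_r \lambda_r r,\ \mu,\lambda \ge 0\}$, and strong duality says no cut exists iff the dual admits a solution with $\sum_p \mu_p \ge 1$. A rescaling argument (write such a representation as one convex combination of $\pointset$ plus a nonnegative conic part with coefficients $\sum_p \mu_p - 1$ and the $\lambda_r$) shows this is equivalent to $v \in \conv(\pointset) + \cone(\pointset \cup \rayset)$, which equals $\cutpolyhedron + \cone(\pointset)$.

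Case (2), $\bar\beta = -1$, is similar: $\alpha = 0$ is always feasible, and the LP dual is $-\min\{\sum_p \mu_p : v = \sum_p \mu_p p + \sum_r \lambda_r r,\ \mu,\lambda \ge 0\}$ with the convention $\inf \emptyset = +\infty$ (which captures the unbounded primal case $v \notin \cone(\pointset \cup \rayset)$, for which a cut trivially exists). Strong duality then gives that no cut exists iff some nonnegative representation of $v$ has $\sum_p \mu_p \le 1$. The geometric identification I need is that this set of $v$ equals $\conv(\cutpolyhedron \cup \{0\})$; the forward direction is a direct rescaling by $t := \sum_p \mu_p \in (0,1]$, while the boundary case $t = 0$ (where $v \in \cone(\rayset)$) is handled by a short limiting argument using the closedness of $\conv(\cutpolyhedron \cup \{0\})$. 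I expect this last polyhedral identification, together with its analogue in case (1), to be the main obstacle; once these set-equalities are in place, the rest is LP duality applied carefully with attention to primal feasibility and boundedness.
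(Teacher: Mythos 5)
Your proposal is correct and follows essentially the same route as the paper: the paper applies the nonhomogeneous Farkas lemma once to characterize nonemptiness of $\cutpolyhedron^\#$ via two alternative systems, and your per-case LP-duality-plus-Farkas argument reproduces exactly those systems (the dual representation $v = Q^\T\lambda + R^\T\mu$ with $e^\T\lambda \ge 1$ or $\le 1$, and the certificate $0 \in \cutpolyhedron$ of primal infeasibility for $\bar{\beta}=1$), together with the same rescaling of $\sum_p \mu_p$ to obtain the geometric descriptions. One caution in case (2): $\conv(\cutpolyhedron \cup \{0\})$ is generally \emph{not} closed when $\rayset \neq \emptyset$, so rather than a limiting argument you should observe directly that the set of $v$ admitting a representation with $\sum_p \mu_p \le 1$ is the (closed) polyhedron $\conv(\conv(\pointset) \cup \{0\}) + \cone(\rayset)$ --- an imprecision the paper's own proof shares when it identifies this set with $\conv(\cutpolyhedron \cup \{0\})$.
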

\begin{proof}
Let $Q$ be the $\card{\pointset} \times n$ matrix containing the intersection points in $\pointset$
as its rows,
and $R$ be the $\card{\rayset} \times n$ matrix with rows comprised of the rays in $\rayset$.
Let ${e}$ denote the $n$-vector of all ones. Using the nonhomogeneous Farkas' lemma~\cite{Mangasarian69}, 
$\cutpolyhedron^\#$ has a feasible solution if and only if the following two systems are infeasible:
  \[
    \begin{matrix}
      \left\{
      \lambda, \mu \ge 0 \suchthat
      \begin{array}{l}
          Q^\T \lambda + R^\T \mu = v \\
          \bar\beta e^\T \lambda \ge \bar\beta
      \end{array}
      \right\}
      & \qquad\qquad &
      \left\{
      \lambda, \mu \ge 0 \suchthat
      \begin{array}{l}
          Q^\T \lambda + R^\T \mu = 0 \\
          \bar\beta e^\T \lambda > 0
      \end{array}
      \right\}
    \end{matrix}
  \]
When $\bar\beta = 1$, the first system is infeasible if and only if $v \notin \cutpolyhedron + \cone(\pointset)$,
and the second system is infeasible if and only if $0 \notin \cutpolyhedron$,
since the existence of a solution $(\lambda,\mu)$ implies $(\lambda/e^\T \lambda, \mu)$ is also feasible.
When $\bar\beta = -1$, the first system is infeasible if and only if 
  $v \notin \conv(\cutpolyhedron \cup \{0\})$,
and the second system is always infeasible, since $\lambda \ge 0$.
When $\bar\beta = 0$, the first system is infeasible if and only if $v \not\in \cone(\pointset \cup \rayset)$,
and the second system is again always infeasible. 
\end{proof}

The feasible region to \eqref{CutLP} is 
	$\AlphaSystem(\bar{\beta},\pointset,\rayset)$, 
not $\cutpolyhedron^\#$.
However, if we assume that $v$ is used as the objective to \eqref{CutLP},
then Theorem~\ref{thm:validCuts} can be used to show when \eqref{CutLP} has a finite solution.
Observe that \eqref{CutLP} implicitly ranks valid inequalities
and picks the most violated cut with respect to $v$.
If there exists a homogeneous inequality valid for $\cutpolyhedron$ that cuts off $v$,
this ranking breaks down,
since all homogeneous inequalities can be scaled to have arbitrarily large violation
and hence are unbounded directions in \eqref{CutLP}.

From the $\bar{\beta} = 0$ case in Theorem~\ref{thm:validCuts},
it follows that the linear program \eqref{CutLP} is bounded
if and only if $v$ belongs to $\cone(\pointset \cup \rayset)$.
Corollary~\ref{cor:beta1cuts} characterizes the two open objective function 
sets within $\cone(\pointset \cup \rayset)$ that admit valid cuts of only one type, 
either with right-hand side $1$ or $-1$.

\begin{corollary}\label{cor:beta1cuts}
The system \eqref{CutLP} has valid inequalities that cut off a point $v$ only for 
\begin{enumerate}
  \item $\bar{\beta} = 1$
    if and only if 
      $0 \notin \cutpolyhedron$ 
    and 
      $v \in \conv(\cutpolyhedron \cup \{0\})\setminus \cutpolyhedron$.
  \item $\bar{\beta} = -1$ 
    if and only if 
      $v \in (\cutpolyhedron + \cone(\pointset)) \setminus \cutpolyhedron$.
\end{enumerate}
\end{corollary}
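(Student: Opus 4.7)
The plan is to derive both cases directly from Theorem~\ref{thm:validCuts}. The statement ``cuts only for $\bar\beta = c$'' means valid inequalities exist at $\bar\beta = c$ but not at either of the other two values in $\{-1,0,1\}$, so the proof reduces to conjoining the relevant characterization from Theorem~\ref{thm:validCuts} with the negations of the other two and simplifying.

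The technical setup I would first record is a parametric description of the four sets in play, all written in terms of a representation $v = \sum_i \lambda_i p_i + \sum_j \mu_j r_j$ with $\lambda,\mu \ge 0$: the set $\cutpolyhedron$ imposes $\sum_i \lambda_i = 1$; the set $\conv(\cutpolyhedron \cup \{0\})$ imposes $\sum_i \lambda_i \le 1$; the set $\cutpolyhedron + \cone(\pointset)$ imposes $\sum_i \lambda_i \ge 1$; and $\cone(\pointset \cup \rayset)$ imposes no constraint on the coefficient sum. The first three sets together cover $\cone(\pointset \cup \rayset)$ and share pairwise intersection equal to $\cutpolyhedron$. The technical linchpin is a convex interpolation observation: if $v$ admits two representations whose coefficient sums lie on opposite sides of $1$, then a suitable convex combination yields a representation with sum exactly $1$, so $v \in \cutpolyhedron$. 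Contrapositively, for $v \in \cone(\pointset \cup \rayset) \setminus \cutpolyhedron$, every representation of $v$ has $\sum_i \lambda_i$ strictly on the same side of $1$.

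For case~1, ``cuts only for $\bar\beta = 1$'' conjoins Theorem~\ref{thm:validCuts}(1) (giving $0 \notin \cutpolyhedron$ and $v \notin \cutpolyhedron + \cone(\pointset)$) with the negation of condition~(2) (giving $v \in \conv(\cutpolyhedron \cup \{0\})$). The interpolation observation collapses the last two into the single condition $v \in \conv(\cutpolyhedron \cup \{0\}) \setminus \cutpolyhedron$, and failure of $\bar\beta = 0$ is automatic from the inclusion $\conv(\cutpolyhedron \cup \{0\}) \subseteq \cone(\pointset \cup \rayset)$. The converse direction just reverses these implications.

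Case~2 is symmetric. ``Cuts only for $\bar\beta = -1$'' supplies $v \notin \conv(\cutpolyhedron \cup \{0\})$ from Theorem~\ref{thm:validCuts}(2) and $v \in \cone(\pointset \cup \rayset)$ from the failure of condition~(3); the interpolation observation then forces every representation of $v$ to have $\sum_i \lambda_i > 1$, placing $v$ in $(\cutpolyhedron + \cone(\pointset)) \setminus \cutpolyhedron$ and automatically invalidating $\bar\beta = 1$ (so no hypothesis about $0 \in \cutpolyhedron$ is needed, which explains the asymmetry between the two stated cases). The converse again reverses these steps. The main obstacle is not really a technical one; it is careful bookkeeping around the boundary $\sum_i \lambda_i = 1$ so that the set-theoretic differences line up with the ``if and only if'' conditions exactly as stated.
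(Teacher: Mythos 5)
Your proposal is correct and follows essentially the same route as the paper: the paper's proof consists of asserting the inclusion $\conv(\cutpolyhedron \cup \{0\}) \subseteq \cone(\pointset \cup \rayset)$ (which kills $\bar\beta = 0$) and the disjointness of $\conv(\cutpolyhedron \cup \{0\}) \setminus \cutpolyhedron$ from $\cutpolyhedron + \cone(\pointset)$, then invoking Theorem~\ref{thm:validCuts} and declaring part~2 ``similar.'' Your coefficient-sum bookkeeping and convex-interpolation observation are exactly the (unstated) justification for that disjointness claim, so you have simply filled in the details the paper leaves implicit.
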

\begin{proof}
Notice that 
  \[
    \conv(\cutpolyhedron \cup \{0\}) \setminus \cutpolyhedron 
      \subseteq \conv(\cutpolyhedron \cup \{0\}) 
      \subseteq \cone(\pointset \cup \rayset)
  \]
and 
  $ 
    \{\conv(\cutpolyhedron \cup \{0\}) \setminus \cutpolyhedron\} 
      \cap 
    \{\cutpolyhedron + \cone(\pointset)\} = \emptyset
  $.
Therefore, the result in part 1 follows from Theorem~\ref{thm:validCuts}. 
The proof of the second part is similar. 
\end{proof}

\section{Tilting for degenerate hyperplanes}
\label{app:tilting}

We previously showed how to tilt a hyperplane $H$ defining $P$ that is not degenerate,
i.e., $\lpopt$ does not lie on $H$.
We defined $H$ using $n$ affinely independent points obtained by intersecting the $n$
affinely independent rays of $\optcone$ with $H$.
These points all lie on one-dimensional faces of $\optcone$.
In the case that $H$ is degenerate, we will instead use two-dimensional faces of $\optcone$
to define the hyperplane, which we will then modify to define a targeted tilting.

When a degenerate hyperplane $H$ is activated on $\optcone$,
each of the extreme rays of the new cone $\optcone \cap H^+$ that lie on $H$
can be defined by $H$ and $n-2$ hyperplanes of $\optcone$ 
that are not redundant for $\optcone \cap H^+$.
Thus, each ray of the new cone lying on $H$ is on a two-dimensional face of $\optcone$.
Let $\rayset^{H*}$ denote an affinely independent set of $n-1$ of these rays lying on $H$.
Let $\optconerays^c$ be the rays of $\optcone$ that are cut by $H^+$.
For each $r \in \rayset^{H*}$, since it lies on a two-dimensional face of $\optcone$, we can define 
  $
    (\bar{r}^1,\bar{r}^2) \in \optconerays^c \times (\optconerays \setminus \optconerays^c)
  $
such that $r$ can be expressed as a convex combination of $\bar{r}^1$ (which is cut by $H$) and $\bar{r}^2$ (which is not cut by $H$);
in particular, let $\lambda^H_r$ be the multiplier such that
  $r = \lambda^H_r \bar{r}^1 + (1-\lambda^H_r) \bar{r}^2$.
Figure~\ref{fig:degenTilting} depicts this construction.
    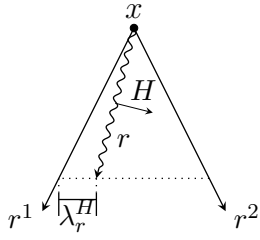
\begin{figure}[htp!]
      \centering
      \begin{tikzpicture}[line join=bevel,x={(1 cm, 0 cm)},y={(0 cm, 1 cm)},z={(\zX cm,\zY cm)},>=stealth,scale=2]
      
      \coordinate (barx) at (1/2, 1);
      \coordinate (pj) at (1,0);
      \coordinate (pk) at (0,0);
      \coordinate (vj) at (3/4,1/2);
      \coordinate (e_start) at (barx);
      \coordinate (e_end) at (1/4,0);
            
      \draw [ray,black] (barx) -- (-1/8,-1/4) node [at end,fill=none,left=-2pt] {\small $r^1$};
      \draw [ray,black] (barx) -- (9/8,-1/4) node [at end,fill=none,right=-2pt] {\small $r^2$};
      \draw [polyhedron_edge,dotted,black] (pj) -- (pk) node [midway,fill=none,above=15pt] {};
      \draw [dotted,-,line width=.5pt,black] (pk) -- ($(pk)+(0,-3/32)$) node [midway,fill=none,above=15pt] {};
      \draw [-,line width=.5pt,black] ($(pk)+(0,-3/32)$) -- ($(pk)+(0,-1/4)$) node [midway,fill=none,above=15pt] {};
      \draw [dotted,-,line width=.5pt,black] (e_end) -- ($(e_end)+(0,-3/32)$) node [midway,fill=none,above=15pt] {};
      \draw [-,line width=.5pt,black] ($(e_end)+(0,-3/32)$) -- ($(e_end)+(0,-1/4)$) node [midway,fill=none,above=15pt] {};
      \draw [-,line width=.5pt,black] ($(pk)+(0,-9/64)$) -- ($(e_end)+(0,-9/64)$) node [midway,fill=none,above=15pt] {};
      \draw [cut line,->,line width=0.5pt,black] (e_start) -- (e_end) node [near end,fill=none,right=0pt] {\small $r$};
      
      \draw [->,line width=0.25pt,black] (3/8, 1/2) -- (5/8,7/16) node [near end,fill=none,above=0pt] {\small $H$};
      
      \node [draw, point, fill=black, label={[label distance=-2pt]90: $\lpopt$}] at (barx) {};
      \node [circle, inner sep=1.5pt, fill=none] at (1/8,-1/4) {\small $\lambda^H_r$};
      \end{tikzpicture}
      \caption{Illustration of targeted tilting construction for a degenerate hyperplane.}
      \label{fig:degenTilting}
    \end{figure} 

If we know $\lambda^H_r$ and the rays $\bar{r}^1$ and $\bar{r}^2$ for every $r \in \rayset^{H^*}$,
then we can use this to give an alternate definition of $H$.
To define a targeted tilting of $H$, we can modify the values $\lambda^H_r$ for each $r \in \rayset^{H^*}$ using some $\delta_r$.
In order to coordinate with the definition of a targeted tilting that we gave in Section~\ref{sec:tilting},
the intersection points and rays created by activating the tilted hyperplane should either be identical to those obtained from activating $H$ or coincide with some initial intersection point or ray.
It is not difficult to see that this means the allowed values for $\delta_r$ are $0$ and $-\lambda^H_r$.
With such a tilting, Theorem~\ref{thm:compute-tilted-points} will still hold, i.e., activations can be performed by using $H$ and $\rayset_A$ without a need for explicitly computing the tilted hyperplane.

\section{Tilting example}
\label{app:PHA}

In this section, we demonstrate an example in which a valid tilting combined with 
the implicit computation used in Theorem~\ref{thm:compute-tilted-points} leads to invalid cuts.
The example additionally provides intuition for the targeted tilting rule that
if a hyperplane cuts a ray of $\optcone$ that has already been cut,
then we should not tilt the hyperplane along that ray.
This is sufficient, as we have shown, to allow us to apply the implicit computation scheme.

The left panel of Figure~\ref{fig:feasReg} shows the feasible region of 
  $P := \{x \in \R^3 \suchthat -2 x_2 + x_3 \le 0; -2 x_1 + x_3 \le 0; 12 x_1 + 10 x_2 - 5 x_3 \le 9; 10 x_1 + 12 x_2 - 5 x_3 \le 9; x_1 + x_2 + x_3 \le 1\}$,
and the right panel of the same figure shows the cone $\optcone$.
The cut-generating set $S$ is the unit box, 
  $\{x \in \R^3: 0 \le x_1 \le 1;\ 0 \le x_2 \le 1\}$.
The hyperplane activations are of $H_4$ and then $H_5$.
We tilt $H_4$ along $r^2$ so that the intersection of this ray with $\tilted{H}_4$
coincides $r^2 \cap \bd{S}$,
and $H_5$ will similarly be tilted along $r^1$ so that the intersection of $r^1$ with $\tilted{H}_5$
is at the point $r^1 \cap \bd{S}$.
The tilted hyperplanes are shown in the top panel of Figure~\ref{fig:hplaneActivation}.

The tilting defined above is clearly valid.
It also satisfies all but one of the conditions of being a targeted tilting;
it does not meet the requirement that $H_5$ and $\tilted{H}_5$ must intersect $r^1$ at the same point,
as a result of the activation of $H_4$ on $r^1$ prior to the activation of $H_5$.

However, as shown in the bottom panel of Figure~\ref{fig:hplaneActivation},
when the implicit computation algorithm is applied to these tilted hyperplanes,
a point of $\conv(P \setminus \interior{S})$ is cut.
This is because $\tilted{H}_5$ intersects $r^1$ outside of the interior of $S$.
Hence, the intersection point $\tilted{H}_5 \cap r^1$ is not added to the point collection, as a result of step~\ref{step:RHA1:is-ray-edge} of Algorithm~\ref{alg:RHA1}.
This intersection point is the same as $H_4 \cap r^1$, which has already been removed from the 
point collection during the activation of $\tilted{H}_4$.

There may be many approaches in which a tilted hyperplane activation can be computed implicitly
using only information from the non-tilted hyperplane.
Our method prevents the situation in this example from occurring by requiring that $r^1$ intersects
$\tilted{H}_5$ at the same point as it intersects $H_5$.
An example of an alternative would be to add the intersection point 
$\tilted{H}_5 \cap r^1 = H_4 \cap r^1$ back into the point collection when activating $\tilted{H}_5$,
but then the intersection points obtained from activating $H_4$ on $r^1$ would be redundant.

\begin{figure}
\centering
  \begin{minipage}{.4\textwidth}
    \begin{tikzpicture}[line join=bevel,x={(1 cm, 0 cm)},y={(0 cm, 2 cm)},z={(-0.5 cm,-0.66 cm)},>=stealth,scale=2] 
      \pgfmathsetmacro{\scaleRays}{1}

     \coordinate (barx) at (1/4,1/2,1/4); 

     \coordinate (p1) at (1,0,0);
     \coordinate (p2) at (0,0,1);
     \coordinate (p3) at (0,0,0);

      \coordinate (H22) at (3/4, 0, 0);
      \coordinate (H21) at (0, 0, 9/10);
      \coordinate (H12) at (9/10, 0, 0);
      \coordinate (H11) at (0, 0, 3/4);

      \coordinate (v22) at (3/4, 1/6, 1/12);
      \coordinate (v21) at (1/28, 1/14, 25/28);
      \coordinate (v12) at (25/28, 1/14, 1/28);
      \coordinate (v11) at (1/12, 1/6, 3/4);
      \coordinate (v3) at (7/16, 1/8, 7/16);

      \coordinate (cutpoint) at (9/22, 0, 9/22);
      





      \draw[->] (xyz cs:x=0) -- (xyz cs:x=1.1) node[label={[label distance=-8pt]45:\normalsize $x_2$}] {};
      \draw[->] (xyz cs:y=-0) -- (xyz cs:y=0.5) node[label={[label distance=-4pt]-135:\normalsize $x_3$}] {};
      \draw[->] (xyz cs:z=0) -- (xyz cs:z=1.1) node[label={[label distance=0pt]0:\normalsize $x_1$}] {};

      \draw [-,line width=1.5pt,black] (H11) -- (v11) -- (barx) node [very near start,fill=none,left=0pt] {};
      \draw [-,line width=1.5pt,black,dashed] (barx) -- (p3) node [very near start,fill=none,left=0pt] {};
      \draw [-,line width=1.5pt,black,dashed] (p3)  -- (H11) node [very near start,fill=none,left=0pt] {};
      \draw [-,line width=1.5pt,black,dashed] (p3)  -- (H22) node [very near start,fill=none,left=0pt] {};
      \draw [-,line width=1.5pt,black] (H11) -- (v11)  -- (v3) -- (cutpoint) -- cycle node [very near start,fill=none,left=0pt] {};
      \draw [-,line width=1.5pt,black] (barx) -- (v11)  -- (v3) -- (v22) -- cycle node [very near start,fill=none,left=0pt] {};
      \draw [-,line width=1.5pt,black] (v3) -- (v22)  -- (H22) -- (cutpoint) -- cycle node [very near start,fill=none,left=0pt] {};

      \tikzstyle{es} = [->,>= stealth,shorten >=2pt,line width=1.5pt]
      \node [draw, circle, inner sep=1.5pt, fill=black, label={[label distance=-2pt]45:\normalsize $\bar x$}] at (barx) {};
      \node [circle, inner sep=1.5pt, fill=none] at (11/32,4/32,3/4) {$H_4$};
      \node [circle, inner sep=1.5pt, fill=none] at (10/16,3/64,0) {$H_5$};
    \end{tikzpicture}
  \end{minipage}
  \begin{minipage}{.4\textwidth} 
    \begin{tikzpicture}[line join=bevel,x={(1 cm, 0 cm)},y={(0 cm, 2 cm)},z={(-0.5 cm,-0.66 cm)},>=stealth,scale=2] 
      \pgfmathsetmacro{\scaleRays}{1}

       \coordinate (barx) at (1/4,1/2,1/4); 

       \coordinate (p1) at (1,0,0);
       \coordinate (p2) at (0,0,1);
       \coordinate (p3) at (0,0,0);

        \coordinate (H22) at (3/4, 0, 0);
        \coordinate (H21) at (0, 0, 9/10);
        \coordinate (H12) at (9/10, 0, 0);
        \coordinate (H11) at (0, 0, 3/4);

        \coordinate (v22) at (3/4, 1/6, 1/12);
        \coordinate (v21) at (1/28, 1/14, 25/28);
        \coordinate (v12) at (25/28, 1/14, 1/28);
        \coordinate (v11) at (1/12, 1/6, 3/4);
        \coordinate (v3) at (7/16, 1/8, 7/16);

        \coordinate (cutpoint) at (9/22, 0, 9/22);





      \draw[->] (xyz cs:x=0) -- (xyz cs:x=1.1) node[label={[label distance=-8pt]45:\normalsize $x_2$}] {};
      \draw[->] (xyz cs:y=-0) -- (xyz cs:y=0.5) node[label={[label distance=-4pt]-135:\normalsize $x_3$}] {};
      \draw[->] (xyz cs:z=0) -- (xyz cs:z=1.1) node[label={[label distance=0pt]0:\normalsize $x_1$}] {};

      \tikzstyle{es} = [->,>= stealth,line width=1.5pt]
      \draw [es] (barx) -- (p1) node[label={[label distance=-4pt,pos=.5]45:\normalsize $r^2$}] {};
      \draw [es] (barx) -- (p2) node[label={[label distance=-4pt,pos=.5]135:\normalsize $r^1$}] {};
      \draw [es] (barx) -- (p3) node[label={[label distance=0pt,pos=.5]0:\normalsize $r^3$}] {};

      \node [draw, circle, inner sep=1.5pt, fill=black, label={[label distance=-2pt]45:\normalsize $\bar x$}] at (barx) {};
    \end{tikzpicture}
  \end{minipage}
\caption{Feasible region of $P$ and $\optcone$.}
\label{fig:feasReg}
\end{figure}

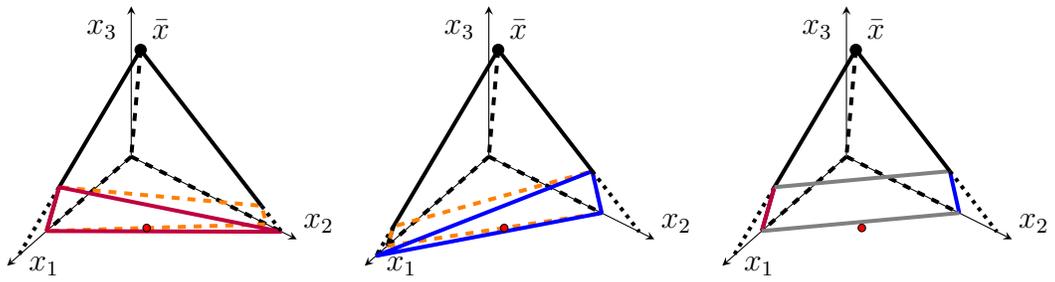
\begin{figure}
\centering
    \begin{tikzpicture}[line join=bevel,x={(1 cm, -0.5 cm)},y={(0 cm, 2 cm)},z={(-0.75 cm,-0.66 cm)},>=stealth,scale=2] 
      \pgfmathsetmacro{\scaleRays}{1}
      \coordinate (barx) at (1/4,1/2,1/4); 

      \coordinate (p1) at (1,0,0);
      \coordinate (p2) at (0,0,1);
      \coordinate (p3) at (0,0,0);

      \coordinate (H22) at (3/4, 0, 0);
      \coordinate (H21) at (0, 0, 9/10);
      \coordinate (H12) at (9/10, 0, 0);
      \coordinate (H11) at (0, 0, 3/4);

      \coordinate (v22) at (3/4, 1/6, 1/12);
      \coordinate (v21) at (1/28, 1/14, 25/28);
      \coordinate (v12) at (25/28, 1/14, 1/28);
      \coordinate (v11) at (1/12, 1/6, 3/4);
      \coordinate (v3) at (7/16, 1/8, 7/16);

      \coordinate (cutpoint) at (9/22, 0, 9/22);





      \draw[->] (xyz cs:x=0) -- (xyz cs:x=1.1) node[label={[label distance=-8pt]45:\normalsize $x_2$}] {};
      \draw[->] (xyz cs:y=-0) -- (xyz cs:y=0.5) node[label={[label distance=-4pt]-135:\normalsize $x_3$}] {};
      \draw[->] (xyz cs:z=0) -- (xyz cs:z=1.1) node[label={[label distance=0pt]0:\normalsize $x_1$}] {};

      \draw [-,line width=1.5pt,black] (H11) -- (v11) -- (barx) node [very near start,fill=none,left=0pt] {};
      \draw [-,line width=1.5pt,black,dashed] (barx) -- (p3) node [very near start,fill=none,left=0pt] {};
      \draw [-,line width=1.5pt,black,dashed] (p3)  -- (H11) node [very near start,fill=none,left=0pt] {};
      \draw [-,line width=1.5pt,black,dashed] (p3)  -- (p1) node [very near start,fill=none,left=0pt] {};
      \draw [-,line width=1.5pt,black,dotted] (v11) -- (p2) node [very near start,fill=none,left=0pt] {};
      \draw [-,line width=1.5pt,orange,dashed] (H11) -- (v11) -- (v12) -- (H12) -- cycle node [very near start,fill=none,left=0pt] {};
      \draw [-,line width=1.5pt,purple] (H11) -- (v11) -- (p1) -- cycle node [very near start,fill=none,left=0pt] {};
      \draw [-,line width=1.5pt,black] (barx) -- (v12) node [very near start,fill=none,left=0pt] {};
      \draw [-,line width=1.5pt,black,dotted] (v12) -- (p1) node [very near start,fill=none,left=0pt] {};
      
      \tikzstyle{es} = [->,>= stealth,shorten >=2pt,line width=1.5pt]
      \node [draw, circle, inner sep=1.5pt, fill=black, label={[label distance=-2pt]45:\normalsize $\bar x$}] at (barx) {};
      \node [draw, circle, inner sep=1pt, fill=red] at (cutpoint) {};
    \end{tikzpicture}
%
    \begin{tikzpicture}[line join=bevel,x={(1 cm, -0.5 cm)},y={(0 cm, 2 cm)},z={(-0.75 cm,-0.66 cm)},>=stealth,scale=2] 
      \pgfmathsetmacro{\scaleRays}{1}

      \coordinate (barx) at (1/4,1/2,1/4); 

      \coordinate (p1) at (1,0,0);
      \coordinate (p2) at (0,0,1);
      \coordinate (p3) at (0,0,0);

      \coordinate (H22) at (3/4, 0, 0);
      \coordinate (H21) at (0, 0, 9/10);
      \coordinate (H12) at (9/10, 0, 0);
      \coordinate (H11) at (0, 0, 3/4);

      \coordinate (v22) at (3/4, 1/6, 1/12);
      \coordinate (v21) at (1/28, 1/14, 25/28);
      \coordinate (v12) at (25/28, 1/14, 1/28);
      \coordinate (v11) at (1/12, 1/6, 3/4);
      \coordinate (v3) at (7/16, 1/8, 7/16);

      \coordinate (cutpoint) at (9/22, 0, 9/22);





      \draw[->] (xyz cs:x=0) -- (xyz cs:x=1.1) node[label={[label distance=-8pt]45:\normalsize $x_2$}] {};
      \draw[->] (xyz cs:y=-0) -- (xyz cs:y=0.5) node[label={[label distance=-4pt]-135:\normalsize $x_3$}] {};
      \draw[->] (xyz cs:z=0) -- (xyz cs:z=1.1) node[label={[label distance=0pt]0:\normalsize $x_1$}] {};

      \draw [-,line width=1.5pt,black] (H21) -- (v21) -- (barx) node [very near start,fill=none,left=0pt] {};
      \draw [-,line width=1.5pt,black,dotted] (v21) -- (p2) node [very near start,fill=none,left=0pt] {};
      \draw [-,line width=1.5pt,black,dashed] (barx) -- (p3) node [very near start,fill=none,left=0pt] {};
      \draw [-,line width=1.5pt,black,dashed] (p3)  -- (p2) node [very near start,fill=none,left=0pt] {};
      \draw [-,line width=1.5pt,black,dashed] (p3)  -- (H22) node [very near start,fill=none,left=0pt] {};
      \draw [-,line width=1.5pt,orange,dashed] (H22) -- (v22) -- (v21) -- (H21) -- cycle node [very near start,fill=none,left=0pt] {};
      \draw [-,line width=1.5pt,blue] (H22) -- (v22) -- (p2) -- cycle node [very near start,fill=none,left=0pt] {};
      \draw [-,line width=1.5pt,black] (barx) -- (v22) node [very near start,fill=none,left=0pt] {};
      \draw [-,line width=1.5pt,black,dotted] (v22) -- (p1) node [very near start,fill=none,left=0pt] {};

      \tikzstyle{es} = [->,>= stealth,shorten >=2pt,line width=1.5pt]
      \node [draw, circle, inner sep=1.5pt, fill=black, label={[label distance=-2pt]45:\normalsize $\bar x$}] at (barx) {};
      \node [draw, circle, inner sep=1pt, fill=red] at (cutpoint) {};
    \end{tikzpicture}
%
    \begin{tikzpicture}[line join=bevel,x={(1 cm, -0.5 cm)},y={(0 cm, 2 cm)},z={(-0.75 cm,-0.66 cm)},>=stealth,scale=2] 
      \pgfmathsetmacro{\scaleRays}{1}

      \coordinate (barx) at (1/4,1/2,1/4); 

      \coordinate (p1) at (1,0,0);
      \coordinate (p2) at (0,0,1);
      \coordinate (p3) at (0,0,0);

      \coordinate (H22) at (3/4, 0, 0);
      \coordinate (H21) at (0, 0, 9/10);
      \coordinate (H12) at (9/10, 0, 0);
      \coordinate (H11) at (0, 0, 3/4);

      \coordinate (v22) at (3/4, 1/6, 1/12);
      \coordinate (v21) at (1/28, 1/14, 25/28);
      \coordinate (v12) at (25/28, 1/14, 1/28);
      \coordinate (v11) at (1/12, 1/6, 3/4);
      \coordinate (v3) at (7/16, 1/8, 7/16);

      \coordinate (cutpoint) at (9/22, 0, 9/22);





      \draw[->] (xyz cs:x=0) -- (xyz cs:x=1.1) node[label={[label distance=-8pt]45:\normalsize $x_2$}] {};
      \draw[->] (xyz cs:y=-0) -- (xyz cs:y=0.5) node[label={[label distance=-4pt]-135:\normalsize $x_3$}] {};
      \draw[->] (xyz cs:z=0) -- (xyz cs:z=1.1) node[label={[label distance=0pt]0:\normalsize $x_1$}] {};
      
      \draw [-,line width=1.5pt,black] (H11) -- (v11) -- (barx) node [very near start,fill=none,left=0pt] {};
      \draw [-,line width=1.5pt,black,dashed] (barx) -- (p3) node [very near start,fill=none,left=0pt] {};
      \draw [-,line width=1.5pt,black,dashed] (p3)  -- (H11) node [very near start,fill=none,left=0pt] {};
      \draw [-,line width=1.5pt,black,dashed] (p3)  -- (H22) node [very near start,fill=none,left=0pt] {};
      \draw [-,line width=1.5pt,black,dotted] (v11) -- (p2) node [very near start,fill=none,left=0pt] {};
      \draw [-,line width=1.5pt,purple] (H11) -- (v11) node [very near start,fill=none,left=0pt] {};
      \draw [-,line width=1.5pt,blue] (H22) -- (v22) node [very near start,fill=none,left=0pt] {};
      \draw [-,line width=1.5pt,black!50] (v22) -- (v11) node [very near start,fill=none,left=0pt] {};
      \draw [-,line width=1.5pt,black!50] (H22) -- (H11) node [very near start,fill=none,left=0pt] {};
      \draw [-,line width=1.5pt,black] (barx) -- (v22) node [very near start,fill=none,left=0pt] {};
      \draw [-,line width=1.5pt,black,dotted] (v22) -- (p1) node [very near start,fill=none,left=0pt] {};
      
      \tikzstyle{es} = [->,>= stealth,shorten >=2pt,line width=1.5pt]
      \node [draw, circle, inner sep=1.5pt, fill=black, label={[label distance=-2pt]45:\normalsize $\bar x$}] at (barx) {};
      \node [draw, circle, inner sep=1pt, fill=red] at (cutpoint) {};
    \end{tikzpicture}
\caption{Hyperplane activations leading to cutting a point in $\conv(P \setminus \interior S)$.}
\label{fig:hplaneActivation}
\end{figure}


%

\end{document}